\newcommand{\R}{\mathbb{R}}
\newcommand{\N}{\mathbb{N}}
\DeclareMathOperator{\diam}{diam}
\DeclareMathOperator{\capop}{cap}
\DeclareMathOperator{\cl}{cl}
\def\restriction#1#2{\mathchoice
              {\setbox1\hbox{${\displaystyle #1}_{\scriptstyle #2}$}
              \restrictionaux{#1}{#2}}
              {\setbox1\hbox{${\textstyle #1}_{\scriptstyle #2}$}
              \restrictionaux{#1}{#2}}
              {\setbox1\hbox{${\scriptstyle #1}_{\scriptscriptstyle #2}$}
              \restrictionaux{#1}{#2}}
              {\setbox1\hbox{${\scriptscriptstyle #1}_{\scriptscriptstyle #2}$}
              \restrictionaux{#1}{#2}}}
\def\restrictionaux#1#2{{#1\,\smash{\vrule height .8\ht1 depth .85\dp1}}_{\,#2}}
\newtheorem{definition}{Definition}[section]\surroundwithmdframed[style=thm]{definition}
\newtheorem{proposition}{Proposition}[section]\surroundwithmdframed[style=thm]{proposition}
\newtheorem{theorem}{Theorem}[section]\surroundwithmdframed[style=thm]{theorem}
\newtheorem{lemma}{Lemma}[section]\surroundwithmdframed[style=thm]{lemma}
\newtheorem{problem}{Problem}[section]\surroundwithmdframed[style=thm]{problem}
\newtheorem*{problem*}{Problem}\surroundwithmdframed[style=thm]{problem*}
\theoremstyle{remark}
\newtheorem*{note}{\textbf{Remark}}
\title{Choosing the Best Interpolation Data in Images with Noise}
\author{
  Zakaria BELHACHMI \\ 
  IRIMAS\\
  Université de Haute-Alsace\\
  Mulhouse, France \\
  \texttt{zakaria.belhachmi@uha.fr} \\
   \And
  Thomas JACUMIN \\
  IRIMAS \\
  Université de Haute-Alsace\\
  Mulhouse, France \\
  \texttt{thomas.jacumin@uha.fr} \\
}
\begin{document}
\maketitle

\begin{abstract}
We introduce and discuss shape based models for finding the best interpolation data in compression of images with noise. The aim is to reconstruct missing regions by means of minimizing data fitting term in the  
$L^2$-norm between the images and their reconstructed counterparts. We analyse the proposed models in the framework of the $\Gamma$-convergence from two different points of view. First, we
consider a continuous stationary PDE model and get pointwise information on the ``relevance'' of each
pixel by a topological asymptotic method. Second, we introduce a finite dimensional setting into
the continuous model based on fat pixels (balls with positive radius), and we study by $\Gamma$-convergence
the asymptotics when the radius vanishes. We extend the method to time-dependent based reconstruction and discuss several strategies for choosing the interpolation data within masks that might be improved over the iterations. Numerical computations are presented that confirm the usefulness of
our theoretical findings for stationary and non-stationary PDE-based image compression.
\end{abstract}

\keywords{image compression \and shape optimization \and $\Gamma$-convergence \and image interpolation \and inpainting \and PDEs \and gaussian noise \and image denoising \and system-aware compression}

\section*{Introduction}

Lossy compression plays a important role in many information systems. Nevertheless, most of these methods do not consider any kind of distortion, called noise, on the source signal to compress. It leads to sub-optimal rate-compression performance. To overcome this issue, Dar and Elad in \cite{sysawarecomp} introduced the concept of ``System-Aware Compression'', Figure \ref{fig:system-aware-compression}, where the compression methods take care of noises involved by system sensors. For our study, we suppose $A$ and $B$ in Figure \ref{fig:system-aware-compression} to be identity matrices. \\

\begin{figure}[!ht]
    \centering
    \includegraphics[width=8cm]{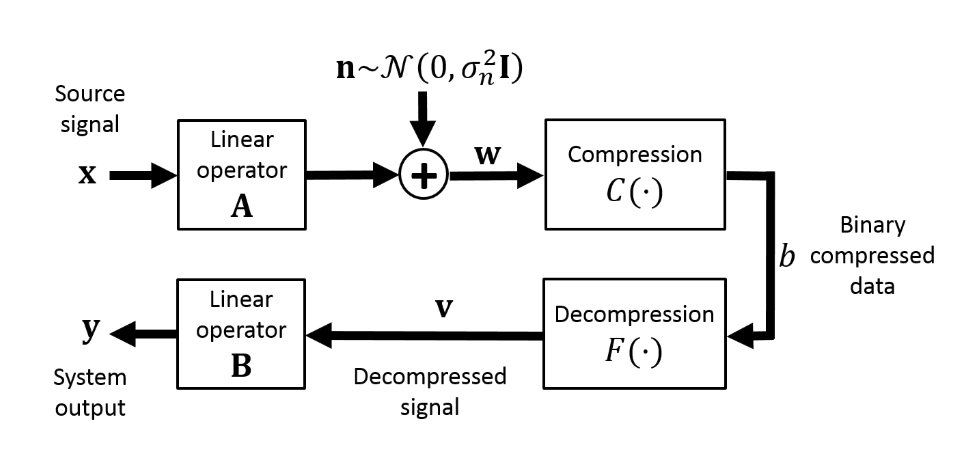}
    \caption{``System-Aware Compression'' description from \cite{sysawarecomp}.}
    \label{fig:system-aware-compression}
\end{figure} 

Using PDEs for image compression is getting more and more interest these past years. However, most of techniques involving PDEs are actually coupled with existing codec such as JPEG \cite{10.5555/574014}, JPEG 2000 \cite{10.5555/2588198} or wavelet transform \cite{BOIX20101265}. Indeed, PDEs are mainly used as pre-filter or post-filter, for example, for image smoothing or denoising \cite{Chan2007, tschumperle:hal-00336540, 999016, 1164922}. 
An interesting idea would be to perform image compression by using a full PDEs-based compression codec \cite{10.1007/11567646_4} by saving a small amount of ``important'' pixels and interpolate the others by suitable PDEs by using image inpainting from a given set of saved parts of the images \cite{10.1145/344779.344972, article1, article2}.
The aim of inpainting is to reconstruct missing parts of the data from known parts, viewed as a Dirichlet boundary condition \cite{ref9, ref10, ref11, 4335633}. \\

Choosing the best interpolation data for image compression, without noise,  have been studied in \cite{ref2} by Belhachmi \textit{et al}, where the aim was to minimize the $H^1$ semi-norm between the original image and its interpolated counterpart. Their work is the only article that proposes a rigorous mathematical analysis to prove the existence of optimal mask and to provide a way to select such  mask. However, because it focuses on edges only, this method gives insufficient results when the input image is affected by noise. Other approaches, mainly stochastic methods have been also proposed \cite{Dell, btree, Schmaltz2014, HoeltgenMHWTSJN15}. Based only on heuristic arguments, these methods do not consider that optimal sets are dependent on the interpolating PDE. In addition, some ideas based on $L^1$ minimization play an important role in recent compressed concepts \cite{1580791}.

In this article, we consider the most important pixels that allows us to minimize the $L^2$-error between the original image and the interpolated one. This point of view leads to pixels choices that reduce the effect of noise, in particular gaussian noise, and as by product, to perform an amount of simultaneous denoising of the considered images. Moreover, we obtain, in the framework of the $\Gamma$-convergence and topological asymptotics, the existence of optimal masks and an effective way to select them. We extend this model to a time-dependent one with two objectifs: first, with a fixed mask, obtained at the initial time, the inpainting yields a denoised image. Second, choosing interpolation data with an adaptive improvement of the mask (which vary in time) allows us to smooth (soften) hard thresholding in the selection and eventually to adjust the size of the mask to a desired accuracy. Numerical experiences, confirm both objectives that is to say an improvement of the inpainting quality and the improvement of the masks during the iterations. \\

In Section \ref{sec:problem_1:continuous_model}, we propose a mathematical model of the problem and its relaxed formulation. In Section \ref{sec:problem_1:topo_grad}, we compute the topological gradient of our minimization problem in order to find a mathematical criterion to construct our set of interpolations points. In Section \ref{sec:problem_1:optimal_distrib}, we change our point of view, by considering ``fat pixels'' instead of a general set of interpolations points. In Section \ref{sec:time-dependent}, we present a second method which is time dependent when the radius vanishes. We extend the method to time-dependent based reconstruction and discuss several strategies for choosing the interpolation data within masks that might be improved over the iterations. Finally, in Section \ref{sec:problem_1:numerical_results}, we expose some numerical results. 

\section{The Continuous Compression Model}

\label{sec:problem_1:continuous_model}

As said previously, we consider that the input signal is affected by gaussian noise. We begin by giving a short review of the gaussian noise model and the ``maximum-a-posteriori estimate''.

\subsection{Review of the Gaussian Noise Model}

\label{sec:gaussian-noise}

 We consider the probability space $(\Omega,\mathcal{B}(\R),\mathbb{P})$. In this section, we assimilate the image $f$ as a random vector. We consider a random vector $G$ such that each $G_i$ follows a normal distribution with $F_i$ as mean and $\sigma$ standard deviation and all $G_i$ are independent. Then, each $G_i$ satisfies 

\[ \mathbb{P}(G_i\ |\ F_i) = \frac{1}{\sigma\sqrt{2\pi}} e^{-\frac{1}{2\sigma^2}(g_i-f_i)^2}. \]

Since every variable is independent, we have 

\[ \mathbb{P}(G\ |\ F) = \prod_{i=0}^N \mathbb{P}(G_i\ |\ F_i) = \frac{1}{\sigma\sqrt{2\pi}} e^{-\frac{1}{2\sigma^2}\sum_{i=0}^N(g_i-f_i)^2} = \frac{1}{\sigma\sqrt{2\pi}} e^{-\frac{1}{2\sigma^2} \|g-f\|_2^2}. \]

For denoising purpose, we generally use the ``maximum-a-posteriori estimate'' \cite{Wetzstein2017EE3, 6737048} of $f=(f_i)_{i=1}^N$ in $\R^N$ for a given $g$ in $\R^N$, that is

\[ \max_{f\in\R^N} \mathbb{P}(F = f\ |\ G = g), \]

or equivalently 

\[ \max_{f\in\R^N} \ln\mathbb{P}(F\ |\ G). \]

By using Bayes formula, we get  

\begin{align*}
    \max_{f\in\R^N} \ln\mathbb{P}(F\ |\ G) &= \max_{f\in\R^N} \ln\frac{\mathbb{P}(F)\mathbb{P}(G\ |\ F)}{\mathbb{P}(G)} \\
     &= \max_{f\in\R^N} \ln\mathbb{P}(F) + \ln\mathbb{P}(G\ |\ F) - \ln\mathbb{P}(G).
\end{align*}

Since $\mathbb{P}(G)$ does not depend on $f$, we obtain

\[ \max_{f\in\R^N} \mathbb{P}(F\ |\ G) \Leftrightarrow \min_{f\in\R^N} \frac{1}{2\sigma^2} \|g-f\|_2^2 + \ln\mathbb{P}(F). \]

We call $\mathbb{P}(F)$ the prior on images. If we consider a uniform prior on images with normalized intensity values, which is $\mathbb{P}(F_i=f_i) = 1$ if $0\leq f_i \leq 1$ and $0$ otherwise, we have that the maximum-a-posteriori estimate of $F$ is equivalent to 

\[ \min_{f\in [0,1]^N} \|g-f\|_2^2. \]

This result shows us that, minimizing the $L^2$-error is equivalent as removing gaussian noise. As a result, we focus our study on the $L^2$-error in the sequel.

\subsection{The Compression Model}

From this point, we want to minimize the $L^2$-error between the noised input signal and the recovered signal. Actually, we do not minimize the $L^2$-error, but a Tikhonov regularized \cite{Tikhonov1977SolutionsOI} $L^2$-error, that we denote by $J$ in the sequel. We begin by giving a mathematical model of the problem. Let $D$ be a smooth bounded open subset of $\R^2$. We study for, $\alpha>0$, the problem \\

\begin{problem} Find $u$ in $H^1(D)$ such that
	\begin{equation}
		\left\{\begin{array}{rl}
			- \alpha \Delta u + u = 0, & \text{in}\ D\setminus K, \\
			u = f, & \text{in}\ K, \\
			\frac{\partial u}{\partial \mathbf{n}} = 0 & \text{on}\ \partial D. \\
		\end{array}\right .\label{eq:problem_1}
	\end{equation}
	\label{pb:problem_1}
\end{problem}

It is well known that if $u_K \in H^1(D)$ is a solution of Problem \ref{pb:problem_1}, then $u_K$ is a minimizer of 

\begin{equation}
	\min\Big\{ \frac{\alpha}{2}\int_D |\nabla u|^2\ dx + \frac{1}{2} \int_D u^2\ dx\ \Big|\ u\in H^1(D),\ u=f\ \text{a.e. on}\ K \Big\}.
	\label{eq:problem_1:laxmilgram}
\end{equation}

The shape optimization problem we study is

\begin{equation}
	\min_{K\subseteq D,m(K)\leq c}\{ J(u_K)\ |\ u_K\ \text{solution of Problem}\ \ref{pb:problem_1} \},
	\label{pb:problem_1_opt_no_constraint}
\end{equation}

where $J$, defined by
\begin{equation} \label{eq:alpha_error}
	J(u) = \frac{1}{2}\int_D (u-f)^2\ dx + \frac{\alpha}{2}\int_D |\nabla u-\nabla f|^2\ dx,
\end{equation} is called the cost functional, $m$ is a given measure and $c>0$. In fact, the cost functional we choose to study is the $L^2$-norm with a regularization term characterized by $\alpha$.

\subsubsection{Reformulation}

In this section, we want to give a $\max$-$\min$ formulation of the optimization problem \eqref{pb:problem_1_opt_no_constraint}. This new formulation will be more convenient to use later. We have the following proposition : \\

\begin{proposition}
	\label{prop:problem_1_opt_no_constraint:reformulation}
	The optimization problem \eqref{pb:problem_1_opt_no_constraint} is equivalent to 
	\[ \max_{K\subseteq D,m(K)\leq c} \min_{u\in H^1(D), u=f\ \text{in}\ K} \frac{\alpha}{2}\int_D |\nabla u|^2\ dx + \frac{1}{2} \int_D u^2\ dx. \]
\end{proposition}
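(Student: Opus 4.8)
The plan is to show that, for every admissible $K$, the cost $J(u_K)$ differs from the Dirichlet‑type energy
\[ E_K := \min\Big\{ \tfrac{\alpha}{2}\textstyle\int_D|\nabla u|^2\,dx + \tfrac12\int_D u^2\,dx \;\Big|\; u\in H^1(D),\ u=f\ \text{a.e. on}\ K \Big\} \]
only by a change of sign and an additive constant \emph{independent of $K$}. Granting this, minimizing $J(u_K)$ over $K$ with $m(K)\le c$ is the same as maximizing $E_K$ over the same class, which is exactly the $\max$-$\min$ expression in the statement; so the two optimization problems have the same optimizers (and opposite, affinely related, optimal values).

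First I would record the variational characterization already quoted in \eqref{eq:problem_1:laxmilgram}: $u_K$ is the minimizer of $E(u):=\tfrac{\alpha}{2}\int_D|\nabla u|^2\,dx+\tfrac12\int_D u^2\,dx$ over the closed affine set $V_K:=\{u\in H^1(D):u=f\ \text{a.e. on}\ K\}$, hence $E_K=E(u_K)$, and $u_K$ satisfies the associated weak (Euler–Lagrange) identity
\[ \alpha\int_D\nabla u_K\cdot\nabla v\,dx+\int_D u_K v\,dx=0\qquad\text{for all } v\in H^1(D)\text{ with }v=0\text{ on }K, \]
the Neumann condition on $\partial D$ being the natural boundary condition.

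Next I would expand the square in \eqref{eq:alpha_error}:
\[ J(u_K)=\underbrace{\tfrac12\!\int_D u_K^2+\tfrac{\alpha}{2}\!\int_D|\nabla u_K|^2}_{=E(u_K)}-\Big(\int_D u_K f+\alpha\!\int_D\nabla u_K\cdot\nabla f\Big)+\underbrace{\tfrac12\!\int_D f^2+\tfrac{\alpha}{2}\!\int_D|\nabla f|^2}_{=:C}. \]
The key step is to evaluate the middle bracket. Writing $f=u_K+(f-u_K)$ and observing that $f-u_K\in H^1(D)$ vanishes a.e. on $K$, I can use $v=f-u_K$ as a test function in the weak identity above; this cancels the $(f-u_K)$ contribution and leaves $\int_D u_K f+\alpha\int_D\nabla u_K\cdot\nabla f=\int_D u_K^2+\alpha\int_D|\nabla u_K|^2=2E(u_K)$. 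Substituting back yields $J(u_K)=E(u_K)-2E(u_K)+C=C-E_K$.

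Finally, since $C$ does not depend on $K$, we get $\min_{K}J(u_K)=C-\max_K E_K$, and the minimizers of \eqref{pb:problem_1_opt_no_constraint} are precisely the maximizers of $\max_{K}\min_{u\in V_K}E(u)$, which is the asserted reformulation. The one point needing a little care — the only (mild) obstacle — is the legitimacy of the test function $v=f-u_K$: this requires $f\in H^1(D)$ (implicit already in the definition \eqref{eq:alpha_error} of $J$) and a precise reading of the constraint ``$u=f$ on $K$'' (a.e., or quasi-everywhere in the capacitary relaxation used later) so that $f-u_K$ genuinely lies in the space of admissible variations for \eqref{eq:problem_1:laxmilgram}. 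Everything else is bookkeeping.
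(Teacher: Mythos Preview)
Your argument is correct and follows essentially the same route as the paper's proof: expand $J(u_K)$, use the weak formulation of Problem~\ref{pb:problem_1} tested against $v=f-u_K$ (which vanishes on $K$) to identify the cross term with $2E(u_K)$, and conclude that $J(u_K)$ equals a $K$-independent constant minus $E_K$. The only cosmetic differences are that you track the constant $C$ explicitly (the paper drops it silently) and you flag the regularity/constraint caveat on the admissibility of $f-u_K$, which the paper leaves implicit.
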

\begin{proof}
	Let $u_K$ be a solution of Problem \ref{pb:problem_1}. Thus,
	\begin{align*}
		\eqref{pb:problem_1_opt_no_constraint} & \Leftrightarrow \min_{K\subseteq D} \frac{1}{2}\int_D (u_K-f)^2\ dx + \frac{\alpha}{2}\int_D |\nabla u_K-\nabla f|^2\ dx \\
		& \Leftrightarrow \min_{K\subseteq D} \frac{1}{2}\int_D u_K^2\ dx + \frac{\alpha}{2}\int_D |\nabla u_K|^2\ dx - \int_D u_K f\ dx - \alpha\int_D \nabla u_K\cdot\nabla f\ dx.
	\end{align*}

	The weak formulation of Problem \ref{pb:problem_1} gives us $\alpha\int_D \nabla u_K\cdot\nabla f\ dx = \int_D u_K^2\ dx + \alpha\int_D |\nabla u_K|^2\ dx - \int_D u_Kf\ dx$. Hence,

	\begin{align*}
		\eqref{pb:problem_1_opt_no_constraint} & \Leftrightarrow \min_{K\subseteq D} \frac{1}{2}\int_D u_K^2\ dx + \frac{\alpha}{2}\int_D |\nabla u_K|^2\ dx - \int_D u_Kf\ dx - \int_D u_K^2\ dx - \alpha\int_D |\nabla u_K|^2\ dx + \int_D u_Kf\ dx \\
		& \Leftrightarrow \min_{K\subseteq D} -\frac{1}{2}\int_D u_K^2\ dx - \frac{\alpha}{2}\int_D |\nabla u_K|^2\ dx \\
		& \Leftrightarrow \max_{K\subseteq D} \frac{\alpha}{2}\int_D |\nabla u_K|^2\ dx + \frac{1}{2}\int_D u_K^2\ dx.
	\end{align*}

	Using \eqref{eq:problem_1:laxmilgram}, we get the result.
\end{proof}

\begin{note}
	The new formulation of our optimization problem \eqref{pb:problem_1_opt_no_constraint} can be rewritten by penalizing the measure constraint on $K$ as follow :

	\begin{equation}
		\label{pb:problem_1_opt_penalized}
		\max_{K\subseteq D} \min_{u\in H^1(D), u=f\ \text{in}\ K} \frac{\alpha}{2}\int_D |\nabla u|^2\ dx + \frac{1}{2} \int_D u^2\ dx - \beta m(K),
	\end{equation}

	for $\beta >0$.
\end{note}


The well-posedness of \eqref{pb:problem_1_opt_no_constraint} depends of the choice of the measure $m$. In \cite{ref2}, it has been proven that, in the Laplacian case, choosing the $\nu$-capacity as measure $m$ leads to the well-posedness of the optimization problem. Consequently, we will study \eqref{pb:problem_1_opt_no_constraint} when $m$ is the $\nu$-capacity. 

\subsubsection{Framework of the \texorpdfstring{$\gamma$-}-convergence}

For the sake of completeness we recall the definition of the $\nu$-capacity, for $\nu>0$, $\Gamma$-convergence and $\gamma$-convergence written in \cite{ref2}. More details about the $\nu$-capacity or the shape optimization tools can be found in \cite{ref12, ref1}. Let us start with some definitions. \\

\begin{definition}[$\nu$-capacity of a set]
	Let $V\subseteq\R^d$ be a smooth bounded open set and $\nu>0$. We define the $\nu$-capacity of a subset $E$ in $V$ by
	\[ \capop_\nu(E,V) = \inf\Big\{ \int_V |\nabla u|^2\ dx + \nu \int_V u^2\ dx\ \Big|\ u\in H^1_0(V),\ u\geq 1 \text{ a.e. in } E \Big\}. \]
\end{definition}
\begin{note}
	We notice that if, for a given set $E\subseteq V$ and $\nu>0$, we have $\capop_\nu(E,V) = 0$, then we have $\capop_\nu(E,V) = 0$ for every $\nu >0$. Thus, the sets of vanishing capacity are the same for all $\nu >0$. That is why we will drop the $\nu$ and simply write $\capop$ instead of $\capop_\nu$.
\end{note} \vspace{0.2cm}

\begin{definition}[quasi-everywhere property]
	We say that property holds \textbf{quasi-everywhere} if it holds for all $x$ in $E$ except for the elements of a set $Z$ subset of $E$ such that $\capop(Z,E)=0$. We write q.e.
\end{definition}

\begin{definition}[quasi-open set]
	We say that a subset $A$ of $E$ is \textbf{quasi-open} if for every $\varepsilon >0$ there exists an open subset $A_\varepsilon$ of $D$, such that $A\subseteq A_\varepsilon$ and $\capop(A_\varepsilon\setminus A, D)<\varepsilon$.
\end{definition}

We introduce the set $\mathcal{M}_0(D)$ which is denoted by  $\mathcal{M}_0^*(D)$ in \cite{ASNSP_1987_4_14_3_423_0} \\

\begin{definition}[The set $\mathcal{M}_0(D)$]
	We denote by $\mathcal{M}_0(D)$ the set of all non negative Borel measures $\mu$ on $D$, such that

	\begin{itemize}
		\item $\mu(B)=0$, for every Borel set $B$ subset of $D$ with $\capop(B,D) = 0$,
		\item $\mu(B)=\inf\big\{\mu(U)\ \big|\ U\ \text{quasi-open},\ B\subseteq U\big\}$, for every Borel subset $B$ of $D$.
	\end{itemize}
\end{definition}

\begin{definition}[$\nu$-capacity of a measure]
	The $\nu$-capacity, for $\nu>0$, of a measure $\mu$ of $\mathcal{M}_0(D)$ is defined by

	\[ \capop_\nu(\mu, D) := \inf_{u\in H^1_0(D)} \int_D |\nabla u|^2\ dx + \nu\int_D u^2\ dx + \int_D (u-1)^2\ d\mu. \]
\end{definition}

The next proposition will gives us a natural way to identify a set to a measure of $\mathcal{M}_0(D)$. \\

\begin{proposition} Let $E$ be a Borel subset of $D$. We denote by $\infty_E$ the measure of $\mathcal{M}_0(D)$ defined by

	\[ \infty_E(B) := \begin{cases}
		+\infty &,\ \text{if}\ \capop(B,D)>0,\\
		0 &, \ \text{otherwise}.
	\end{cases},\ \text{for all}\ B\ \text{Borel subset of}\ D. \]
\end{proposition}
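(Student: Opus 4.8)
The plan is to verify, in turn, the two bullets in the definition of $\mathcal M_0(D)$ for $\infty_E$, after noting that $\infty_E$ is a non-negative Borel measure. That $\infty_E\geq 0$ and $\infty_E(\emptyset)=0$ is clear, and $\sigma$-additivity uses only that the polar sets (those of zero capacity) are stable under countable unions: for pairwise disjoint Borel sets $(B_n)_n$ with union $B$, either $\capop(B\cap E,D)=0$, whence $\capop(B_n\cap E,D)=0$ for every $n$ by monotonicity and $\infty_E(B)=\sum_n\infty_E(B_n)$ reads $0=0$; or $\capop(B\cap E,D)>0$, and then, a countable union of polar sets being polar, some $\capop(B_{n_0}\cap E,D)>0$, so both sides equal $+\infty$. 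The first bullet is then immediate: if $\capop(B,D)=0$ then $\capop(B\cap E,D)=0$ by monotonicity, hence $\infty_E(B)=0$.

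The core of the argument is the outer regularity $\infty_E(B)=\inf\{\infty_E(U)\mid U\text{ quasi-open},\ B\subseteq U\}$, which I would split according to the two values that $\infty_E$ can take. If $\capop(B\cap E,D)>0$, then every quasi-open $U\supseteq B$ satisfies $\capop(U\cap E,D)\geq\capop(B\cap E,D)>0$, so $\infty_E(U)=+\infty$ and the infimum equals $+\infty=\infty_E(B)$. If $\capop(B\cap E,D)=0$, it suffices to exhibit one quasi-open set $U$ with $B\subseteq U$ and $\capop(U\cap E,D)=0$; then $\infty_E(U)=0=\infty_E(B)$ and the infimum, being nonnegative, is attained there. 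To build $U$ I would invoke three standard facts: a polar set is itself quasi-open (cover it by open sets of arbitrarily small $\nu$-capacity, using outer regularity of the capacity); a finite union of quasi-open sets is quasi-open; and — the only place the nature of $E$ is used — that $E$ differs by a polar set from a quasi-closed set $\widehat E\supseteq E$, which one may take equal to $E$ when $E$ is closed, the relevant case for the masks of the sequel. Since $D\setminus\widehat E$ is then quasi-open and $B\cap\widehat E=(B\cap E)\cup\bigl(B\cap(\widehat E\setminus E)\bigr)$ is polar, the set $U:=(D\setminus\widehat E)\cup(B\cap\widehat E)$ is quasi-open, contains $B$, and satisfies $U\cap E=B\cap E$, so $\capop(U\cap E,D)=0$.

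The step I expect to be the main obstacle is precisely this construction of $U$: everything reduces to finding a quasi-open superset of $B$ meeting $E$ only in a polar set, and this forces one to replace $E$ by a quasi-closed hull of it that is polar-close to $E$ — harmless for closed $E$, but requiring care in general. The remaining ingredients — monotonicity, finite subadditivity and outer regularity of the $\nu$-capacity, stability of polarity under countable unions, and stability of quasi-openness under finite unions and under symmetric differences with polar sets — are classical; they, and the identification of a set with a measure of $\mathcal M_0(D)$ itself, can be found in \cite{ASNSP_1987_4_14_3_423_0, ref12, ref1}. Once they are granted, the proof is pure bookkeeping.
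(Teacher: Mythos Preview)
The paper offers no proof of this proposition; it is stated as a definition-cum-fact, deferring implicitly to the references \cite{ASNSP_1987_4_14_3_423_0, ref12, ref1} already cited for the class $\mathcal M_0(D)$. So there is nothing on the paper's side to compare your argument against.

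Your verification is correct and, in fact, more careful than the paper. Two observations. First, you have tacitly repaired a typo: as printed, the case distinction reads $\capop(B,D)>0$, which does not involve $E$ at all; the intended condition --- and the one you argue for --- is $\capop(B\cap E,D)>0$. Second, the step you flag as the main obstacle is genuinely one, not excess caution: testing outer regularity at $B=D\setminus E$ shows that $\infty_E\in\mathcal M_0(D)$ \emph{forces} $E$ to agree with a quasi-closed set up to a polar set, so the hypothesis ``Borel'' in the statement is, strictly speaking, too weak. Your construction $U=(D\setminus\widehat E)\cup(B\cap\widehat E)$ is exactly the right one once a quasi-closed hull $\widehat E$ is available, and since every occurrence of $\infty_E$ in the paper has $E$ closed (the masks $K$) or equal to $D\setminus A$ with $A$ open, your restriction is harmless for the applications and your proof is complete in those cases.
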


\begin{note}
	For a given Borel subset $E$ of $D$, we have $\capop_\nu(\infty_E,D) = \capop_\nu(E,D)$.
\end{note}\vspace{0.2cm}

\begin{definition}[$\Gamma$-convergence] 
	Let $V$ be a topological space. We say that the sequence of functionals $(F_n)_n$, from $V$ into $\bar{\R}$, $\Gamma$-converges to $F$ in $V$ if

	\begin{itemize}
		\item for every $u$ in $V$, there exists a sequence $(u_n)_n$ in $V$ such that $u_n\to u$ in $V$ and $F(u)\geq \limsup_{n\to+\infty} F_n(u_n)$,
		\item for every sequence $(u_n)_n$ in $V$ such that $u_n\to u$ in $V$, we have $F(u) \leq \liminf_{n\to +\infty} F_n(u_n)$.
	\end{itemize}	

	We write sometimes $\Gamma-\lim_{n\to+\infty} F_n = F$.
\end{definition}

\begin{definition}[$\gamma$-convergence]
	We say that a sequence $(\mu_n)_n$ of measures in $\mathcal{M}_0(D)$ $\gamma$-converge to a measure $\mu$ in $\mathcal{M}_0(D)$ with respect to $F$ (or $(\mu_n)_n$ $\gamma(F)$-converge to $\mu$) if $F_{\mu_n}$ $\Gamma$-converge in $L^2(D)$ to $F_\mu$.
\end{definition}

We give a locality of the $\gamma$-convergence result, then the $\gamma$-compacity of $\mathcal{M}_0(D)$, from \cite{ASNSP_1997_4_24_2_239_0} and \cite{ASNSP_1987_4_14_3_423_0} respectively. \\

\begin{proposition}[Locality of the $\gamma$-convergence]
	\label{prop:problem_1:gamma_locality}
	Let $(\mu^1_n)_n$ and $(\mu^2_n)_n$ be two sequences of measures in $\mathcal{M}_0(D)$ which $\gamma(F)$-converge to $\mu^1$ and $\mu^2$ respectively. Assume that $\mu^1_n$ and $\mu^2_n$ coincide q.e. on a subset $D'$ of $D$, for every $n\in\N$. Then $\mu^1$ and $\mu^2$ coincide q.e. on $D'$.
\end{proposition}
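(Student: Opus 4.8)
The plan is to exploit the fact that a measure in $\mathcal{M}_0(D)$ does not charge sets of zero capacity, so the assertion that $\mu^1$ and $\mu^2$ \emph{coincide q.e.\ on $D'$} simply means $\mu^1(B)=\mu^2(B)$ for every Borel set $B\subseteq D'$, i.e.\ that the restrictions of $\mu^1$ and of $\mu^2$ to $D'$ are equal. To pass from the given equality of the \emph{sequences} on $D'$ to an equality of the two $\gamma$-limits on $D'$, I would introduce the auxiliary measures
\[
\nu^i_n:=\mu^i_n+\infty_{D\setminus D'}\ \in\ \mathcal{M}_0(D),\qquad i=1,2 .
\]
Because $\mu^1_n$ and $\mu^2_n$ agree on every Borel subset of $D'$ while the summand $\infty_{D\setminus D'}$ is the same on both sides, a direct verification against the definitions of $\mathcal{M}_0(D)$ and of $\infty_E$ shows that $\nu^1_n=\nu^2_n$ as measures, for every $n\in\N$; in particular the associated functionals coincide, $F_{\nu^1_n}=F_{\nu^2_n}$.

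The heart of the matter is then the claim that adding the \emph{fixed} measure $\infty_{D\setminus D'}$ is compatible with the $\gamma$-limit, namely
\[
\mu^i_n\ \xrightarrow{\ \gamma\ }\ \mu^i\qquad\Longrightarrow\qquad
\nu^i_n=\mu^i_n+\infty_{D\setminus D'}\ \xrightarrow{\ \gamma\ }\ \mu^i+\infty_{D\setminus D'},\qquad i=1,2 .
\]
Granting this, the uniqueness of the $\Gamma$-limit of the common sequence $(F_{\nu^1_n})_n=(F_{\nu^2_n})_n$ in $L^2(D)$, together with the injectivity of the correspondence $\mu\mapsto F_\mu$ on $\mathcal{M}_0(D)$, forces $\mu^1+\infty_{D\setminus D'}=\mu^2+\infty_{D\setminus D'}$. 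Restricting this equality to Borel subsets of $D'$, on which $\infty_{D\setminus D'}$ vanishes, gives $\mu^1(B)=\mu^2(B)$ for all Borel $B\subseteq D'$, which is the asserted locality.

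It remains to prove the claim, and this is the step I expect to be the main obstacle: adding $\infty_{D\setminus D'}$ amounts to imposing a capacitary Dirichlet-type side condition on $D\setminus D'$, which is \emph{not} a continuous, nor even lower semicontinuous, perturbation in the $L^2(D)$-topology, so it cannot be added term by term to the $\Gamma$-limit. The $\Gamma$-$\liminf$ inequality for $(F_{\nu^i_n})_n$ is nevertheless routine: along a sequence $u_n\to u$ in $L^2(D)$ with $\sup_n F_{\nu^i_n}(u_n)<\infty$ one gets a bound on $u_n$ in $H^1(D)$ and the side condition for each $u_n$, hence weak $H^1$-convergence to $u$ and, the relevant constraint set being weakly closed, the side condition for $u$; the $\Gamma$-$\liminf$ inequality for $(F_{\mu^i_n})_n$ then yields $\liminf_n F_{\nu^i_n}(u_n)\ge F_{\mu^i+\infty_{D\setminus D'}}(u)$. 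The $\Gamma$-$\limsup$ inequality is the delicate one: given an admissible target $u$, I would take a recovery sequence $\tilde u_n$ for $F_{\mu^i_n}\xrightarrow{\Gamma}F_{\mu^i}$ and correct it by gluing $\tilde u_n$ to $u$ across a thin collar inside $D'$ by means of capacitary cut-off functions subordinate to $D\setminus D'$, estimating the gluing error through the capacity of the collar and then letting the collar shrink along a diagonal subsequence. This capacitary localisation is exactly the kind of estimate provided in \cite{ASNSP_1997_4_24_2_239_0}; everything else in the argument reduces to bookkeeping with the definitions.
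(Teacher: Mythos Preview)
The paper does not prove this proposition; it is stated without proof and attributed to \cite{ASNSP_1997_4_24_2_239_0}. So there is no ``paper's own proof'' to compare against, and what you have written is a genuine attempted reconstruction.

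Your overall strategy --- freeze the behaviour outside $D'$ by adding $\infty_{D\setminus D'}$ to both sequences, observe that the two resulting sequences coincide, pass to the $\gamma$-limit, and read off equality on $D'$ from injectivity of $\mu\mapsto F_\mu$ --- is a reasonable shape for the argument and is close in spirit to how locality is handled in the relaxed-Dirichlet literature. Two points, however, deserve flagging. First, the implication ``$\mu_n\xrightarrow{\gamma}\mu$ $\Rightarrow$ $\mu_n+\infty_{D\setminus D'}\xrightarrow{\gamma}\mu+\infty_{D\setminus D'}$'' is not a formality: it is itself a localisation statement, and for an arbitrary Borel $D'$ (as opposed to a quasi-open one) it is precisely where the difficulty sits. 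You correctly single out the $\Gamma$-$\limsup$ as the delicate half, but your gluing sketch has a gap: the cross term $\int_D |\nabla\varphi|^2\,|f-\tilde u_n|^2\,dx$ does \emph{not} vanish merely by shrinking the collar, because on the collar (which lies inside $D'$) the target $u$ need not equal $f$, so $|f-\tilde u_n|^2$ has no reason to be small there. One needs a De~Giorgi-type averaging over a family of concentric collars (the ``joining lemma'') to select a level where the energy is controlled. That machinery is exactly what \cite{ASNSP_1997_4_24_2_239_0} supplies, so your deferral to that reference is legitimate --- but it means your argument ultimately rests on the same source the paper cites rather than being self-contained. Second, the injectivity of $\mu\mapsto F_\mu$ on $\mathcal{M}_0(D)$ that you invoke at the end is true but is itself a nontrivial result from the Dal~Maso--Mosco theory, not something immediate from the definitions in the paper.
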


\begin{proposition}[$\gamma$-compacity of $\mathcal{M}_0(D)$]
	\label{prop:problem_1:M0:gamma_compacity}
	The set $\mathcal{M}_0(D)$ is compact with respect to the $\gamma$-convergence. Moreover, the class of measures of the form $\infty_{D\setminus A}$, with $A$ open and smooth subset of $D$, is dense in $\mathcal{M}_0(D)$.
\end{proposition}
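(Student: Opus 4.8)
The plan is to follow the classical argument of Dal Maso and Mosco, organised around the torsion functions of the measures. To each $\mu\in\mathcal{M}_0(D)$ I would associate $w_\mu\in H^1_0(D)$, the weak solution of the relaxed Dirichlet problem $-\Delta w_\mu+\mu\,w_\mu=1$ in $D$, $w_\mu\in H^1_0(D)\cap L^2_\mu(D)$; equivalently $w_\mu$ is the minimiser over $H^1_0(D)$ of $u\mapsto\int_D|\nabla u|^2\,dx+\int_D u^2\,d\mu-2\int_D u\,dx$, equivalently $w_\mu=R_\mu 1$ where $R_\mu\colon L^2(D)\to L^2(D)$ sends $f$ to the solution of $-\Delta v+\mu v=f$. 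The first step is the reduction lemma: $(\mu_n)_n$ $\gamma$-converges to $\mu$ if and only if $w_{\mu_n}\to w_\mu$ strongly in $L^2(D)$, and the map $\mu\mapsto w_\mu$ is injective on $\mathcal{M}_0(D)$ (its inverse being the reconstruction formula described below). This is the standard identification of $\Gamma$-limits of equi-coercive quadratic forms with the convergence of their resolvents $R_\mu$, each of which is positive, self-adjoint and dominated by the compact operator $R_0$ (the resolvent at $\mu=0$). Granting this, $d(\mu,\nu):=\|w_\mu-w_\nu\|_{L^2(D)}$ metrises $\gamma$-convergence, so it suffices to establish sequential compactness and sequential density.

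For compactness I would take a sequence $(\mu_n)_n\subseteq\mathcal{M}_0(D)$, test the equation for $w_{\mu_n}$ against $w_{\mu_n}$, and use the Poincar\'e inequality on $H^1_0(D)$ to get a bound $\|w_{\mu_n}\|_{H^1_0(D)}\le C$ uniform in $n$ (the weak maximum principle also yields $0\le w_{\mu_n}\le w_0$). Extract a subsequence with $w_{\mu_n}\rightharpoonup w$ in $H^1_0(D)$ and $w_{\mu_n}\to w$ in $L^2(D)$, and, along a further subsequence, let the functionals entering the definition of $\gamma$-convergence $\Gamma$-converge in $L^2(D)$ to some functional $F$ — possible since these functionals are equi-coercive on the separable space $L^2(D)$. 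The principal difficulty, and the real content of the theorem, is the representation step: one must show $F(u)=\int_D|\nabla u|^2\,dx+\int_D u^2\,d\mu$ for a (necessarily unique) $\mu\in\mathcal{M}_0(D)$, whose torsion function is exactly the limit $w$. The measure is reconstructed from $w$: the set $\{w>0\}$ is quasi-open, $1+\Delta w$ is a nonnegative measure there, $\mu$ equals $(1+\Delta w)/w$ on $\{w>0\}$, and $\mu$ charges the quasi-closed remainder $Z:=D\setminus\{w>0\}$ like $\infty_{Z}$ (i.e.\ $\mu(B)=+\infty$ whenever $B\subseteq Z$ has positive capacity); one then checks $\mu\in\mathcal{M}_0(D)$ and $w_\mu=w$, and the reduction lemma gives $\mu_n\xrightarrow{\gamma}\mu$ along the subsequence. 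Carrying this out needs the localisation of $\Gamma$-limits (in the spirit of Proposition~\ref{prop:problem_1:gamma_locality}), a De Giorgi--Letta type argument to see the $\Gamma$-limit as acting like a measure in the space variable, and capacitary fine analysis near $Z$; this is also what supplies the reconstruction map used for injectivity above.

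For the density statement I would show that $\mathcal{M}_0(D)$ is the $\gamma$-closure of $\mathcal{A}:=\{\infty_{D\setminus A}\ |\ A\subseteq D\ \text{open and smooth}\}$, in two parts. First, for $A\subseteq D$ open, exhaust $A$ from inside by smooth open sets $A_k\uparrow A$ with $\capop(A\setminus A_k,D)\to 0$; then the constraint sets increase and $w_{\infty_{D\setminus A_k}}\uparrow w_{\infty_{D\setminus A}}$ in $L^2(D)$ (monotone $\gamma$-convergence for increasing domains), so $\infty_{D\setminus A_k}\to\infty_{D\setminus A}$ and $\mathcal{A}$ is $\gamma$-dense in $\{\infty_{D\setminus A}\ |\ A\ \text{open}\}$. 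Second — the technical core of the density half — every $\mu\in\mathcal{M}_0(D)$ is a $\gamma$-limit of measures $\infty_{D\setminus A_n}$ with $A_n$ open: this is a Cioranescu--Murat / Dal Maso--Garroni perforation construction, in which $D\setminus A_n$ is a finite union of small balls whose number, centres and radii — at the critical capacitary scale, which in dimension two is exponentially small — are tuned to encode the local capacitary behaviour of $\mu$, and an energy comparison shows that the Dirichlet problems on the perforated domains converge to the relaxed problem with measure $\mu$. Combining the two parts with the metrisability from the first step yields the density, hence the full statement.
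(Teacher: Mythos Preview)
The paper does not prove this proposition itself; it merely records it as a known result from the literature (attributed to Dal Maso--Mosco, the reference \cite{ASNSP_1987_4_14_3_423_0}). Your outline is a faithful sketch of exactly that classical argument --- torsion-function metrisation of $\gamma$-convergence, compactness via the representation of $\Gamma$-limits by the reconstruction formula for $\mu$ from $w$, and the Cioranescu--Murat perforation for density --- so your approach and the paper's coincide by construction.
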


\begin{note}
	The result above means that, for every $\mu$ in $\mathcal{M}_0(D)$, there exists a family of subsets $(E_n)_n$ of $D$, such that $\infty_{E_n}$ $\gamma$-converge to $\mu$.
\end{note}

We will use in the sequel the shape analysis tools that we introduced in this section, in order to study the optimization problem \eqref{pb:problem_1_opt_no_constraint}.

\subsubsection{Analysis}

Now, let us come back to our problem. Thanks to Proposition \ref{prop:problem_1_opt_no_constraint:reformulation}, our optimization problem \eqref{pb:problem_1_opt_no_constraint_pixel} can be rewritten by penalizing the Dirichlet boundary condition $u=f$ in $K$ and penalizing the constraint $\capop(K) \leq c$ as follow

\[ \max_{K\subseteq D} \min_{u\in H^1(D)} \frac{\alpha}{2}\int_D |\nabla u|^2\ dx + \frac{1}{2} \int_D u^2\ dx + \frac{1}{2}\int_D(u-f)^2\ d\infty_K - \beta\capop(\infty_K), \]

where $\beta>0$ depends on $c$. The term $\int_D(u-f)^2\ d\infty_K$ ensure us that $u$ is equals to $f$ in $K$ while the term $- \beta\capop(\infty_K)$ penalize the constraint $\capop(K) \leq c$. Since the optimization problem over $K$ with this kind of problem does not always a solution, we proceed to a relaxation process. We want to study the optimization problem below that we claim is the relaxed problem of the previous problem

\[ \max_{\mu\in\mathcal{M}_0(D)} \min_{u\in H^1(D)} \frac{\alpha}{2}\int_D |\nabla u|^2\ dx + \frac{1}{2} \int_D u^2\ dx + \frac{1}{2}\int_D(u-f)^2\ d\mu - \beta\capop(\mu), \]

where $\mu$ is in $\mathcal{M}_0(D)$. Here we do not want $\mu$ to simply be a characteristic function since a family of characteristic functions does not always converge to a characteristic function, which will be important later. For every $\mu$ in $\mathcal{M}_0(D)$ and $u$ in $H^1(D)$, we define $F_\mu$, from $H^1(D)$ into $\R\cup\{+\infty\}$, by

\[ F_\mu(u) := \begin{cases}
	\alpha\int_D |\nabla u|^2\ dx + \int_D u^2\ dx + \int_D (u-f)^2\ d\mu &,\ \text{if}\ |u| \leq |f|_\infty, \\
	+\infty &,\ \text{otherwise.}
\end{cases} \]

We have that $F_\mu$ is equicoercive with respect to $\mu$, for any $\mu$ in $\mathcal{M}_0(D)$. Indeed, let $u$ be in $H^1(D)$ such that $|u| \leq |f|_\infty$,

\begin{align*}
	F_\mu(u) & = \alpha\int_D |\nabla u|^2\ dx + \int_D u^2\ dx + \int_D (u-f)^2\ d\mu \\
	& = \alpha\int_D |\nabla u|^2\ dx + \int_D u^2\ dx + \int_D u^2\ d\mu + \int_D f^2\ d\mu - 2 \int_D uf\ d\mu \\
	& \geq \alpha\int_D |\nabla u|^2\ dx + \int_D u^2\ dx - 2 \int_D uf\ d\mu \\
	& \geq \alpha\int_D |\nabla u|^2\ dx + \int_D u^2\ dx - 2 |u|_\infty |f|_\infty \mu(D) \\
	& \geq \alpha\int_D |\nabla u|^2\ dx + \int_D u^2\ dx - 2 |f|_\infty^2 \mu(D).
\end{align*}

For every $\mu$ in $\mathcal{M}_0(D)$, we define $E$, from $\mathcal{M}_0(D)$ into $\R$, by

\[ E(\mu) := \min_{u\in H^1(D)} F_\mu(u) = \min_{u\in H^1(D)} \alpha\int_D |\nabla u|^2\ dx + \int_D u^2\ dx + \int_D (u-f)^2\ d\mu. \]

For a given $\mu$ in $\mathcal{M}_0(D)$, $E(\mu)$ correspond to the energy of \\

\begin{problem} Find $u$ in $H^1(D)$ such that
	\begin{equation}
		\left\{\begin{array}{rl}
			- \alpha \Delta u + u + \mu (u-f) = 0, & \text{in}\ D, \\
			\frac{\partial u}{\partial \mathbf{n}} = 0, & \text{on}\ \partial D. \\
		\end{array}\right .\label{eq::dirichlet_penalization:relaxed}
	\end{equation}
	\label{pb:problem_1:dirichlet_penalization:relaxed}
\end{problem}\vspace{0.2cm}

Thus, if $u$ is a solution of Problem \ref{pb:problem_1:dirichlet_penalization:relaxed} for a given $\mu$ in $\mathcal{M}_0(D)$, then $F_\mu(u)= E(\mu)$ and $u$ satisfied the maximum principle $|u| \leq |f|_\infty$. Since we want to include balls centering in $x_0$ in $D$, we do not want $x_0$ to be too close to the boundary of $D$. That is why, we introduce the following notations for $\delta>0$,

\[ D^{-\delta} := \{ x\in D\ |\ d(x,\partial D) \geq \delta \}\subseteq D, \]

\[ \mathcal{K}^\delta(D) := \{ K\subseteq D\ |\ K\ \text{closed},\ K\subseteq D^{-\delta} \}, \]

and

\[ \mathcal{M}_0^\delta(D) := \{ \mu\in\mathcal{M}_0(D)\ |\ \restriction{\mu}{D\setminus D^{-\delta}}=0 \} \subseteq\mathcal{M}_0(D). \]

The problem we study now is the following

\[ \max_{\mu\in\mathcal{M}_0^\delta(D)} \min_{u\in H^1(D)} \frac{\alpha}{2}\int_D |\nabla u|^2\ dx + \frac{1}{2} \int_D u^2\ dx + \frac{1}{2}\int_D(u-f)^2\ d\mu - \beta\capop(\mu). \]

Using the $\gamma$-compacity of $\mathcal{M}_0(D)$, Proposition \ref{prop:problem_1:M0:gamma_compacity} and the locality of the $\gamma(F)$-convergence, Proposition \ref{prop:problem_1:gamma_locality}, we have the result below. \\

\begin{proposition}[$\gamma$-compacity of $\mathcal{M}_0^\delta(D)$]
	The set $\mathcal{M}_0^\delta(D)$ defined above is compact with respect to the $\gamma$-convergence.
	\label{prop:problem_1:M0delta:gamma_compacity}
\end{proposition}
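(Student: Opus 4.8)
The plan is to obtain the $\gamma$-compactness of $\mathcal{M}_0^\delta(D)$ as a consequence of the $\gamma$-compactness of the ambient set $\mathcal{M}_0(D)$ (Proposition \ref{prop:problem_1:M0:gamma_compacity}), by checking that $\mathcal{M}_0^\delta(D)$ is a $\gamma$-closed subset of $\mathcal{M}_0(D)$: a closed subset of a compact space is compact. Since the $\gamma$-convergence on $\mathcal{M}_0(D)$ is induced by a metrizable topology, it is enough to argue with sequences, so ``$\gamma$-closed'' here means: the $\gamma$-limit of any $\gamma$-convergent sequence of elements of $\mathcal{M}_0^\delta(D)$ again belongs to $\mathcal{M}_0^\delta(D)$.

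The core step is then the following. Let $(\mu_n)_n \subseteq \mathcal{M}_0^\delta(D)$ be a sequence that $\gamma(F)$-converges to some $\mu \in \mathcal{M}_0(D)$; I must show $\restriction{\mu}{D\setminus D^{-\delta}}=0$, i.e. $\mu \in \mathcal{M}_0^\delta(D)$. The idea is to compare $(\mu_n)_n$ with the constant sequence equal to the null measure $0 \in \mathcal{M}_0(D)$. On the set $D' := D\setminus D^{-\delta} = \{x\in D : d(x,\partial D)<\delta\}$, which is open and hence quasi-open, we have $\restriction{\mu_n}{D'}=0$ for every $n$ by definition of $\mathcal{M}_0^\delta(D)$, so $\mu_n$ and $0$ coincide q.e. on $D'$. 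Since $(\mu_n)_n$ $\gamma(F)$-converges to $\mu$ and the constant sequence $(0)_n$ $\gamma(F)$-converges to $0$, the locality of the $\gamma$-convergence (Proposition \ref{prop:problem_1:gamma_locality}) gives that $\mu$ and $0$ coincide q.e. on $D'$, which is precisely $\restriction{\mu}{D'}=0$. Hence $\mu\in\mathcal{M}_0^\delta(D)$, and $\gamma$-closedness, hence $\gamma$-compactness, follows.

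The points that need a little care, none of which is a genuine obstacle, are: (i) the constant sequence $(0)_n$ indeed $\gamma(F)$-converges to $0$, because $F_0$ is lower semicontinuous on $L^2(D)$ (the Dirichlet term is $L^2$-l.s.c. and the constraint $|u|\le|f|_\infty$ is $L^2$-closed), so a constant sequence $\Gamma$-converges to its value; (ii) upgrading ``$\mu$ and $0$ coincide q.e. on $D'$'' to ``$\restriction{\mu}{D'}=0$'' as Borel measures, which uses the inner regularity of measures of $\mathcal{M}_0(D)$ with respect to quasi-open sets together with the fact that $\mu$ charges no set of zero capacity; (iii) the reduction from topological to sequential compactness, which rests on the metrizability of the $\gamma$-topology. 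The only real content of the argument is the single invocation of the locality theorem, so the proof is expected to be short.
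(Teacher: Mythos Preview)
Your proposal is correct and follows exactly the approach the paper indicates: the paper states the result as an immediate consequence of the $\gamma$-compactness of $\mathcal{M}_0(D)$ (Proposition~\ref{prop:problem_1:M0:gamma_compacity}) together with the locality of the $\gamma$-convergence (Proposition~\ref{prop:problem_1:gamma_locality}), and you have spelled out precisely this argument, comparing $(\mu_n)_n$ with the constant zero sequence on $D\setminus D^{-\delta}$ to obtain $\gamma$-closedness. Your attention to the technical points (i)--(iii) is welcome but none of them differs from what the paper implicitly relies on.
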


We have the following theorem \\

\begin{theorem}
	We have \[ \cl_\gamma \mathcal{K}_\delta(D) = \mathcal{M}_0^\delta(D), \]
	i.e. $\mathcal{K}_\delta(D)$ is dense into $\mathcal{M}_0^\delta(D)$ with respect to the $\gamma(F)$-convergence.
\end{theorem}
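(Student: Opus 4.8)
The plan is to prove the two inclusions $\cl_\gamma\mathcal{K}_\delta(D)\subseteq\mathcal{M}_0^\delta(D)$ and $\mathcal{M}_0^\delta(D)\subseteq\cl_\gamma\mathcal{K}_\delta(D)$ separately, always using the identification $K\mapsto\infty_K$ together with the three tools just recalled: the density of $\{\infty_{D\setminus A}: A \text{ smooth open}\}$ in $\mathcal{M}_0(D)$ (Proposition \ref{prop:problem_1:M0:gamma_compacity}), the $\gamma$-compacity of $\mathcal{M}_0^\delta(D)$ (Proposition \ref{prop:problem_1:M0delta:gamma_compacity}), and the locality of $\gamma(F)$-convergence (Proposition \ref{prop:problem_1:gamma_locality}). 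For the first inclusion I would argue as follows: if $K\in\mathcal{K}_\delta(D)$ then $K\subseteq D^{-\delta}$, so for every Borel $B\subseteq D\setminus D^{-\delta}$ one has $B\cap K=\emptyset$ and hence $\infty_K(B)=0$, i.e. $\mathcal{K}_\delta(D)\subseteq\mathcal{M}_0^\delta(D)$; if moreover $\infty_{K_n}$ $\gamma$-converges to $\mu$ with all $K_n\in\mathcal{K}_\delta(D)$, then by $\gamma$-compacity a subsequence $\gamma$-converges to an element of $\mathcal{M}_0^\delta(D)$, which by uniqueness of the $\gamma$-limit equals $\mu$, so $\mu\in\mathcal{M}_0^\delta(D)$.

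The substantive part is $\mathcal{M}_0^\delta(D)\subseteq\cl_\gamma\mathcal{K}_\delta(D)$. Fix $\mu\in\mathcal{M}_0^\delta(D)$; by Proposition \ref{prop:problem_1:M0:gamma_compacity} choose smooth open sets $A_n\subseteq D$ with $\infty_{D\setminus A_n}$ $\gamma$-converging to $\mu$. The complements $D\setminus A_n$ may stick out into the collar $D\setminus D^{-\delta}$, so I would truncate them: set $K_n:=(D\setminus A_n)\cap D^{-\delta}$. Since $x\mapsto d(x,\partial D)$ is continuous, $D^{-\delta}$ is closed in $D$, hence $K_n$ is closed in $D$ and contained in $D^{-\delta}$, so $K_n\in\mathcal{K}_\delta(D)$ and $\infty_{K_n}\in\mathcal{M}_0^\delta(D)$ by the first part. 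The key elementary remark is that truncating does not change the measure on $D^{-\delta}$: for every Borel $B\subseteq D^{-\delta}$ we have $B\cap K_n=B\cap(D\setminus A_n)$, so $\infty_{K_n}(B)=\infty_{D\setminus A_n}(B)$; thus $\infty_{K_n}$ and $\infty_{D\setminus A_n}$ coincide q.e. on $D^{-\delta}$ for every $n$.

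To conclude, by the $\gamma$-compacity of $\mathcal{M}_0^\delta(D)$ I would pass to a subsequence along which $\infty_{K_{n_k}}$ $\gamma$-converges to some $\nu\in\mathcal{M}_0^\delta(D)$, and then identify $\nu$ with $\mu$. Applying the locality of $\gamma(F)$-convergence to the pair of sequences $(\infty_{K_{n_k}})_k$ and $(\infty_{D\setminus A_{n_k}})_k$, which coincide q.e. on $D^{-\delta}$, gives $\nu=\mu$ q.e. on $D^{-\delta}$; on $D\setminus D^{-\delta}$ both $\nu$ and $\mu$ vanish because they belong to $\mathcal{M}_0^\delta(D)$; hence $\nu=\mu$ q.e. on $D$, and therefore $\nu=\mu$ as elements of $\mathcal{M}_0(D)$ (such measures being determined by their values on Borel sets off a fixed set of zero capacity). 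Consequently $\infty_{K_{n_k}}$ $\gamma$-converges to $\mu$, so $\mu\in\cl_\gamma\mathcal{K}_\delta(D)$. The one point that needs care is that the near-boundary truncation could in principle destroy the $\gamma$-convergence; this is precisely what the locality result prevents, because the truncation alters the measures only inside $\{x\in D: d(x,\partial D)<\delta\}$, a region where the target $\mu$ is already null, so no part of the $\gamma$-limit is lost. Beyond correctly threading together density, truncation, compacity and locality, I do not anticipate a real obstacle.
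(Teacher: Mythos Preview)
Your proof is correct and follows the same overall skeleton as the paper (identify $K\in\mathcal{K}_\delta(D)$ with $\infty_K$, use $\gamma$-compacity of $\mathcal{M}_0^\delta(D)$ for the easy inclusion, and for the hard one start from a density sequence $\infty_{D\setminus A_n}\to\mu$ and modify it near the collar $D\setminus D^{-\delta}$), but the modification step is handled differently.

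The paper first invokes locality to choose the approximating sets so that $K_n\subseteq (D^{-\delta})^{1/n}$, and then shrinks each $K_n$ by a homothety of ratio $\varepsilon_n\nearrow 1$ to force $\varepsilon_n K_n\subseteq D^{-\delta}$, claiming that $\varepsilon_n K_n$ still $\gamma$-converges to $\mu$. You instead truncate bluntly, $K_n:=(D\setminus A_n)\cap D^{-\delta}$, extract a $\gamma$-convergent subsequence in $\mathcal{M}_0^\delta(D)$ by compacity, and then use locality \emph{a posteriori} on the pair $(\infty_{K_{n_k}},\infty_{D\setminus A_{n_k}})$ to identify the limit with $\mu$ on $D^{-\delta}$, the collar being handled by membership in $\mathcal{M}_0^\delta(D)$. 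Your route trades the (unjustified in the paper) continuity of $\gamma$-convergence under small homotheties for a subsequence-plus-identification argument; it is arguably cleaner because every step is a direct application of the stated propositions.
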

\begin{proof}
	For every $K$ in $\mathcal{K}_\delta(D)$, we have $K\subset D$. Thus $\infty_K$ is in $\mathcal{M}_0^\delta(D)$, so $\mathcal{K}_\delta(D) \subseteq \mathcal{M}_0^\delta(D)$ if we identify a set of $\mathcal{K}_\delta(D)$ by its measure. By the $\gamma$-compacity of $\mathcal{M}_0^\delta(D)$, see Proposition \ref{prop:problem_1:M0delta:gamma_compacity}, we have $\cl_\gamma \mathcal{K}_\delta(D) \subseteq \mathcal{M}_0^\delta(D)$.\\

	Conversely, let $\mu$ be in $\mathcal{M}_0^\delta(D)$. We need to prove that $\mu$ is in $\cl_\gamma \mathcal{K}_\delta(D)$ i.e. $\mu$ is the $\gamma$-limit of elements of $\mathcal{K}_\delta(D)$. This is to be understood as, there exist elements $(K_n)_n$ of $\mathcal{K}_\delta(D)$ such that, $\infty_{K_n}$ $\gamma$-converge to $\mu$. Since $\mathcal{M}_0(D)$ is dense and $\mathcal{M}_0^\delta(D)\subset \mathcal{M}_0(D)$, we know that there exists a sequence $(K_n)_n$ of $\mathcal{P}(D)$ such that, $\infty_{K_n}$ $\gamma$-converge to $\mu$. We need to show that $K_n$ are in $\mathcal{K}_\delta(D)$. By the locality property of the $\gamma$-convergence (Proposition \ref{prop:problem_1:gamma_locality}), we can choose $(K_n)_n$ such that $K_n\subseteq (D^{-\delta})^{1/n}$. Making a homothety $\varepsilon_n K_n$, for $\varepsilon_n<0$ such that $\varepsilon_n K_n \subseteq D^{-\delta}$, we have $\varepsilon_n K_n\in \mathcal{K}_\delta(D)$. Moreover, we can choose $\varepsilon_n$ such that $\varepsilon_n \to 1$, therefore $\varepsilon_n K_n$ $\gamma$-converge to $\mu$.
\end{proof}

Similarly to Lemma 3.4 in \cite{ref2}, we have \\

\begin{theorem}
	Let $\mu_n\in\mathcal{K}_\delta(D)$. If $\mu_n$ $\gamma$-converge to $\mu$, then $\capop_\nu(\mu_n)\to\capop_\nu(\mu)$.
\end{theorem}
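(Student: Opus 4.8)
The plan is to realise $\capop_\nu(\mu_n)$ as the minimum value of a capacitary functional and then to apply the fundamental theorem of $\Gamma$-convergence. For $\lambda\in\mathcal{M}_0(D)$ set $G_\lambda(u):=\int_D|\nabla u|^2\,dx+\nu\int_D u^2\,dx+\int_D(u-1)^2\,d\lambda$ for $u\in H^1_0(D)$, and $G_\lambda(u):=+\infty$ for $u\in L^2(D)\setminus H^1_0(D)$. Since $G_\lambda$ is convex, coercive on $H^1_0(D)$ and $L^2(D)$-lower semicontinuous, its infimum is attained, so $\capop_\nu(\mu_n)=\min_{L^2(D)}G_{\mu_n}$ and $\capop_\nu(\mu)=\min_{L^2(D)}G_{\mu}$. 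Note that the $\gamma$-limit $\mu$ lies in $\mathcal{M}_0^\delta(D)$ by Proposition~\ref{prop:problem_1:M0delta:gamma_compacity} and is in general \emph{not} of the form $\infty_K$, which is precisely why the measure formulation of the capacity is the right object here. Moreover, testing with a fixed cutoff $\eta\in C_c^\infty(D)$ with $\eta\equiv 1$ on $D^{-\delta}$ gives $G_{\mu_n}(\eta),G_\mu(\eta)\leq\int_D|\nabla\eta|^2+\nu\int_D\eta^2=:M_0$, because $\mu_n$ and $\mu$ charge nothing outside $D^{-\delta}$; hence all the minima in play are uniformly bounded and finite.

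The conclusion then rests on two ingredients. The first, equicoercivity, is immediate: if $\sup_n G_{\mu_n}(u_n)=:M<\infty$, then $\int_D|\nabla u_n|^2\,dx+\nu\int_D u_n^2\,dx\leq M$, so $(u_n)_n$ is bounded in $H^1_0(D)$ and, by Rellich's theorem, relatively compact in $L^2(D)$. The second, and genuine, ingredient is the $\Gamma$-convergence $G_{\mu_n}\xrightarrow{\Gamma}G_\mu$ in $L^2(D)$, which is where the hypothesis $\mu_n\xrightarrow{\gamma}\mu$ (that is, $F_{\mu_n}\xrightarrow{\Gamma}F_\mu$ in $L^2(D)$) is used. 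The point is that $\gamma$-convergence is intrinsic to the sequence of measures: it is insensitive to the zero-order datum appearing in the energy (here $f$ versus $1$), to the positive coefficients ($\alpha,1$ versus $1,\nu$), and to the choice of homogeneous boundary condition ($H^1(D)$ with the Neumann condition and the truncation $|u|\leq|f|_\infty$ versus $H^1_0(D)$) — all these formulations of $\gamma$-convergence coincide. This equivalence is exactly the mechanism underlying Lemma~3.4 of \cite{ref2}, and it transfers the $\Gamma$-convergence of $(F_{\mu_n})_n$ to that of $(G_{\mu_n})_n$. Concretely, for the $\Gamma$-$\liminf$ inequality, if $u_n\to u$ in $L^2(D)$ with $\liminf G_{\mu_n}(u_n)<\infty$ one passes to a bounded subsequence, gets $u\in H^1_0(D)$ with $u_n\rightharpoonup u$ in $H^1_0(D)$, handles the gradient and $\nu\int u^2$ terms by weak lower semicontinuity, and absorbs $\liminf\int_D(u_n-1)^2\,d\mu_n\geq\int_D(u-1)^2\,d\mu$ into the content of the $\gamma$-convergence; for the recovery sequence one starts from the one supplied by $\gamma$-convergence, truncates between $0$ and $1$ (which only decreases $G$, since pointwise $|\,\mathrm{trunc}(t)-1|\leq|t-1|$), and corrects the boundary values by the cutoff $\eta$, using that the measure terms are unaffected because everything is supported in $D^{-\delta}$.

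Putting the two ingredients together, the fundamental theorem of $\Gamma$-convergence gives $\min_{L^2(D)}G_{\mu_n}\to\min_{L^2(D)}G_\mu$, i.e.\ $\capop_\nu(\mu_n)\to\capop_\nu(\mu)$. The only delicate step is the second ingredient — transferring the $\gamma$-convergence that is stated through the energy $F$ to the capacitary functional $G$ with different data, coefficients and boundary condition — and I would either invoke directly the equivalence of these formulations of $\gamma$-convergence as in \cite{ref2}, or, equivalently, use the characterisation of $\gamma$-convergence by strong $L^2(D)$-convergence of the resolvents $(-\alpha\Delta+1+\mu_n)^{-1}\to(-\alpha\Delta+1+\mu)^{-1}$, from which convergence of all the associated energies, and in particular of the capacities, is standard.
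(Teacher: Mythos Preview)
Your proposal is correct and follows essentially the same route as the paper, which does not give an independent proof but simply points to Lemma~3.4 in \cite{ref2}: realise $\capop_\nu(\cdot)$ as the minimum of a capacitary energy, use that $\gamma$-convergence is intrinsic to the sequence of measures (independent of the datum, of the positive coefficients, and of the homogeneous boundary condition), and conclude by equicoercivity and the fundamental theorem of $\Gamma$-convergence. Your sketch in fact spells out the mechanism behind that lemma more explicitly than the paper does.
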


\begin{theorem}
	\label{thm:problem_1:convergence_capacity}
	If $(\mu_n)_n$ in $\mathcal{M}_0^\delta(D)$ $\gamma$-converges to $\mu$, then $\mu$ is in $\mathcal{M}_0^\delta(D)$ and $F_{\mu_n}$ $\Gamma$-converges to $F_\mu$ in $L^2(D)$.
\end{theorem}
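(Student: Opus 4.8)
The plan is to prove the two assertions in turn. That $\mu\in\mathcal{M}_0^\delta(D)$ is the easy part: $\mathcal{M}_0(D)$ endowed with the $\gamma$-convergence is a compact metrizable space, so $\gamma$-limits are unique, and by the $\gamma$-compactness of $\mathcal{M}_0^\delta(D)$ (Proposition~\ref{prop:problem_1:M0delta:gamma_compacity}) this subset is $\gamma$-closed; hence the $\gamma$-limit $\mu$ of the sequence $(\mu_n)_n\subseteq\mathcal{M}_0^\delta(D)$ stays in $\mathcal{M}_0^\delta(D)$. A more hands-on alternative is to apply the locality of $\gamma$-convergence (Proposition~\ref{prop:problem_1:gamma_locality}) to $(\mu_n)_n$ and to the constant null sequence, which coincide on $D\setminus D^{-\delta}$ for each $n$ and $\gamma$-converge to $\mu$ and to $0$ respectively, obtaining $\restriction{\mu}{D\setminus D^{-\delta}}=0$.

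For the $\Gamma$-convergence I would first dispose of the truncation. Set $c:=|f|_\infty$ and let $T_c$ be the pointwise truncation at level $c$; it is $1$-Lipschitz, continuous on $L^2(D)$, fixes $f$ almost everywhere, and composing any $u\in H^1(D)$ with $T_c$ does not increase $\int_D|\nabla u|^2\,dx$, $\int_D u^2\,dx$, or $\int_D(u-f)^2\,d\mu_n$ (the last because $|T_cu-f|=|T_cu-T_cf|\le|u-f|$ a.e.). Hence a recovery sequence for the untruncated functional $\widetilde F_\mu(u):=\alpha\int_D|\nabla u|^2\,dx+\int_D u^2\,dx+\int_D(u-f)^2\,d\mu$ can be post-composed with $T_c$ without raising its energy, while any sequence with uniformly bounded $\widetilde F_{\mu_n}$-energy is eventually contained in $\{\,|u|\le c\,\}$, a set stable under $L^2(D)$-convergence (through a.e.\ convergent subsequences). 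Therefore it is enough to show that $\widetilde F_{\mu_n}\to\widetilde F_\mu$ in the sense of $\Gamma$-convergence in $L^2(D)$.

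To prove this I would split off the $\mu$-independent term $\int_D u^2\,dx$, which is continuous in the $L^2(D)$-topology and so does not affect $\Gamma$-limits, and concentrate on $u\mapsto\alpha\int_D|\nabla u|^2\,dx+\int_D(u-f)^2\,d\mu$. The hypothesis is exactly what makes this functional stable: by the Dal Maso--Mosco theory of $\gamma$-convergence (\cite{ASNSP_1987_4_14_3_423_0,ASNSP_1997_4_24_2_239_0}), in the form used in \cite{ref2}, $\mu_n$ $\gamma$-converging to $\mu$ entails that, for a fixed datum $g\in H^1(D)\cap L^\infty(D)$, the functionals $u\mapsto\alpha\int_D|\nabla u|^2\,dx+\int_D(u-g)^2\,d\mu_n$ $\Gamma$-converge in $L^2(D)$ to $u\mapsto\alpha\int_D|\nabla u|^2\,dx+\int_D(u-g)^2\,d\mu$ (the coefficient $\alpha$ and the extra term $\int_D u^2\,dx$ being the routine modifications of the setting of \cite{ref2}). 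Concretely, the $\liminf$ inequality follows, along $u_n\to u$ in $L^2(D)$ of uniformly bounded energy, by extracting a weak $H^1(D)$-limit and combining weak lower semicontinuity of the Dirichlet integral with the convergence of the measure terms supplied by $\gamma$-convergence, while recovery sequences are produced from the relaxed problems for $\mu_n$ (Problem~\ref{pb:problem_1:dirichlet_penalization:relaxed}), as in \cite{ref2}. Specializing $g=f$ and adding back $\int_D u^2\,dx$ yields $\widetilde F_{\mu_n}\to\widetilde F_\mu$.

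The crux --- the only step that is not bookkeeping --- is this transfer of $\gamma$-convergence from a constant datum (the textbook case, equivalently the convergence of the torsion functions $w_n$ solving $-\alpha\Delta w_n+w_n+\mu_n(w_n-1)=0$) to the non-constant datum $f$: the term $\int_D(u-f)^2\,d\mu_n$ is neither continuous nor finite-valued on $L^2(D)$ --- for $\mu_n=\infty_{K_n}$ it degenerates into the obstacle constraint $u=f$ q.e.\ on $K_n$ --- so it cannot be handled as a continuous perturbation. The way around is that $\gamma$-convergence controls the whole resolvent family: the solutions of $-\alpha\Delta u+u+\mu_n(u-g)=0$ (with the Neumann condition on $\partial D$) converge in $L^2(D)$, weakly in $H^1(D)$, to the solution with $\mu$, for every datum $g$; expanding $\int_D(u_n-f)^2\,d\mu_n$ into its quadratic, cross and constant parts, each is then controlled through the convergence of the relevant solution sequences, exactly as in Lemma~3.4 of \cite{ref2}. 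It remains only to observe that all this is compatible with the Neumann condition on $\partial D$ and with the restriction to $\mathcal{M}_0^\delta(D)$ established above --- both innocuous, since the measures in $\mathcal{M}_0^\delta(D)$ vanish in a fixed neighbourhood of $\partial D$.
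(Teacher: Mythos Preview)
Your modular reorganization --- isolate the truncation at level $c=|f|_\infty$, peel off the $L^2$-continuous term $\int_D u^2\,dx$, then treat what remains --- is a sound decomposition, and in fact your treatment of the truncation is more explicit than the paper's (which silently assumes $|u|\le|f|_\infty$ in the $\limsup$ step without checking that the recovery sequence satisfies it). One small correction there: bounded $\widetilde F_{\mu_n}$-energy does not force $|u_n|\le c$; the right reduction for the $\liminf$ is that if $\liminf F_{\mu_n}(u_n)<\infty$ then along a subsequence $|u_n|\le c$ a.e., whence $|u|\le c$ a.e.\ via an a.e.\ convergent further subsequence.

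Where your proposal diverges from the paper, and where it has a genuine gap, is the core step. The paper's proof is \emph{entirely} the passage from the $H^1_0$ setting (where the $\gamma$-hypothesis lives) to $H^1(D)$ with Neumann data: for the $\liminf$ it multiplies by a cutoff $\varphi\in C^\infty_c(D)$ with $\varphi\equiv1$ on $D^{-\delta}$, applies the hypothesis to the compactly supported $u_n\varphi$, expands, discards converging cross terms, and takes the supremum over $\varphi$; for the $\limsup$ it extends $u$ to $\tilde u\in H^1_0(D^\delta)$, uses locality to transport $\gamma$-convergence to an $\varepsilon$-penalized functional $G_\mu$ on the enlarged domain, and diagonalizes in $\varepsilon$. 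You call this transition ``innocuous'' because the measures vanish near $\partial D$; that is the reason the cutoff/extension works, not a substitute for carrying it out.

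Two concrete failures in your sketch. First, the $\liminf$ outline ``weak lower semicontinuity of the Dirichlet integral plus convergence of the measure terms'' does not survive: for $\mu_n=\infty_{K_n}$ the quantity $\int_D(u_n-f)^2\,d\mu_n$ is either $0$ or $+\infty$ and has no autonomous limit, so it cannot be decoupled from the gradient part --- the $\gamma$-hypothesis must be applied to the whole functional at once, and that hypothesis is stated for compactly supported test functions, which is exactly why the cutoff is needed. Second, the claim that $\gamma$-convergence yields convergence of the \emph{Neumann} resolvents $-\alpha\Delta u+u+\mu_n(u-g)=0$ is not given by the standard Dal Maso--Mosco theory (set up for Dirichlet data) and is essentially equivalent to the theorem itself; invoking \cite{ref2} here is close to circular, since the relevant result there (Theorem~3.5) is the direct analogue of the present statement, proved by the same cutoff/extension technique the paper reproduces. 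What you identify as the crux --- passing from constant to non-constant datum $f$ --- is in fact the routine part once one is in the right function space.
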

\begin{proof}
	This proof is similar to the one of Theorem 3.5 in \cite{ref2}. We suppose that $(\mu_n)_n$ in $\mathcal{M}_0^\delta(D)$ $\gamma$-converges to $\mu$. By $\gamma$-compacity, Proposition \ref{prop:problem_1:M0delta:gamma_compacity}, $\mu$ is in $\mathcal{M}_0^\delta(D)$. Now we prove $F_{\mu_n}$ $\Gamma$-converges to $F_\mu$ in $L^2(D)$. \\

	\textbullet ~ $\liminf$ : Let $(u_n)_n$ be a sequence in $H^1(D)$ which converge in $L^2(D)$ to $u$. Let $\varphi\in C^\infty_c(D)$, $0\leq\varphi\leq1$ and $\varphi=1$ in $D^{-\delta}$. Then $(u_n\varphi)_n$ is a sequence in $H^1(D^{-\delta})$ and $u_n\varphi \to_{n\to+\infty} u\varphi$ in $L^2(D)$. Since $(\mu_n)_n$ $\gamma(F)$-converges to $\mu$, we have, in particular for $(\mu_n)_n$,

	\[ \liminf_{n\to +\infty} F_{\mu_n}(u_n\varphi) \geq F_\mu(u\varphi) \]

	i.e.

	\[ \liminf_{n\to+\infty}\Big( \alpha\int_D |\nabla (u_n\varphi)|^2\ dx + \int_D u_n^2\varphi^2\ dx + \int_D (u_n\varphi-f)^2\ d\mu_n \Big) \geq \alpha\int_D |\nabla (u\varphi)|^2\ dx + \int_D u^2\varphi^2\ dx + \int_D (u\varphi-f)^2\ d\mu. \]

	By developping,

	\[ \liminf_{n\to+\infty}\Big( \alpha\int_D |\varphi\nabla u_n|^2\ dx + \alpha\int_D |u_n\nabla\varphi|^2\ dx + 2\alpha\int_D u_n\varphi\nabla u_n\cdot \nabla \varphi\ dx + \int_D u_n^2\varphi^2\ dx + \int_D (u_n\varphi-f)^2\ d\mu_n \Big) \]

	\[ \geq \alpha\int_D |\varphi\nabla u|^2\ dx + \alpha\int_D |u\nabla\varphi|^2\ dx + 2\alpha\int_D u\varphi\nabla u\cdot \nabla \varphi\ dx + \int_D u^2\varphi^2\ dx + \int_D (u\varphi-f)^2\ d\mu. \]

	Eliminating the converging terms, except $\int_D u_n^2\varphi^2\ dx$ and $\int_D u^2\varphi^2\ dx$, we get

	\[ \liminf_{n\to+\infty}\Big( \alpha\int_D |\varphi\nabla u_n|^2\ dx + \int_D u_n^2\varphi^2\ dx + \int_D (u_n\varphi-f)^2\ d\mu_n \Big) \geq \alpha\int_D |\varphi\nabla u|^2\ dx + \int_D u^2\varphi^2\ dx + \int_D (u\varphi-f)^2\ d\mu. \]

	In the left hand side, we use that $0\leq\varphi\leq 1$, and in the right hand side, we take the supremum over all admissible $\varphi$, since the inequality above is true for all $\varphi$,

	\[ \liminf_{n\to+\infty}\Big( \alpha\int_D |\nabla u_n|^2\ dx + \int_D u_n^2\ dx + \int_D (u_n-f)^2\ d\mu_n \Big) \geq \sup_\varphi \Big( \alpha\int_D |\varphi\nabla u|^2\ dx + \int_D u^2\varphi^2\ dx + \int_D (u\varphi-f)^2\ d\mu \Big). \]

	Since $\mu$ is equals to $0$ in $D\setminus D^{-\delta}$, we have 

	\[ \liminf_{n\to+\infty}\Big( \alpha\int_D |\nabla u_n|^2\ dx + \int_D u_n^2\ dx + \int_D (u_n-f)^2\ d\mu_n \Big) \geq \sup_\varphi \Big( \alpha\int_D |\nabla u|^2\varphi^2\ dx + \int_D u^2\varphi^2\ dx \Big) + \int_D (u-f)^2\ d\mu. \]

	We get the $\liminf$ inequality.
	
	\textbullet ~ $\limsup$ : Let $u$ be in $H^1(D)$, such that the principle maximum is fulfilled, i.e. $|u|\leq |f|_\infty$, and $\tilde{u}$ be the extension of $u$ in $H^1_0(D^\delta)$, where $D^\delta$ is the dilatation by a factor $\delta$ of $D$. By the locality property of the $\gamma$-convergence, Property \ref{prop:problem_1:gamma_locality}, we have that $\mu_n$ $\gamma$-converge to $\mu(G)$ in $D^\delta$, where $G$ is

	\[ G_\mu(u) = \int_D |\nabla u|^2\ dx + \int_D u^2\ dx + \varepsilon\int_{D^\delta \setminus D} |\nabla u|^2\ dx + \varepsilon\int_{D^\delta \setminus D} u^2\ dx + \int_D (u-f)^2\ d\mu, \]

	for $\varepsilon >0$. Hence, there exists a sequence $(u_n^\varepsilon)_n$ of $H^1_0(D^\delta)$ such that $u_n^\varepsilon$ converge to $\tilde{u}$ in $L^2(D^\delta)$ and $G_\mu(\tilde{u}) \geq \limsup_{n\to+\infty} G_{\mu_n}(u_n^\varepsilon)$ i.e.

	\[ \int_D |\nabla \tilde{u}|^2\ dx + \int_D \tilde{u}^2\ dx + \varepsilon\int_{D^\delta \setminus D} |\nabla \tilde{u}|^2\ dx + \varepsilon\int_{D^\delta \setminus D} \tilde{u}^2\ dx + \int_D (\tilde{u}-f)^2\ d\mu \]

	\[ \geq \limsup_{n\to+\infty} \int_D |\nabla u_n^\varepsilon|^2\ dx + \int_D (u_n^\varepsilon)^2\ dx + \varepsilon\int_{D^\delta \setminus D} |\nabla u_n^\varepsilon|^2\ dx + \varepsilon\int_{D^\delta \setminus D} (u_n^\varepsilon)^2\ dx + \int_D (u_n^\varepsilon-f)^2\ d\mu_n. \]

	Thus, we have 

	\[ \int_D |\nabla \tilde{u}|^2\ dx + \int_D \tilde{u}^2\ dx + \varepsilon\int_{D^\delta \setminus D} |\nabla \tilde{u}|^2\ dx + \varepsilon\int_{D^\delta \setminus D} \tilde{u}^2\ dx + \int_D (\tilde{u}-f)^2\ d\mu \]

	\[ \geq \limsup_{n\to+\infty} \int_D |\nabla u_n^\varepsilon|^2\ dx + \int_D (u_n^\varepsilon)^2\ dx + \int_D (u_n^\varepsilon-f)^2\ d\mu_n. \]

	Since $\tilde{u}$ is fixed, we make $\varepsilon$ tends to $0$ and extract by a diagonal procedure a subsequence $u_n^{\varepsilon_n}$ converging in $L^2(D^\delta)$ to $\tilde{u}$ i.e.

	\[ \int_D |\nabla \tilde{u}|^2\ dx + \int_D \tilde{u}^2\ dx + \int_D (\tilde{u}-f)^2\ d\mu \geq \limsup_{n\to+\infty} \int_D |\nabla u_n^{\varepsilon_n}|^2\ dx + \int_D (u_n^{\varepsilon_n})^2\ dx + \int_D (u_n^{\varepsilon_n}-f)^2\ d\mu_n. \]

	Setting $u_n := u_n^{\varepsilon_n}|_{D} \in H^1(D)$, we have since $u = \tilde{u}|_D$,

	\[ \int_D |\nabla u|^2\ dx + \int_D u^2\ dx + \int_D (u-f)^2\ d\mu \geq \limsup_{n\to+\infty} \int_D |\nabla u_n|^2\ dx + \int_D (u_n)^2\ dx + \int_D (u_n-f)^2\ d\mu_n. \]
\end{proof}

Finally, here is the main result of this section. \\

\begin{theorem}
	We have \[ \sup_{K\in\mathcal{K}_\delta(D)} \big( E(\infty_K) - \beta\capop_\nu(\infty_K) \big) = \max_{\mu\in\mathcal{M}_0^\delta(D)}\big( E(\mu) - \beta\capop_\nu(\mu) \big). \]
\end{theorem}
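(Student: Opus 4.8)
The statement is a relaxation identity, and I would prove it in three steps: the easy inequality coming from the inclusion $\mathcal{K}_\delta(D)\hookrightarrow\mathcal{M}_0^\delta(D)$, the reverse inequality obtained by approximating an arbitrary admissible measure by sets, and the attainment of the supremum over $\mathcal{M}_0^\delta(D)$, which is what allows one to write a maximum. Throughout, set $\Phi(\mu):=E(\mu)-\beta\capop_\nu(\mu)$ on $\mathcal{M}_0^\delta(D)$. Identifying each $K\in\mathcal{K}_\delta(D)$ with $\infty_K\in\mathcal{M}_0^\delta(D)$ and recalling that $\capop_\nu(\infty_K)=\capop_\nu(K)$, the inclusion immediately gives $\sup_{K\in\mathcal{K}_\delta(D)}\Phi(\infty_K)\le\sup_{\mu\in\mathcal{M}_0^\delta(D)}\Phi(\mu)$.

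For the reverse inequality, fix $\mu\in\mathcal{M}_0^\delta(D)$. The density theorem established above produces a sequence $(K_n)_n$ in $\mathcal{K}_\delta(D)$ with $\infty_{K_n}\xrightarrow{\gamma}\mu$. By Theorem \ref{thm:problem_1:convergence_capacity}, $F_{\infty_{K_n}}$ $\Gamma$-converges to $F_\mu$ in $L^2(D)$; coupling this with the equicoercivity of the functionals $F_\mu$ established above yields convergence of the minimum values, i.e. $E(\infty_{K_n})\to E(\mu)$. Concretely, finiteness of $F_{\infty_{K_n}}$ forces $|u|\le|f|_\infty$ and $u=f$ q.e. on $K_n$, so each sublevel set is bounded in $H^1(D)$ and hence precompact in $L^2(D)$ by the Rellich theorem, which is exactly the compactness hypothesis needed to pass from $\Gamma$-convergence to convergence of minima. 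The continuity of the capacity along $\mathcal{K}_\delta(D)$ (the theorem stated just before Theorem \ref{thm:problem_1:convergence_capacity}) gives $\capop_\nu(\infty_{K_n})\to\capop_\nu(\mu)$, whence $\Phi(\infty_{K_n})\to\Phi(\mu)$ and therefore $\sup_{K\in\mathcal{K}_\delta(D)}\Phi(\infty_K)\ge\Phi(\mu)$. Taking the supremum over $\mu\in\mathcal{M}_0^\delta(D)$ shows that the two suprema coincide; denote their common value by $s$.

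It remains to prove that $s$ is attained on $\mathcal{M}_0^\delta(D)$, and the point is to pick the maximizing sequence inside the smaller class $\mathcal{K}_\delta(D)$: choose $K_n\in\mathcal{K}_\delta(D)$ with $\Phi(\infty_{K_n})\to s$. By the $\gamma$-compactness of $\mathcal{M}_0^\delta(D)$ (Proposition \ref{prop:problem_1:M0delta:gamma_compacity}), a subsequence satisfies $\infty_{K_n}\xrightarrow{\gamma}\mu^\star$ for some $\mu^\star\in\mathcal{M}_0^\delta(D)$. Since the terms of this subsequence still lie in $\mathcal{K}_\delta(D)$, the two convergence results used in the previous step apply unchanged, giving $E(\infty_{K_n})\to E(\mu^\star)$ and $\capop_\nu(\infty_{K_n})\to\capop_\nu(\mu^\star)$, hence $\Phi(\mu^\star)=\lim_n\Phi(\infty_{K_n})=s$. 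Thus $\sup_{\mu\in\mathcal{M}_0^\delta(D)}\Phi(\mu)=\Phi(\mu^\star)$ is a maximum, which is the asserted identity. The main obstacle in the whole argument is not the density, which is already in hand, but the upgrade from the $\Gamma$-convergence $F_{\infty_{K_n}}\xrightarrow{\Gamma}F_\mu$ to the actual convergence $E(\infty_{K_n})\to E(\mu)$ of the minima — this is where equicoercivity, ultimately resting on the maximum-principle bound $|u|\le|f|_\infty$, is essential; a secondary subtlety is the choice of a maximizing sequence among sets rather than among measures, which is what makes the capacity-continuity theorem (stated only for sequences in $\mathcal{K}_\delta(D)$) applicable.
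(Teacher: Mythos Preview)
Your proof is correct and uses the same ingredients as the paper --- density of $\mathcal{K}_\delta(D)$ in $\mathcal{M}_0^\delta(D)$, $\gamma$-compactness, equicoercivity plus $\Gamma$-convergence to pass to the limit in the energies, and continuity of the capacity along $\gamma$-convergent sequences of sets. The paper's own proof essentially writes out only your Step~3 (extract a $\gamma$-convergent maximizing subsequence of sets and identify its limit as the maximizer), leaving the reverse inequality $\sup_K\ge\sup_\mu$ implicit; your three-step organisation is a cleaner and more complete version of the same argument.
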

\begin{proof}
	Let $(K_n)_n\subset\mathcal{K}_\delta(D)$ be a maximizing sequence of $E(\infty_K) - \beta\capop_\nu(\infty_K)$ i.e.

	\[ \lim_{n\to+\infty} E(\infty_{K_n}) - \beta\capop_\nu(\infty_{K_n}) = \sup_{K\in\mathcal{K}_\delta(D)} \big( E(\infty_K) - \beta\capop_\nu(\infty_K) \big). \]

	One can extract, since $\mathcal{M}_0^\delta(D)$ is a compact metric space when endowed with the distance $d_\gamma$ (Proposition \ref{prop:problem_1:M0delta:gamma_compacity}), from $(\infty_{K_n})_n\subset\mathcal{M}_0^\delta(D)$ a $\gamma$-convergent subsequence. We denote by $\mu_\text{lim}$ this $\gamma$-limit. We know that $\mu_\text{lim}$ is in $\mathcal{M}_0^\delta(D)$ since $\mathcal{M}_0^\delta(D)$ is dense with respect to the $\gamma$-convergence (Proposition \ref{prop:problem_1:M0delta:gamma_compacity}). We denote by $G(\mu_\text{lim})$ the value

	\[ G(\mu_\text{lim}) := \lim_{n\to+\infty} E(\infty_{K_n}) - \beta\capop_\nu(\infty_{K_n}). \]

	By definition of the $\gamma$-convergence, we have $\Gamma-\lim_{n\to+\infty} F_{\infty_{K_n}} = F_{\mu_\text{lim}}$. Since $F_{\infty_{K_n}}$ is equicoercive, we can apply Theorem 7.8 in \cite{ref12}. It leads

	\[ E(\mu_\text{lim}) := \min_{u\in H^1(D)} F_{\mu_\text{lim}}(u) = \lim_{n\to +\infty} \inf_{u\in H^1(D)} F_{\infty_{K_n}}(u) =: \lim_{n\to +\infty} E(\infty_{K_n}). \]

	In addition, by Theorem \ref{thm:problem_1:convergence_capacity} and unicity of the limit, we have 

	\[ \lim_{n\to+\infty} E(\infty_{K_n}) - \beta\capop_\nu(\infty_{K_n}) = G(\mu_\text{lim}) = E(\mu_\text{lim}) - \beta\capop_\nu(\mu_\text{lim}). \]

	Thus, we have



	\begin{align*}
		\sup_{K\in\mathcal{K}_\delta(D)} \big( E(\infty_K) - \beta\capop_\nu(\infty_K) \big) & = \lim_{n\to+\infty} E(\infty_{K_n}) - \beta\capop_\nu(\infty_{K_n}) = E(\mu_\text{lim}) - \beta\capop_\nu(\mu_\text{lim}) \\
		& = \max_{\mu\in\mathcal{M}_0^\delta(D)}\big( E(\mu) - \beta\capop_\nu(\mu) \big).
	\end{align*}
\end{proof}

\begin{note}
	The result above gives us the following information : for every maximizing sequence of sets in $\mathcal{K}_\delta(D)$ of

	\[ \min_{u\in H^1(D)} \alpha\int_D |\nabla u|^2\ dx + \int_D u^2\ dx + \int_D (u-f)^2\ d\infty_K - \beta\capop_\nu(\infty_K), \]

	which corresponds to our original shape optimization problem \eqref{pb:problem_1_opt_no_constraint} where $m$ is the capacity, one can extract a converging subsequence which is the solution of the relaxed problem 

	\[ \max_{\mu\in\mathcal{M}_0^\delta(D)} \min_{u\in H^1(D)} \alpha\int_D |\nabla u|^2\ dx + \int_D u^2\ dx + \int_D (u-f)^2\ d\mu - \beta\capop_\nu(\mu). \]
\end{note}

In the next two sections, we aim to find a way to construct the optimal set $K$.
\section{Topological Gradient}
\label{sec:problem_1:topo_grad}

Here, we aim to compute the solution of our optimization problem \eqref{pb:problem_1_opt_no_constraint} by using a topological gradient based algorithm as in \cite{Larnier2012, doi:10.1137/S0363012900369538}. This kind of algorithm consists in starting with $K = \bar{D}$ and determine how making small holes in $K$ affect the cost functional, in order to find the balls which have the most decreasing effect. To this end, let us define $K_\varepsilon$ the compact set $K\setminus B(x_0,\varepsilon)$ where $B(x_0,\varepsilon)$ is the ball centered in $x_0\in D$ with radius $\varepsilon>0$ such that $B(x_0,\varepsilon)\subset K$. Let us denote by $j$ the functional \[ j : A\in\mathcal{P}(D) \mapsto \min_{u\in H^1(D), u=f\ \text{in}\ A} \frac{\alpha}{2}\int_D |\nabla u|^2\ dx + \frac{1}{2}\int_D u^2\ dx. \]
Finally, we denote by $u_\varepsilon$ the minimizer of $j(K_\varepsilon)$. Then, we have \\

\begin{proposition} With notations from above, we have when $\varepsilon$ tends to $0$,
	\[ j(K_\varepsilon) - j(K) = \big(f(x_0)-\alpha\Delta f(x_0)\big)^2\pi\varepsilon^4\ln(\varepsilon) + O(\varepsilon^4). \]
	\label{prop:topologicalGradient}
\end{proposition}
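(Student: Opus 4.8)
The plan is to reduce everything to the behaviour, as $\varepsilon\to 0$, of the corrector $v_\varepsilon:=u_\varepsilon-u_K$, where $u_K$ is the minimiser defining $j(K)$. First I would localise $v_\varepsilon$ to the small ball: on $K_\varepsilon$ both $u_\varepsilon$ and $u_K$ equal $f$, so $v_\varepsilon=0$ there, and on $D\setminus K$ both functions solve $-\alpha\Delta u+u=0$ with the same homogeneous Neumann datum on $\partial D$ and the same Dirichlet trace $f$ on $\partial K$ (unaffected by removing $B(x_0,\varepsilon)$, which lies well inside $K$), so $u_\varepsilon=u_K$ on $D\setminus K$ by uniqueness. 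Since $B(x_0,\varepsilon)$ is open and contained in $K$, its boundary is contained in $K_\varepsilon$, whence $v_\varepsilon\in H^1_0(B(x_0,\varepsilon))$; and as $u_K=f$ on $B(x_0,\varepsilon)$ while $u_\varepsilon$ solves the homogeneous equation there, $v_\varepsilon$ is the unique weak solution of
\[
-\alpha\Delta v_\varepsilon+v_\varepsilon=-\big(f-\alpha\Delta f\big)\quad\text{in }B(x_0,\varepsilon),\qquad v_\varepsilon=0\ \text{on }\partial B(x_0,\varepsilon),
\]
the right-hand side being as smooth as $f$ by interior elliptic regularity, since $u_K=f$ near $x_0$.

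Next I would exploit the quadratic structure of $j$. Writing $\mathcal{F}(u)=\tfrac{\alpha}{2}\int_D|\nabla u|^2\,dx+\tfrac12\int_D u^2\,dx$ and $a(\cdot,\cdot)$ for the associated symmetric bilinear form, one has $j(K_\varepsilon)-j(K)=\mathcal{F}(u_K+v_\varepsilon)-\mathcal{F}(u_K)=a(u_K,v_\varepsilon)+\tfrac12\,a(v_\varepsilon,v_\varepsilon)$. Since $v_\varepsilon$ is supported in $B(x_0,\varepsilon)$, where $u_K=f$, an integration by parts gives $a(u_K,v_\varepsilon)=\int_{B(x_0,\varepsilon)}(f-\alpha\Delta f)\,v_\varepsilon\,dx$, and testing the equation for $v_\varepsilon$ against $v_\varepsilon$ gives $a(v_\varepsilon,v_\varepsilon)=-\int_{B(x_0,\varepsilon)}(f-\alpha\Delta f)\,v_\varepsilon\,dx$. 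Hence the clean identity
\[
j(K_\varepsilon)-j(K)=\frac12\int_{B(x_0,\varepsilon)}\big(f-\alpha\Delta f\big)\,v_\varepsilon\,dx,
\]
and the proposition is reduced to the asymptotics of this single integral.

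To estimate it I would rescale, putting $x=x_0+\varepsilon y$ and $V_\varepsilon(y):=v_\varepsilon(x_0+\varepsilon y)$ on the unit ball, and Taylor-expand $g:=f-\alpha\Delta f$ about $x_0$, so $g(x_0+\varepsilon y)=g(x_0)+\varepsilon\nabla g(x_0)\cdot y+O(\varepsilon^2)$. The rescaled equation $-\alpha\varepsilon^{-2}\Delta V_\varepsilon+V_\varepsilon=-g(x_0+\varepsilon y)$ on $B(0,1)$ with $V_\varepsilon=0$ on $\partial B(0,1)$ is solved explicitly, for the dominant constant source, in terms of radial modified Bessel functions, the higher Taylor coefficients of $g$ contributing only to the remainder (by parity the linear term does not enter the integral at leading order); equivalently one represents $v_\varepsilon$ as a convolution against the fundamental solution of $-\alpha\Delta+1$, whose logarithmic singularity at the origin, in dimension two, is what produces the factor $\ln\varepsilon$. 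Substituting the resulting expansion of $v_\varepsilon$ into the energy identity and integrating over $B(x_0,\varepsilon)$ then yields $j(K_\varepsilon)-j(K)=\big(f(x_0)-\alpha\Delta f(x_0)\big)^2\pi\varepsilon^4\ln(\varepsilon)+O(\varepsilon^4)$.

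The localisation step and the energy identity are the soft part; the technical heart is the asymptotic analysis of $v_\varepsilon$. There one must justify the change of scale with quantitative error control — the maximum-principle bound $\|v_\varepsilon\|_{L^\infty(B(x_0,\varepsilon))}=O(\varepsilon^2)$ and uniform elliptic estimates for $V_\varepsilon$ on the fixed unit ball are the natural tools — and, crucially, push the expansion of $v_\varepsilon$ one order beyond the term of interest, so that one separates a clean $\varepsilon^4\ln\varepsilon$ term from a genuine $O(\varepsilon^4)$ remainder rather than merely $o(\varepsilon^4\ln\varepsilon)$. One also has to verify that the sub-leading Taylor coefficients of $f-\alpha\Delta f$ at $x_0$, and the interaction of $v_\varepsilon$ with the regular part $u_K$, affect only the $O(\varepsilon^4)$ term. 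Correctly identifying the leading-order behaviour — the power of $\varepsilon$, the logarithm, and the constant in front — is where the real difficulty lies.
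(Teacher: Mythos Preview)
Your proposal is correct and follows essentially the same route as the paper. You make the localisation $v_\varepsilon\in H^1_0(B(x_0,\varepsilon))$ explicit (the paper leaves it implicit), but you arrive at the identical energy identity $j(K_\varepsilon)-j(K)=\tfrac12\int_{B(x_0,\varepsilon)}(f-\alpha\Delta f)(u_\varepsilon-f)\,dx$ and then, like the paper (in a separate appendix), reduce the asymptotic expansion of the integral to a Green-function/modified Bessel computation on the small ball.
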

\begin{proof}
 	\[ j(K_\varepsilon) - j(K) = \frac{\alpha}{2}\int_{B(x_0,\varepsilon)} |\nabla u_\varepsilon|^2\ dx + \frac{1}{2}\int_{B(x_0,\varepsilon)} u_\varepsilon^2\ dx - \frac{\alpha}{2}\int_{B(x_0,\varepsilon)} |\nabla f|^2\ dx - \frac{1}{2}\int_{B(x_0,\varepsilon)} f^2\ dx. \]

	The weak formulation of Problem \ref{pb:problem_1} leads to 

	\[ j(K_\varepsilon) - j(K) = \frac{\alpha}{2}\int_{B(x_0,\varepsilon)} \nabla u_\varepsilon\cdot\nabla f\ dx + \frac{1}{2}\int_{B(x_0,\varepsilon)} u_\varepsilon f\ dx - \frac{\alpha}{2}\int_{B(x_0,\varepsilon)} |\nabla f|^2\ dx - \frac{1}{2}\int_{B(x_0,\varepsilon)} f^2\ dx. \] 
	Thus
	\begin{align*}
		j(K_\varepsilon) - j(K) & = \frac{\alpha}{2}\int_{B(x_0,\varepsilon)} \nabla (u_\varepsilon-f)\cdot\nabla f\ dx  + \frac{1}{2}\int_{B(x_0,\varepsilon)} (u_\varepsilon-f)\, f\ dx \\
		& = -\frac{\alpha}{2}\int_{B(x_0,\varepsilon)}  (u_\varepsilon-f)\,\Delta f\ dx + \frac{1}{2}\int_{B(x_0,\varepsilon)} (u_\varepsilon-f)\, f\ dx  \\
		& = \frac{1}{2}\int_{B(x_0,\varepsilon)} (u_\varepsilon-f)\,(f-\alpha\Delta f)\ dx 
	\end{align*}
	
    We have $(f-\alpha\Delta f(x)) = (f-\alpha\Delta f)(x_0) + \Vert x - x_0\Vert O(1)$, and hence
    
    \[ j(K_\varepsilon) - j(K)= (f-\alpha\Delta f)(x_0)\int_{B(x_0,\varepsilon)} u_\varepsilon-f\ dx + \varepsilon\,O(1)\int_{B(x_0,\varepsilon)} u_\varepsilon-f\ dx. \]
    
    It is enough to compute the fundamental term in the asymptotic development of the expression $\int_{B(x_0,\varepsilon)} u_\varepsilon-f\ dx$. This is done by using Proposition \ref{prop:appendix:int_calculus} with $w=u_\varepsilon-f$ and $g=-f+\alpha\Delta f$.
\end{proof}

    Since for $\varepsilon < 1$, $\ln\varepsilon < 0$, the result above suggests to keep the points $x_0$ where $\big(f-\alpha\Delta f(x_0)\big)^2$ is maximal, when $\varepsilon$ small enough. From a practical point of view, this is the main result of our local shape analysis. In the next section, we will see that such strict threshold rule might be relaxed. 

	The filter $f-\alpha\Delta f$ is known in image enhancement as a basic filter for image enhancement. It aims to produce more contrasted images. In \cite{ref2}, the use of the $H^1$-semi-norm gives more importance to the laplacian of $f$,and the criterion leads to select only the pixels close to the edges. In our approach, it appears that on one side also pixels ``far'' from the edges may be selected and the neighborhood of the edges is more efficiently restricted with the image enhancement. 

	The mask creation involve the computation of $\Delta f$, which is very sensitive to noise. That is why it is better in practice to smooth the image $f$ before to reduce image noise. 
    %
    %
    %
    %
    %
    %
\section{Optimal Distribution of Pixels : The ``Fat Pixels'' Approach}
\label{sec:problem_1:optimal_distrib}

In this section, we change our point of view, by considering ``fat pixels'' instead of a general set of interpolation points. In the sequel, we will follow \cite{ref2, ref7}. We restrict our class of admissible sets as an union of balls which represent pixels. For $m>0$ and $n\in\N$, we define

\[ \mathcal{A}_{m,n} := \Big\{ \overline{D}\cap\bigcup_{i=1}^n\overline{B(x_i,r)}\ \Big|\ x_i\in D_r,\ r=mn^{-1/2} \Big\}, \]

where $D_r$ is the $r$-neighbourhood of $D$. The following analysis remains unchanged in $\mathbb{R}^d$, but for the sake of simplicity we restrict ourself to the case $d=2$. We consider problem \eqref{pb:problem_1_opt_no_constraint} for every $K\in\mathcal{A}_{m,n}$ i.e. 

\begin{equation}
	\min_{K\in\mathcal{A}_{m,n}}\Big\{ \frac{1}{2} \int_D (u_K-f)^2\ dx + \frac{\alpha}{2}\int_D |\nabla u_K-\nabla f|^2\ dx\ \Big|\ u_K\ \text{solution of Problem}\ \ref{pb:problem_1} \Big\}.
	\label{pb:problem_1_opt_no_constraint_pixel}
\end{equation}

Here, we do not need to specified a size constraint on our admissible domains. Indeed, imposing $K\in\mathcal{A}_{m,n}$ implies a volume constraint and a geometrical constraint on $K$ since $K$ is formed by a finite number of balls with radius $mn^{-1/2}$. We set $v_K := u_K - f$. Then, $v_K$ is in $H^1(D)$ and satisfies \\

\begin{problem} Find $v$ in $H^1(D)$ such that
	\begin{equation}
		\left\{\begin{array}{rl}
			- \alpha \Delta v + v = g, & \text{in}\ D\setminus K, \\
			v = 0, & \text{in}\ K, \\
			\frac{\partial v}{\partial \mathbf{n}} = 0 & \text{on}\ \partial D, \\
		\end{array}\right .\label{eq:problem_1:v}
	\end{equation}
	\label{pb:problem_1:v}
\end{problem}

where $g := -f+\alpha\Delta f$.

The optimization problem \eqref{pb:problem_1_opt_no_constraint_pixel} can be reformulated as a compliance optimization problem

\begin{equation}
    \label{eq:compliance_pb}
    \min_{K\in\mathcal{A}_{m,n}}\Big\{ \frac{1}{2} \int_D g v_K\ dx\ \Big|\ v_K\ \text{solution of Problem}\ \ref{pb:problem_1:v} \Big\}.
\end{equation}

We deal with Neumann boundary conditions on $D$. However, it is possible to cover the boundary with $\frac{2C_{D}}{m}n^{1/2}$ balls so that we have formally homogeneous Dirichlet boundary conditions on $D$. The well-posedness of such problem have been studied in the laplacian case in \cite{ref7}. Without significant, change we have \\

\begin{theorem}
	If $D$ is an open bounded subset of $\R^2$ and if $g\geq 0$ is in $L^2(D)$, then problem \eqref{eq:compliance_pb} admit a unique solution.
\end{theorem}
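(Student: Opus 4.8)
The plan is to recast \eqref{eq:compliance_pb} as the minimisation of a continuous function over a compact finite-dimensional parameter set and then apply the direct method. Since the radius $r = mn^{-1/2}$ is fixed, a set $K\in\mathcal{A}_{m,n}$ is completely determined by the $n$-tuple of centres $\mathbf{x} = (x_1,\dots,x_n)$, which ranges over the compact set $\overline{D_r}^{\,n}\subset\R^{2n}$ (allowing centres on $\partial D_r$ is immaterial by continuity); I write $K(\mathbf{x})$ for the corresponding union of closed balls intersected with $\overline D$ and $H^1_{K}(D) := \{v\in H^1(D):\ v = 0\ \text{q.e. on}\ K\}$, a closed subspace of $H^1(D)$. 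For each $\mathbf{x}$ the bilinear form $a(v,w) = \alpha\int_D\nabla v\cdot\nabla w\ dx + \int_D vw\ dx$ is continuous and coercive on all of $H^1(D)$ — the zeroth-order term renders the Neumann conditions harmless, whereas in the pure-Laplacian setting of \cite{ref7} coercivity is restored by the extra boundary layer of $O(n^{1/2})$ balls — so Lax--Milgram applied on $H^1_{K(\mathbf{x})}(D)$ produces a unique $v_{K(\mathbf{x})}$ solving Problem \ref{pb:problem_1:v}; since $g\geq 0$ the maximum principle gives $v_{K(\mathbf{x})}\geq 0$, and testing the weak formulation against $v_{K(\mathbf{x})}$ yields $\int_D g\,v_{K(\mathbf{x})}\ dx = \alpha\int_D|\nabla v_{K(\mathbf{x})}|^2\ dx + \int_D v_{K(\mathbf{x})}^2\ dx\geq 0$. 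Hence $\mathcal{C}(\mathbf{x}) := \tfrac12\int_D g\,v_{K(\mathbf{x})}\ dx$ is a well-defined nonnegative function on $\overline{D_r}^{\,n}$, and $-\mathcal{C}(\mathbf{x}) = \min_{v\in H^1_{K(\mathbf{x})}(D)}\big(\tfrac{\alpha}{2}\int_D|\nabla v|^2\ dx + \tfrac12\int_D v^2\ dx - \int_D g v\ dx\big)$.

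The crux is the continuity of $\mathbf{x}\mapsto\mathcal{C}(\mathbf{x})$. Given $\mathbf{x}^j\to\mathbf{x}$, I would show $v_{K(\mathbf{x}^j)}\to v_{K(\mathbf{x})}$ strongly in $H^1(D)$. Each $K(\mathbf{x}^j)$ is a union of at most $n$ closed balls of the \emph{same fixed radius} $r>0$ with converging centres, so the $K(\mathbf{x}^j)$ converge to $K(\mathbf{x})$ in Hausdorff distance, and, because balls of a fixed radius satisfy a uniform capacity-density (Wiener) condition, the subspaces $H^1_{K(\mathbf{x}^j)}(D)$ converge to $H^1_{K(\mathbf{x})}(D)$ in the sense of Mosco: (i) any $H^1$-bounded sequence $w_j\in H^1_{K(\mathbf{x}^j)}(D)$ has all its weak limit points in $H^1_{K(\mathbf{x})}(D)$, the constraint ``$v=0$ q.e.'' passing to the limit because the relevant capacities do not collapse as the centres move; (ii) every $w\in H^1_{K(\mathbf{x})}(D)$ is the strong $H^1$-limit of some $w_j\in H^1_{K(\mathbf{x}^j)}(D)$ built by local truncation/translation near each ball. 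Combined with the equicoercivity and strict convexity of the energy, this yields convergence of the constrained minima and of the minimisers themselves; this is precisely the reasoning of \cite{ref7} (see also the $\gamma$-convergence framework of \cite{ref2} and Theorem 7.8 in \cite{ref12}), which carries over \emph{without significant change} because the added term $\int_D v^2$ only strengthens coercivity.

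With continuity in hand, $\mathcal{C}$ attains its minimum on the compact set $\overline{D_r}^{\,n}$, giving an optimal $K\in\mathcal{A}_{m,n}$; for this $K$ the state $v_K$ is unique by Lax--Milgram, and the uniqueness asserted in the statement is the one already proved for the corresponding Laplacian compliance problem in \cite{ref7}, to which I refer. I expect the Mosco-convergence step to be the only genuinely delicate point: one must check that finite unions of \emph{equal-radius} balls cannot lose capacity in the limit — which is exactly why the radius is frozen along the sequence and why the admissible class $\mathcal{A}_{m,n}$ uses $r = mn^{-1/2}$ rather than arbitrarily small sets — and that the collision of two or more centres does no harm, since it merely lowers the number of effective components while the set still converges.
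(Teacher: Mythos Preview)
Your proposal is correct and matches the paper's approach: the paper gives no self-contained proof but simply states that the well-posedness follows from \cite{ref7} ``without significant change,'' and what you have written is precisely the argument of \cite{ref7} (parametrise by centres, prove continuity of the compliance via Mosco/$\gamma$-convergence of the obstacle subspaces under Hausdorff convergence of equal-radius balls, conclude by compactness) with the only adaptation being the harmless zeroth-order term. Your caveat about the Mosco step and about uniqueness being inherited from \cite{ref7} is exactly the level of detail the paper itself provides.
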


If we denote by $K_n^\text{opt}$ the solution, then we have that $\infty_{K_n^\text{opt}}$ $\gamma$-converge to $\infty_D$ as $n$ tends to $+\infty$. However, the numbers of pixels $x_0$ in $D$ to keep goes also to infinity. Thus, it gives no relevant information on the distribution of the points to retain. As pointed out in \cite{ref1}, the local density of $K_n^\text{opt}$ can be obtained by using a different topology for the $\Gamma$-convergence of the rescaled energies. In this new frame, the minimizers are unchanged but their behavior is seen from a different point of view. We define the probability measure $\mu_K$ for a given set $K$ in $\mathcal{A}_{m,n}$ by

\[ \mu_K := \frac{1}{n}\sum_{i=1}^n \delta_{x_i}. \]

We define the functional $F_n$ from $\mathcal{P}(\bar D)$ into $[0,+\infty]$ by

\[ F_n(\mu) := \begin{cases} n\int_D gv_K\ dx &,\ \text{if}\ \exists K\in\mathcal{A}_{m,n},\ \text{s.t.}\ \mu=\mu_K, \\
+\infty&,\ \text{otherwise}. \end{cases} \]

\vspace{0.2cm}

The following $\Gamma$-convergence of $F_n$ theorem is similar to the one given in Theorem 2.2. in \cite{ref7}. \\

\begin{theorem}
    \label{thm:g-convergence}
	If $g\geq 0$, then the sequence of functionals $F_n$, defined above, $\Gamma$-converge with respect to the weak $\star$ topology in $\mathcal{P}(\bar{D})$ to 
	\[ F(\mu) := \int_D \frac{g^2}{\mu_a}\theta(m\mu_a^{1/2})\ dx, \]
	where $\mu = \mu_a dx + \nu$ is the Radon-Nikodym-Lebesgue decomposition of $\mu$ (\cite{ref13}, Theorem 3.8) with respect to the Lebesgue measure and
	\[ \theta(m) := \inf_{K_n\in\mathcal{A}_{m,n}} \liminf_{n\to +\infty} n \int_D gv_{K_n}\ dx, \]
	$v_{K_n}$ solution of Problem \ref{pb:problem_1:v}.
\end{theorem}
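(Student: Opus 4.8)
The plan is to follow the scheme of the proof of Theorem~2.2 in \cite{ref7}, adapting it to the Helmholtz-type operator $-\alpha\Delta+\mathrm{Id}$ and to the nonnegative source $g$, establishing the two $\Gamma$-convergence inequalities separately. First I would record the uniform bounds that give equicoercivity: for every $K\in\mathcal{A}_{m,n}$, testing Problem~\ref{pb:problem_1:v} against $v_K$ gives $\alpha\int_D|\nabla v_K|^2\,dx+\int_D v_K^2\,dx=\int_D g v_K\,dx\leq\|g\|_{L^2(D)}\|v_K\|_{L^2(D)}$, hence $\|v_K\|_{H^1(D)}\leq C\|g\|_{L^2(D)}$ uniformly in $n$ and $K$, and $F_n(\mu_K)=n\int_D g v_K\,dx\geq 0$. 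Since $g\geq 0$, the maximum principle gives $v_K\geq 0$, whence the monotonicity $K\subseteq K'\Rightarrow v_{K'}\leq v_K$ pointwise. As $\mathcal{P}(\bar D)$ with the weak~$\star$ topology is compact and metrizable, from any $(\mu_{K_n})_n$ with bounded energy we can extract a weakly~$\star$ convergent subsequence.

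For the $\liminf$ inequality, let $\mu_{K_n}\rightharpoonup^\star\mu$ with $\liminf_n F_n(\mu_{K_n})<+\infty$; pass to a subsequence realizing the liminf and set $v_n:=v_{K_n}$, bounded in $H^1(D)$, so that $v_n\rightharpoonup v$ in $H^1(D)$ up to a subsequence. For an open set $A\subseteq D$ put $\lambda(A):=\liminf_n n\int_A g v_n\,dx$. The first task is the De~Giorgi--Letta step: using the uniform bounds and the monotonicity above one shows that $\lambda$ is inner regular and both super- and sub-additive, hence the restriction to open sets of a positive Borel measure; the zeroth-order term $\int v_n^2$ causes no difficulty since it is nonnegative and localizes. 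The second task is to compute the Lebesgue density of $\lambda$. Fixing a Lebesgue point $x_0$ of $\mu_a$ with $\mu_a(x_0)>0$ (and of $g$) and blowing up at the cell scale $\varepsilon_n:=(n\,\mu_a(x_0))^{-1/2}\to 0$, the ball radius $mn^{-1/2}$ becomes $m\,\mu_a(x_0)^{1/2}$ in the rescaled picture and the rescaled equation reads $-\alpha\Delta\tilde v_n+\varepsilon_n^2\tilde v_n=\varepsilon_n^2 g$, so the zeroth-order term is negligible at this scale and the source localizes to the constant $g(x_0)$; thus the cell analysis reduces to the Laplacian case of \cite{ref7}. Comparing the rescaled compliance with the infimum defining $\theta(m\,\mu_a^{1/2})$ yields $\lambda(A)\geq\int_A \frac{g^2}{\mu_a}\,\theta(m\,\mu_a^{1/2})\,dx$; on $\{\mu_a=0\}$ boundedness of $\lambda$ forces $g=0$ a.e., so the right-hand side is finite there, and summing over a fine partition of $D$ gives $\liminf_n F_n(\mu_{K_n})\geq F(\mu)$.

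For the $\limsup$ inequality I would construct a recovery sequence. By a standard truncation and piecewise-constant approximation of $\mu_a$ — using the structural properties of $\theta$ from \cite{ref7} and the fact that the singular part $\nu$ and the zero set of $\mu_a$ only increase $F$ — it suffices to treat $\mu=\mu_a\,dx$ with $\mu_a$ piecewise constant and bounded below by a positive constant on a finite family of cubes covering $D$; a diagonal argument then removes these reductions. On a cube $Q$ with $\mu_a\equiv c$, distribute in $Q$ the $\lfloor nc\,|Q|\rfloor$ points of the lattice $(nc)^{-1/2}\Z^2$ lying there, and let $K_n$ be the intersection with $\bar D$ of the union of the balls of radius $mn^{-1/2}$ around these points, together with the $O(n^{1/2})$-ball boundary layer covering $\partial D$ mentioned before the statement; then $K_n\in\mathcal{A}_{m,n}$ and $\mu_{K_n}\rightharpoonup^\star\mu$. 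Away from a vanishing boundary layer this is the periodic configuration whose rescaled cell solution realizes $\theta$, so a periodic-homogenization comparison gives $n\int_Q g v_{K_n}\,dx\to\int_Q \frac{g^2}{\mu_a}\,\theta(m\,\mu_a^{1/2})\,dx$, the interface errors between cubes and the boundary-layer contribution being $o(1)$ because the associated $H^1$ corrections have vanishing energy. Summing over cubes and refining the partition gives $\limsup_n F_n(\mu_{K_n})\leq F(\mu)$.

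The main obstacle is the lower bound: proving that $A\mapsto\liminf_n n\int_A g v_n\,dx$ is the trace of a measure via De~Giorgi--Letta, and then making the blow-up rigorous — controlling the replacement of the variable $g$ by $g(x_0)$, and showing that the zeroth-order term $\int v_n^2$, absent in \cite{ref7}, is genuinely negligible at the cell scale $n^{-1/2}$ so that $\theta$ coincides with the Laplacian cell quantity there. The recovery sequence is comparatively routine once the boundary layer near $\partial D$ is dealt with as in the remark preceding the statement.
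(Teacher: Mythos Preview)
The paper does not actually give a proof of this theorem: it simply states that the result ``is similar to the one given in Theorem~2.2 in \cite{ref7}'' and then moves on to the estimates for $\theta$. Your proposal is precisely an adaptation of the Buttazzo--Santambrogio--Varchon scheme in \cite{ref7} to the operator $-\alpha\Delta+\mathrm{Id}$, so it is exactly the route the paper points to, only spelled out in detail; in that sense it agrees with the paper's (implicit) proof.

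Two minor remarks. First, the paper's displayed definition of $\theta$ still carries the generic $g$ and $D$, but in Appendix~A (and in \cite{ref7}) the quantity is computed with $g\equiv 1$ on the unit square; your blow-up argument, which replaces $g$ by the constant $g(x_0)$ and factors out $g(x_0)^2$, is the correct way to reconcile these. Second, the sentence ``on $\{\mu_a=0\}$ boundedness of $\lambda$ forces $g=0$ a.e.'' is placed in the $\liminf$ step, but nothing needs to be forced there: if $g>0$ on a set of positive measure where $\mu_a=0$ one simply gets $F(\mu)=+\infty$, and the $\liminf$ inequality is trivial. That observation is only needed in the $\limsup$ step, to justify the reduction to $\mu_a$ bounded away from zero when $F(\mu)<+\infty$.
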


As a consequence of the $\Gamma$-convergence stated in the theorem above, the empirical measure $\mu_{K_n^\text{opt}} \to \mu^\text{opt}$ weak $\star$ in $\mathcal{P}(\R^d)$ where $\mu^\text{opt}$ is a minimizer of $F$. Unfortunately, the function $\theta$ is not known explicitly. We establish here after that $\theta$ is positive, non increasing and vanish after some point which will be enough for practical exploration. The next theorem gives an estimate of the function $\theta$ defined above. The proof is given in Appendix \ref{appendix:theta}. \\

\begin{theorem}
	We have, for $m$ in $(0,t_1)$, \[ C_1(\alpha)|\ln(m)| - C_2(\alpha) \leq \theta(m) \leq C_3(\alpha)|\ln(m)|, \]

	where $C_1$, $C_2$ and $C_3$ are constants depending on $\alpha$.
\end{theorem}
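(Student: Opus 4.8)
The plan is to squeeze $\theta(m)$ between two multiples of $|\ln m|$ by reducing it to an explicit model cell problem. After the rescaling behind Theorem~\ref{thm:g-convergence}, $\theta(m)$ measures the cost of one ball of radius $m$ sitting in a cell of unit density, so I would first argue that $\theta(m)$ is comparable, with multiplicative and additive constants depending only on $\alpha$, to $\Theta(m):=\alpha\int_Q|\nabla\psi_m|^2\,dy=\int_Q\psi_m\,dy$, where $Q=(-\tfrac12,\tfrac12)^2$ and $\psi_m\in H^1(Q)$ solves
\[
-\alpha\Delta\psi_m=1\ \text{in}\ Q\setminus\overline{B(0,m)},\qquad \psi_m=0\ \text{on}\ \overline{B(0,m)},\qquad \psi_m\ Q\text{-periodic},
\]
the zeroth–order term $+v$ of Problem~\ref{pb:problem_1:v} being negligible at the cell scale $n^{-1/2}\to0$. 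The comparison is the same two–sided estimate already used to prove Theorem~\ref{thm:g-convergence} (as in \cite{ref7}): for $\theta(m)\le C\,\Theta(m)$ one tests the compliance \eqref{eq:compliance_pb} with the periodic configuration $K_n$ and the periodised profile built from $\psi_m$, using $0\le v_{K_n}\le$ (periodised cell solution) by the maximum principle; for $\theta(m)\ge c\,\Theta(m)$ one partitions $D$ into the cells attached to the $x_i$ and applies, on each perforated cell, a Poincaré inequality whose constant is controlled by the inverse of the capacity of $B(0,m)$ in $Q$.

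It then remains to prove $c_1|\ln m|-c_2\le\Theta(m)\le c_3|\ln m|$ for $m\in(0,t_1)$ with absolute constants. Multiplying the cell equation by $\psi_m$ gives $\int_Q|\nabla\psi_m|^2=\tfrac1\alpha\,\Theta(m)$, and $\psi_m\ge0$ by the maximum principle. For the \emph{upper bound} I would use the classical two–dimensional logarithmic capacity estimate: with $z_m$ the capacitary potential of $B(0,m)$ in a fixed disc $B(0,\rho_0)\subset Q$ (so $\int|\nabla z_m|^2\sim 2\pi/|\ln m|$), pairing $\nabla\psi_m$ with $\nabla z_m$ on the annulus and passing from the circular mean of $\psi_m$ on $\partial B(0,\rho_0)$ to its mean on $Q$ by Poincaré--Wirtinger yields $\Theta(m)^2\le \tfrac{C}{\alpha}(1+|\ln m|)\,\Theta(m)$, hence $\Theta(m)\le (C/\alpha)|\ln m|$ for $m$ small. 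For the \emph{lower bound} I would compute the flux directly: $Q$-periodicity forces zero net flux through $\partial Q$, so the outward flux through $\partial B(0,r)$ equals $\tfrac1\alpha(1-\pi r^2)$ for $0<r<\tfrac12$; integrating the radial derivative from $m$ to $r$ and using $\psi_m|_{\partial B(0,m)}=0$ gives the circular average
\[
\frac{1}{2\pi r}\int_{\partial B(0,r)}\psi_m\,ds=\frac{1}{2\pi\alpha}\ln\frac rm-\frac{r^2-m^2}{4\alpha},
\]
and integrating this once more against $2\pi r\,dr$ over $(m,R_0)$ for a fixed $R_0\in(0,\tfrac12)$ gives $\Theta(m)\ge\int_{B(0,R_0)\setminus B(0,m)}\psi_m\,dy\ge \tfrac{R_0^2}{2\alpha}|\ln m|-c_2(\alpha)$.

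The genuine obstacle is not these one–dimensional computations on the model cell but the reduction step: making precise, uniformly over all admissible configurations $K_n\in\mathcal A_{m,n}$, that $\theta(m)$ is pinched between multiples of $\Theta(m)$. This requires (i) the boundary correction already alluded to in the text — covering a neighbourhood of $\partial D$ with $O(n^{1/2})$ extra balls of the same radius so that the Neumann condition may be replaced by a homogeneous Dirichlet one — together with control of the error made by freezing $g$ on each cell of size $n^{-1/2}$; and (ii) for the lower bound, handling configurations in which the balls are not uniformly spread, where one invokes the $\Gamma$-convergence of Theorem~\ref{thm:g-convergence} and the weak-$\star$ lower semicontinuity of $\mu\mapsto\int_D g^2\mu_a^{-1}\theta(m\mu_a^{1/2})$ to reduce to the uniform density $\mu_a\equiv\mathrm{const}$. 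All of this is the two–dimensional, $+u$–perturbed analogue of the estimates in \cite{ref7}; the extra term $+u$ does not affect the leading $|\ln m|$ precisely because it is lower order at the vanishing cell scale.
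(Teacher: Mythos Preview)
Your upper bound is essentially the paper's: test $\theta(m)$ with the periodic configuration of $n=k^2$ balls, reduce to a single cell, drop the zeroth--order term by the maximum principle (the solution of $-\alpha\Delta\tilde w=1$ with the same boundary data dominates), and compute on the annulus. The paper computes the explicit radial profile rather than invoking a capacity estimate, but the content is the same.

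For the lower bound the paper takes a genuinely different route and, in doing so, avoids the very step you flag as ``the genuine obstacle''. It never reduces to the periodic cell quantity $\Theta(m)$. Instead it works directly with an \emph{arbitrary} configuration $K_m=\bigcup_i\overline{B(x_i,mn^{-1/2})}$ in $I^2$ and uses two ingredients: (i) Green's formula $\int_{\partial U_m}\partial_n u^1_m\,d\mathcal H^1=\alpha^{-1}(\|u^1_m\|_{L^1}-|U_m|)$, followed by Cauchy--Schwarz on $\partial U_m$, to bound $\int_{\partial U_m}|\partial_n u^1_m|^2$ from below; and (ii) the Hadamard-type shape derivative
\[
-\frac{d}{dm}\Big(\int_{I^2} u^1_m\,dx\Big)=n^{-1/2}\int_{\partial U_m}|\partial_n u^1_m|^2\,d\mathcal H^1,
\]
together with $\mathcal H^1(\partial U_m)\le 2\pi m\sqrt n$. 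Combining (i) and (ii) gives a differential inequality $-n\,dF/dm\ge C_1(\alpha)m^{-1}-C_2(\alpha)$; integrating in $m$ from $m$ to $t_1$ yields $nF_m\ge C_1(\alpha)|\ln m|-C_2(\alpha)+nF_{t_1}$, and passing to the infimum over configurations and the $\liminf$ in $n$ gives the lower bound on $\theta(m)$ directly. Nothing about uniform spacing, Voronoi cells, or cell Poincar\'e constants is needed.

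So your outline is not wrong, but the reduction $\theta(m)\asymp\Theta(m)$ that you single out as the hard part is exactly what the paper's argument circumvents. Your proposed fix via the $\Gamma$-limit of Theorem~\ref{thm:g-convergence} is also delicate: $\theta$ already appears in the limit functional $F$, so using $F$ to pin down $\theta$ would have to be done with care to avoid circularity. The shape-derivative trick is the cleaner path here, and it is the one the paper (following \cite{ref7}) takes.
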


\begin{note}
    We can extend the results above to any $g$ since we may formally split the discussion on the sets $\{ g \geq 0 \}$ and $\{ g < 0 \}$.
\end{note} \vspace{0.2cm}

These estimates on $\theta$ suggest that to minimize $F$, when $g^2$ is large, $\mu_a$ should be large in order for $\theta$ to be close to its vanishing point, while when $g^2$ is small $\mu_a$ could be small. Formal Euler-Lagrange equation and the estimates on $\theta$ give the following information : to minimize 

\[ F(\mu) := \int_D \frac{g^2}{\mu_a}\theta(m\mu_a^{1/2})\ dx, \]

one have to take

\[ \frac{\mu_a^2}{|1-\log\mu_a|} \approx c_{m,f} (f-\alpha\Delta f)^2. \]

This introduces a soft-thresholding with respect to the first approach. To sum up, we can choose the interpolation data such that the pixel density is increasing with $g^2 = (f-\alpha\Delta f)^2$. This soft-thresholding rule can be enforced with a standard digital halftoning. According to \cite{ref2},\cite{ref3} and \cite{ref4}, we recall that digital halftoning is a method of rendering that convert a continuous image to a binary image, for example black and white image, while giving the illusion of color continuity. This color continuity is simulated for the human eye by a spacial distribution of black and white pixels. Two different kinds of halftoning algorithms exist : dithering and error diffusion halftoning. The first one is based on a so-called dithering mask function, while the other one is an algorithm which propagate the error between the new value ($0$ or $1$) and the old one (in the interval $[0,1]$) \cite{Floyd1975AnAA}. An ideal digital halftoning method would conserves the average value of gray while giving the illusion of color continuity.
\section{Time Dependent Compression Method}
\label{sec:time-dependent}

In this section, we propose an other method which depends on time. \\

\begin{problem} For $t$ in $(0,T]$, find $u(t,\cdot)$ in $H^1(D)$ such that
	\begin{equation}
		\left\{\begin{array}{rl}
			\partial_t u(t,\cdot) - \alpha\Delta u(t,\cdot) = 0, & \text{in}\ D\setminus K_t, \\
			u(t,\cdot) = f, & \text{in}\ K_t, \\
			\frac{\partial u}{\partial \mathbf{n}}(t,\cdot) = 0, & \text{on}\ \partial D, \\
		\end{array}\right .\label{eq:problem_1_time}
	\end{equation}
	
	and
	
	\begin{equation*}
		\left\{\begin{array}{rl}
		    & u(0,\cdot) \in L^2(D), \\
		    & K_0 \subset D.
		\end{array}\right .
	\end{equation*}
	
	\label{pb:problem_1_time}
\end{problem}

Using implicit scheme in time, \\

\begin{problem} For $n\in\N$, by knowing $u^n$, find $u^{n+1}$ in $H^1(D)$ such that
	\begin{equation}
		\left\{\begin{array}{rl}
			u^{n+1} - \delta t \alpha \Delta u^{n+1} = u^n, & \text{in}\ D\setminus K_n, \\
			u^{n+1} = f, & \text{in}\ K_n, \\
			\frac{\partial u^{n+1}}{\partial \mathbf{n}} = 0, & \text{on}\ \partial D, \\
		\end{array}\right .\label{eq:problem_1_time_implicit}
	\end{equation}
	
	and
	
	\begin{equation*}
		\left\{\begin{array}{rl}
		    & u^0 \in L^2(D), \\
		    & K_0 \subset D.
		\end{array}\right .
	\end{equation*}

	\label{pb:problem_1_time_implicit}
\end{problem}

The associated energy is

\[ J(u^{n+1}) := \frac{1}{2}\int_D |u^{n+1}|^2\ dx + \frac{\delta t \alpha}{2} \int_D |\nabla u^{n+1}|^2\ dx - \int_D u^n u^{n+1}\ dx. \]

Problem \ref{pb:problem_1_time_implicit} is very close to Problem \ref{pb:problem_1}. The differences are that we have a second member $u^n$ and that the known mask, namely $K_n$, depends on $n$. Since the inpainting mask in our model is time-dependent, we propose, in Section \ref{sec:problem_1:numerical_results}, various methods to construct $K_n$ for $n>0$.

\subsection{Problem Reformulation}

Now, we want to give a $\max$-$\min$ formulation, similar to Proposition \ref{prop:problem_1_opt_no_constraint:reformulation}, of the optimization problem \eqref{pb:problem_1_opt_no_constraint} with Problem \ref{pb:problem_1_time_implicit}. \\

\begin{proposition} Our optimization problem \eqref{pb:problem_1_opt_no_constraint} can be rewritten

    \[ \max_{K^n} \min_{u^{n+1}} \frac{1}{2}\int_D |u^{n+1}|^2 + \frac{\delta t \alpha}{2} \int_D |\nabla u^{n+1}|^2 - \int u^n u^{n+1}. \]
\end{proposition}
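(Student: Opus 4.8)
The plan is to mimic, essentially line for line, the proof of Proposition~\ref{prop:problem_1_opt_no_constraint:reformulation}, the only changes being that $\alpha$ is replaced by $\delta t\,\alpha$ and that an extra linear term $-\int_D u^n u^{n+1}\,dx$ is carried through the algebra. Here the cost functional is taken to be the time-step analogue of \eqref{eq:alpha_error}, namely $J^{n+1}(u):=\tfrac12\int_D (u-f)^2\,dx+\tfrac{\delta t\,\alpha}{2}\int_D|\nabla u-\nabla f|^2\,dx$; the solution $u^{n+1}$ of Problem~\ref{pb:problem_1_time_implicit} depends on $K^n$, and the inner $\min$ over $u^{n+1}$ carries the implicit constraint $u^{n+1}=f$ in $K^n$, exactly as in Proposition~\ref{prop:problem_1_opt_no_constraint:reformulation}. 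Since the optimisation is performed at a fixed step $n$, the datum $u^n$ is frozen and every quantity built only from $f$ and $u^n$ is a constant with respect to the maximising variable $K^n$.

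First I would record the variational characterisation: by strict convexity and coercivity, the functional $u\mapsto \tfrac12\int_D u^2\,dx+\tfrac{\delta t\,\alpha}{2}\int_D|\nabla u|^2\,dx-\int_D u^n u\,dx$ has a unique minimiser over the closed convex set $\{u\in H^1(D):u=f \text{ in } K^n\}$ (a Lax--Milgram argument identical to the one behind \eqref{eq:problem_1:laxmilgram}), and that minimiser is exactly the solution $u^{n+1}$ of Problem~\ref{pb:problem_1_time_implicit}, its Euler--Lagrange equation being the weak form of that problem. Hence the inner minimisation in the claimed right-hand side is attained at $u^{n+1}$, so the right-hand side equals $\max_{K^n}\big(\tfrac12\int_D (u^{n+1})^2\,dx+\tfrac{\delta t\,\alpha}{2}\int_D|\nabla u^{n+1}|^2\,dx-\int_D u^n u^{n+1}\,dx\big)$, and it remains to identify this with $\min_{K^n}J^{n+1}(u^{n+1})$, i.e.\ with \eqref{pb:problem_1_opt_no_constraint} for Problem~\ref{pb:problem_1_time_implicit}.

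The core computation then runs as in Proposition~\ref{prop:problem_1_opt_no_constraint:reformulation}: expand $J^{n+1}(u^{n+1})$, discard the $K^n$-independent terms $\tfrac12\int_D f^2\,dx$ and $\tfrac{\delta t\,\alpha}{2}\int_D|\nabla f|^2\,dx$, and eliminate the cross terms $\int_D u^{n+1}f\,dx$ and $\int_D\nabla u^{n+1}\cdot\nabla f\,dx$ by testing the weak formulation of Problem~\ref{pb:problem_1_time_implicit} against $v=u^{n+1}-f$ (admissible because $v\in H^1(D)$ and $v=0$ in $K^n$), which gives
\[ \int_D u^{n+1}f\,dx+\delta t\,\alpha\int_D\nabla u^{n+1}\cdot\nabla f\,dx=\int_D (u^{n+1})^2\,dx+\delta t\,\alpha\int_D|\nabla u^{n+1}|^2\,dx-\int_D u^n u^{n+1}\,dx+\int_D u^n f\,dx. \]
Substituting, the cost collapses to $J^{n+1}(u^{n+1})=-\big(\tfrac12\int_D (u^{n+1})^2\,dx+\tfrac{\delta t\,\alpha}{2}\int_D|\nabla u^{n+1}|^2\,dx-\int_D u^n u^{n+1}\,dx\big)+C_n$ with $C_n$ depending only on $f$ and $u^n$. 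Therefore $\min_{K^n}J^{n+1}(u^{n+1})$ is equivalent to $\max_{K^n}\big(\tfrac12\int_D (u^{n+1})^2+\tfrac{\delta t\,\alpha}{2}\int_D|\nabla u^{n+1}|^2-\int_D u^n u^{n+1}\big)$, which combined with the first step yields the stated $\max$--$\min$ formulation.

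The argument is routine algebra, so there is no serious obstacle; the step most easily gotten wrong is the substitution via the weak formulation, where one must use precisely the test function $v=u^{n+1}-f$ and must check it lies in the space of admissible variations — i.e.\ that $u^{n+1}=f$ holds on $K^n$ in the quasi-everywhere sense, which is automatic for the closed masks $K^n\subset D$ considered here — and one must remember that $u^n$, the output of the previous time step, is held fixed throughout the optimisation over $K^n$, so the terms it generates are constants. No new difficulty is introduced by the time dependence relative to Proposition~\ref{prop:problem_1_opt_no_constraint:reformulation}.
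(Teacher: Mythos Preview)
Your proof is correct and follows essentially the same route as the paper's own argument: expand the cost $J^{n+1}(u^{n+1})$, use the weak formulation of Problem~\ref{pb:problem_1_time_implicit} tested against $v=u^{n+1}-f$ to eliminate the cross terms, and flip the sign to pass from $\min$ to $\max$, then invoke the variational characterisation of $u^{n+1}$ to introduce the inner $\min$. If anything, you are more explicit than the paper about which test function is used and why the $u^n$-dependent terms are constants.
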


\subsection{Topological Gradient}

Like for Section \ref{sec:problem_1:topo_grad}, we use a topological gradient based algorithm to compute the solution of our optimization problem. Again, let us define $K_\varepsilon$ the compact set $K\setminus B(x_0,\varepsilon)$ where $B(x_0,\varepsilon)$ is the ball centered in $x_0\in D$ with radius $\varepsilon>0$ such that $B(x_0,\varepsilon)\subset K$. Let us denote by $j$ the functional, for $u^n$ in $H^1(D)$ given,

\[ j : A\in\mathcal{P}(D) \mapsto \min_{u\in H^1(D), u=f\ \text{in}\ A} \frac{\delta t \alpha}{2}\int_D |\nabla u|^2\ dx + \frac{1}{2}\int_D u^2\ dx - \int_D u^n u\ dx. \]

Finally, we denote by $u_\varepsilon$ the minimizer of $j(K_\varepsilon)$. Then, we have \\

\begin{theorem} With notations from above, we have when $\varepsilon$ tends to $0$,

    \[ j(K_\varepsilon) - j(K) = \big(f(x_0)-\delta t \alpha\Delta f(x_0) - u(x_0)\big)^2\pi\varepsilon^4\ln(\varepsilon) + O(\varepsilon^4). \]
\end{theorem}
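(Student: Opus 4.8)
The plan is to repeat the argument of Proposition \ref{prop:topologicalGradient}, the only new ingredient being the bookkeeping of the extra linear term $-\int_D u^n u\,dx$ in the energy. As in Section \ref{sec:problem_1:topo_grad}, one starts from $K=\overline{D}$, so that the minimizer of $j(\overline{D})$ is $u\equiv f$ and $j(\overline{D}) = \frac{\delta t \alpha}{2}\int_D|\nabla f|^2\,dx + \frac12\int_D f^2\,dx - \int_D u^n f\,dx$; here the datum denoted $u$ in the statement is the given function $u^n$ of Problem \ref{pb:problem_1_time_implicit}. Since the minimizer $u_\varepsilon$ of $j(K_\varepsilon)$ equals $f$ on $K_\varepsilon=\overline{D}\setminus B(x_0,\varepsilon)$, the function $v_\varepsilon:=u_\varepsilon-f$ lies in $H^1_0(B(x_0,\varepsilon))$ once extended by zero, and all contributions to $j(K_\varepsilon)-j(\overline{D})$ outside $B(x_0,\varepsilon)$ cancel, so this difference reduces to an integral over $B(x_0,\varepsilon)$ alone.

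Next I would use the weak formulation of Problem \ref{pb:problem_1_time_implicit} for $u_\varepsilon$, tested against $v_\varepsilon$, which expresses $\delta t \alpha\int_{B(x_0,\varepsilon)}|\nabla u_\varepsilon|^2\,dx + \int_{B(x_0,\varepsilon)}u_\varepsilon^2\,dx - \int_{B(x_0,\varepsilon)}u^n u_\varepsilon\,dx$ through $\nabla u_\varepsilon\cdot\nabla f$, $u_\varepsilon f$ and $u^n f$. Substituting this identity into the localized form of $j(K_\varepsilon)-j(\overline{D})$ makes all the purely quadratic terms cancel, exactly as in the stationary case; one integration by parts on $\int_{B(x_0,\varepsilon)}\nabla v_\varepsilon\cdot\nabla f$, whose boundary term vanishes because $v_\varepsilon\in H^1_0(B(x_0,\varepsilon))$, then yields
\[ j(K_\varepsilon)-j(\overline{D}) = \frac12\int_{B(x_0,\varepsilon)} v_\varepsilon\,\big(f-\delta t \alpha\Delta f - u^n\big)\,dx, \]
the exact analogue of the intermediate identity of Proposition \ref{prop:topologicalGradient} with $f-\alpha\Delta f$ replaced by $f-\delta t \alpha\Delta f - u^n$.

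Finally, expanding $\big(f-\delta t \alpha\Delta f - u^n\big)(x) = \big(f-\delta t \alpha\Delta f - u^n\big)(x_0) + \Vert x-x_0\Vert\,O(1)$ turns this into $\big(f-\delta t \alpha\Delta f - u^n\big)(x_0)\int_{B(x_0,\varepsilon)}v_\varepsilon\,dx$ plus a remainder of size $\varepsilon\,O(1)\int_{B(x_0,\varepsilon)}v_\varepsilon\,dx$. Since $v_\varepsilon$ solves $-\delta t \alpha\Delta v_\varepsilon + v_\varepsilon = u^n - f + \delta t \alpha\Delta f$ in $B(x_0,\varepsilon)$ with zero Dirichlet data on $\partial B(x_0,\varepsilon)$, Proposition \ref{prop:appendix:int_calculus} applied with diffusion coefficient $\delta t \alpha$, $w=v_\varepsilon$ and $g=-f+\delta t \alpha\Delta f + u^n$ gives the asymptotics $\int_{B(x_0,\varepsilon)}v_\varepsilon\,dx = \big(f-\delta t \alpha\Delta f - u^n\big)(x_0)\,\pi\varepsilon^4\ln(\varepsilon) + O(\varepsilon^4)$; the remainder is then $o(\varepsilon^4)$, and combining gives the claimed expansion.

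I expect no genuinely new analytic difficulty here: the one thing to be careful about is the bookkeeping of the linear term $-\int_D u^n u\,dx$ through the substitution of the weak identity, making sure that the quadratic terms still cancel and that only $\int_{B(x_0,\varepsilon)}v_\varepsilon\,(f-\delta t \alpha\Delta f - u^n)$ survives. Once the problem is rephrased in terms of $v_\varepsilon$, it is exactly the stationary situation of Proposition \ref{prop:topologicalGradient} with the relabelled source $f-\delta t \alpha\Delta f - u^n$ and diffusion coefficient $\delta t \alpha$, so the $\pi\varepsilon^4\ln(\varepsilon)$ leading term, and the fact that $\varepsilon\,O(1)\int_{B(x_0,\varepsilon)}v_\varepsilon = o(\varepsilon^4)$, follow verbatim from Proposition \ref{prop:appendix:int_calculus}.
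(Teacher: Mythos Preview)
Your proposal is correct and follows essentially the same route as the paper's proof: both localize $j(K_\varepsilon)-j(K)$ to $B(x_0,\varepsilon)$, use the weak formulation tested against $u_\varepsilon-f$ together with one integration by parts to reach the identity $\frac12\int_{B(x_0,\varepsilon)}(u_\varepsilon-f)\,(f-\delta t\alpha\Delta f-u^n)\,dx$, Taylor-expand the smooth factor at $x_0$, and invoke Proposition~\ref{prop:appendix:int_calculus} with $w=u_\varepsilon-f$ and $g=-f+\delta t\alpha\Delta f+u^n$. Your write-up is in fact slightly more explicit than the paper's (you justify why the boundary term vanishes via $v_\varepsilon\in H^1_0(B(x_0,\varepsilon))$ and you flag that the diffusion coefficient in the appendix proposition becomes $\delta t\alpha$), but the argument is the same.
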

\begin{proof}
    For simplicity, we write $u_\varepsilon$ instead of $u_\varepsilon^{n+1}$ and $u$ instead of $u^n$. Thus, we have

    \[ j(K_\varepsilon) - j(K) = \frac{\delta t \alpha}{2}\int_{B(x_0,\varepsilon)} |\nabla u_\varepsilon|^2\ dx + \frac{1}{2}\int_{B(x_0,\varepsilon)} u_\varepsilon^2\ dx - \int_{B(x_0,\varepsilon)} u u_\varepsilon\ dx \] \[\hspace{8cm}- \frac{\delta t \alpha}{2}\int_{B(x_0,\varepsilon)} |\nabla f|^2\ dx - \frac{1}{2}\int_{B(x_0,\varepsilon)} f^2\ dx + \int_{B(x_0,\varepsilon)} uf\ dx. \]
    
    The variational formulation gives us 
    
    \[ \int_{B(x_0,\varepsilon)} u_\varepsilon^2\ dx + \delta t \alpha\int_{B(x_0,\varepsilon)} |\nabla u_\varepsilon|^2\ dx = \int_{B(x_0,\varepsilon)} u_\varepsilon f\ dx + \delta t \alpha \int_{B(x_0,\varepsilon)}\nabla u_\varepsilon\cdot\nabla f\ dx + \int_{B(x_0,\varepsilon)} u(u_\varepsilon-f)\ dx. \]
    
    Then, 
    
    \begin{align*}
        j(K_\varepsilon) - j(K) &= \frac{1}{2}\int_{B(x_0,\varepsilon)} u_\varepsilon f\ dx + \frac{\delta t \alpha}{2} \int_{B(x_0,\varepsilon)}\nabla u_\varepsilon\cdot\nabla f\ dx - \frac{1}{2}\int_{B(x_0,\varepsilon)} u(u_\varepsilon-f)\ dx \\
        & \hspace{8cm} - \frac{\delta t \alpha}{2}\int_{B(x_0,\varepsilon)} |\nabla f|^2\ dx - \frac{1}{2}\int_{B(x_0,\varepsilon)} f^2\ dx \\
        & = \frac{1}{2}\int_{B(x_0,\varepsilon)} f(u_\varepsilon - f)\ dx + \frac{\delta t \alpha}{2}\int_{B(x_0,\varepsilon)} \nabla f\cdot\nabla(u_\varepsilon - f)\ dx - \frac{1}{2}\int_{B(x_0,\varepsilon)} u(u_\varepsilon - f)\ dx \\
        & = \frac{1}{2}\int_{B(x_0,\varepsilon)} f(u_\varepsilon - f)\ dx - \frac{\delta t \alpha}{2}\int_{B(x_0,\varepsilon)} \Delta f(u_\varepsilon - f)\ dx - \frac{1}{2}\int_{B(x_0,\varepsilon)} u(u_\varepsilon - f)\ dx \\
        & = \frac{1}{2}\int_{B(x_0,\varepsilon)} (f-\delta t \alpha\Delta f - u) (u_\varepsilon - f)\ dx.
    \end{align*}

    We have $(f-\delta t \alpha\Delta f - u) = (f-\delta t \alpha\Delta f - u)(x_0) + \Vert x - x_0\Vert O(1)$, and hence
    
    \[ j(K_\varepsilon) - j(K) = \frac{1}{2}\Big(f(x_0)-\delta t \alpha\Delta f(x_0) - u(x_0)\Big) \int_{B(x_0,\varepsilon)} (u_\varepsilon - f)\ dx + O(\varepsilon)\int_{B(x_0,\varepsilon)} (u_\varepsilon - f)\ dx. \]
   
    Ones again, it is enough to compute the fundamental term in the asymptotic development of the expression $\int_{B(x_0,\varepsilon)} u_\varepsilon-f\ dx$. This is done by using Proposition \ref{prop:appendix:int_calculus} with, $w=u_\varepsilon-f$ and $g=-f+\delta t \alpha\Delta f + u$.
\end{proof}

This result conclude the theoretical part of this paper. In the remaining part, we confront our results to real cases.
\section{Numerical Results}
\label{sec:problem_1:numerical_results}

In this last section, we present some numerical results. We begin by comparing different masks to confirm our work from Section \ref{sec:problem_1:topo_grad} and Section \ref{sec:problem_1:optimal_distrib}. Then, we present four time-dependent methods, that use results from Section \ref{sec:time-dependent}, and compare them. Finally, we propose two different ways to deal with colored images. We use finite differences to compute approximated solutions of involved problems.

\subsection{Masks Comparison}

We compare the $L^2$-error of the interpolation for different masks using Problem \ref{pb:problem_1}. 
For these experiments, we use the well-known grayscale image called ``Lena'', which size is $256\times256$ pixels, Figure \ref{fig:experiments:masks:inpainting:white:0:Lenna} (a). We apply gaussian noise of different deviation, denoted by $\sigma$, and we compare the following masks : ``Optimized'' corresponds to the mask derived from the topological gradient in Section \ref{sec:problem_1:topo_grad}, ``Halftoned Optimized'' corresponds to the mask derived from Section \ref{sec:problem_1:optimal_distrib}, ``H1'' correspond to the mask found in \cite{ref2}, ``Halftoned H1'' correspond to the halftoned mask found in \cite{ref2}, ``Random'' is a mask where pixels are selected randomly and ``B-Tree'' is the mask described in \cite{btree}. We denote 
by $f$ the original image,
by $f_\text{n}$ the noised image,
by $u$ the reconstructed image,
by ``Norm'' the error $\|f_\text{n}-u\|_{L^2(D)}$ and
by $\sharp K$ the number of pixels saved in the inpainting mask $K$.
Figure \ref{fig:experiments:masks:inpainting:white:0:Lenna} and Figure \ref{fig:experiments:masks:inpainting:white:0.1:Lenna} are reconstructions for $\alpha = 3$, with and without noise respectively.

\begin{center}\begin{longtable}{|c|c|c|c|c|c|c|c|c|c|c|c|c|}
	\hline
    \multirow{2}{*}{$\sigma$} & \multicolumn{2}{c|}{\textbf{Optimized}} & \multicolumn{2}{c|}{\textbf{\parbox{1.6cm}{\textbf{Halftoned Optimized}}}} & \multicolumn{2}{c|}{\textbf{H1}} & \multicolumn{2}{c|}{\textbf{\parbox{1.7cm}{\textbf{Halftoned H1}}}} & \multicolumn{2}{c|}{\textbf{Random}} & \multicolumn{2}{c|}{\textbf{B-Tree}} \\
    \cline{2-13}
    & Norm & $\sharp K$ & Norm & $\sharp K$ & Norm & $\sharp K$ & Norm & $\sharp K$ & Norm & $\sharp K$ & Norm & $\sharp K$ \\
	\hline
	\endhead
	\endfoot
	\csvreader[
		late after line=\csvifoddrow{\\\rowcolor{white}}{\\\rowcolor{gray!25}},
		late after last line=\\,
		before reading={},
		after reading={}
	] {resources/tables/data-c-0.1-masks.csv}
	{1=\Name, 2=\WNDev,3=\Salt, 4=\Pepper, 5=\EOpti, 6=\COpti, 7=\EOptiHalf, 8=\COptiHalf, 9=\EBel, 10=\CBel, 11=\EBelHalf, 12=\CBelHalf, 13=\ERand, 14=\CRand, 15=\EBTree, 16=\CBTree, 17=\Alpha}
	{\WNDev & \textbf{\EOpti} & \textbf{\COpti} & \textbf{\EOptiHalf} & \textbf{\COptiHalf} & \EBel & \CBel & \EBelHalf & \CBelHalf & \ERand & \CRand & \EBTree & \CBTree}
    \hline
	\caption{$L^2$-error comparison for different masks by taking $10\%$ of pixels, and applying Problem \ref{pb:problem_1} with $\alpha=3$.}
	\label{table:l2-masks}
\end{longtable}\end{center}

\begin{figure}[H]
	\centering
	\subfloat[Original image.]{
		\includegraphics[height=3.5cm]{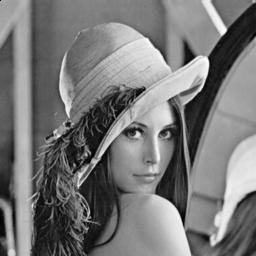}
	}
	\qquad
	\subfloat[Optimized mask.]{
		\includegraphics[height=3.5cm]{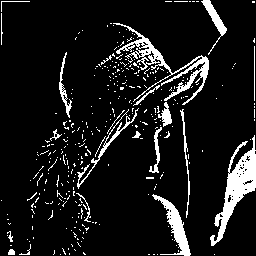}
	}
	\qquad
	\subfloat[Reconstruction with the optimized mask. $\|f-u\|_{L^2(D)}=100.13$, $\sharp K = 6553$.]{
		\includegraphics[height=3.5cm]{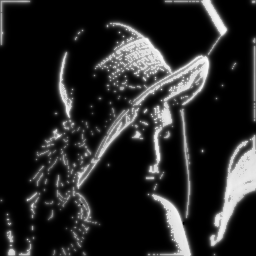}
	}
	\qquad
	\subfloat[Halftoned optimized mask.]{
		\includegraphics[height=3.5cm]{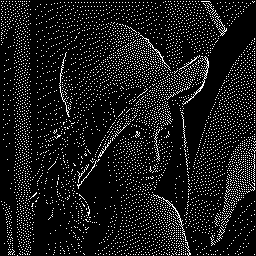}
	}
	\qquad
	\subfloat[Reconstruction with the halftoned optimized mask. $\|f-u\|_{L^2(D)}=51.66$, $\sharp K = 6488$.]{
		\includegraphics[height=3.5cm]{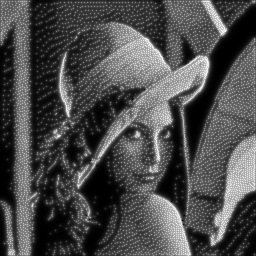}
	}
	\qquad
	\subfloat[Reconstruction with the ``H1'' mask. $\|f-u\|_{L^2(D)}=117.80$, $\sharp K = 6553$.]{
		\includegraphics[height=3.5cm]{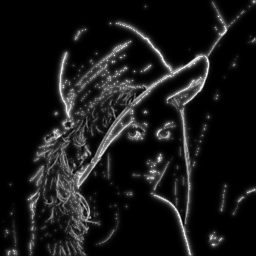}
	}
	\qquad
	\subfloat[Reconstruction with the ``Halftoned H1'' mask. $\|f-u\|_{L^2(D)}=93.80$, $\sharp K = 6492$.]{
		\includegraphics[height=3.5cm]{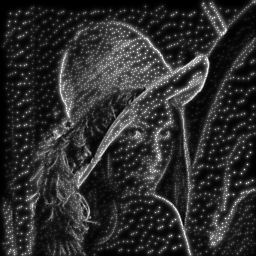}
	}
	\qquad
	\subfloat[Reconstruction with the random mask. $\|f-u\|_{L^2(D)}=74.58$, $\sharp K = 6624$.]{
		\includegraphics[height=3.5cm]{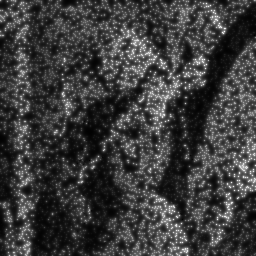}
	}
	\qquad
	\subfloat[Reconstruction with the B-Tree mask. $\|f-u\|_{L^2(D)}=56.91$, $\sharp K = 6597$.]{
		\includegraphics[height=3.5cm]{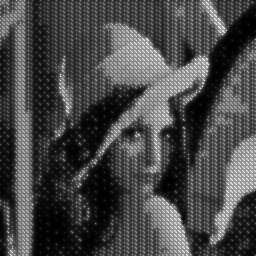}
	}
	
	\caption{Masks comparison for Problem \ref{pb:problem_1} by saving $10\%$ of pixels.}
	\label{fig:experiments:masks:inpainting:white:0:Lenna}
\end{figure} 

\begin{figure}[H]
	\centering
	\subfloat[Original image with a gaussian noise of deviation $0.1$.]{
		\includegraphics[height=3.5cm]{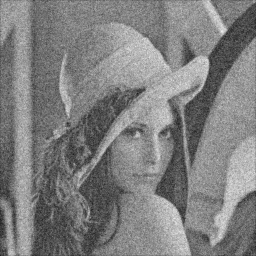}
	}
	\qquad
	\subfloat[Optimized mask.]{
		\includegraphics[height=3.5cm]{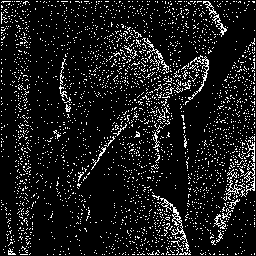}
	}
	\qquad
	\subfloat[Reconstruction with the optimized mask. $\|f_\text{n}-u\|_{L^2(D)}=67.61$, $\sharp K = 6553$.]{
		\includegraphics[height=3.5cm]{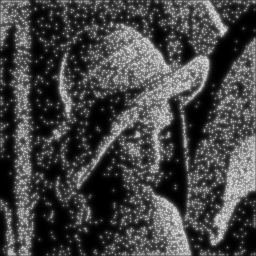}
	}
	\qquad
	\subfloat[Halftoned optimized mask.]{
		\includegraphics[height=3.5cm]{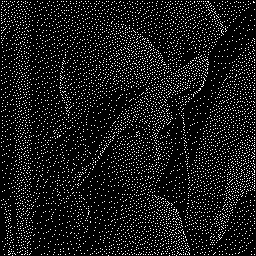}
	}
	\qquad
	\subfloat[Reconstruction with the halftoned optimized mask. $\|f_\text{n}-u\|_{L^2(D)}=51.08$, $\sharp K = 6492$.]{
		\includegraphics[height=3.5cm]{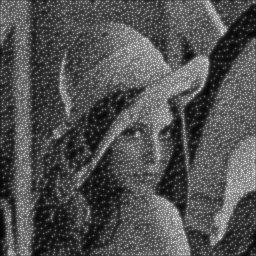}
	}
	\qquad
	\subfloat[Reconstruction with the ``H1'' mask. $\|f_\text{n}-u\|_{L^2(D)}=76.14$, $\sharp K = 6553$.]{
		\includegraphics[height=3.5cm]{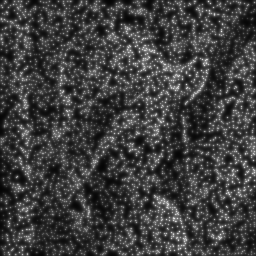}
	}
	\qquad
	\subfloat[Reconstruction with the ``Halftoned H1'' mask. $\|f_\text{n}-u\|_{L^2(D)}=63.40$, $\sharp K = 6504$.]{
		\includegraphics[height=3.5cm]{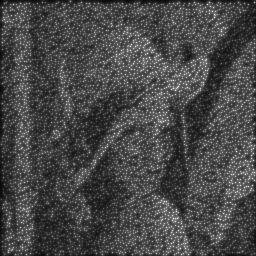}
	}
	\qquad
	\subfloat[Reconstruction with the random mask. $\|f_\text{n}-u\|_{L^2(D)}=72.78$, $\sharp K = 6560$.]{
		\includegraphics[height=3.5cm]{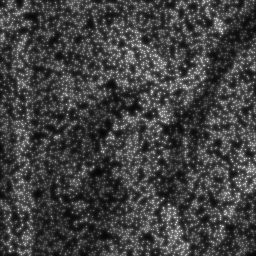}
	}
	\qquad
	\subfloat[Reconstruction with the B-Tree mask. $\|f_\text{n}-u\|_{L^2(D)}=62.24$, $\sharp K = 6510$.]{
		\includegraphics[height=3.5cm]{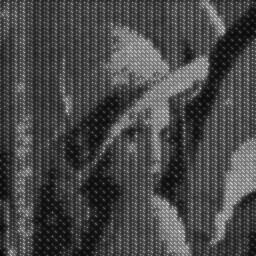}
	}
	
	\caption{Masks comparison with gaussian noise for Problem \ref{pb:problem_1} by saving $10\%$ of pixels.}
	\label{fig:experiments:masks:inpainting:white:0.1:Lenna}
\end{figure}

As expected from our mathematical analysis, masks ``Optimized'' and ``Halftoned Optimized'' globally gives the lowest $L^2$-error for Problem \ref{pb:problem_1} and the best visual results. The two ``H1'' masks are not efficient coupled with Problem \ref{pb:problem_1} since they have been designed for the Homogeneous Diffusion inpainting (i.e. for $\alpha\to +\infty$). Moreover, since these masks only depends on $|\Delta f|^2$, they are very sensitive to gaussian noise. The B-Tree algorithm, by its nature, induce visible artifacts, which leads to this mosaic visual.

\subsection{Methods Comparison}

Now, we propose different methods to construct mask and reconstruct image and compare them to the method described in \cite{ref2}, denoted ``H1'' in the sequel, i.e. the ``H1'' mask coupled with homogeneous diffusion inpainting. The method denoted by ``L2'' is the one with the halftoned mask from Section \ref{sec:problem_1:optimal_distrib} along with Problem \ref{pb:problem_1}. The remaining methods use the parabolic Problem \ref{pb:problem_1_time_implicit} as follow : 

\textbullet\hspace{0.1cm} Encoding : For the ``L2Sta'' method, we use the same mask, for all $n$ in $\{0,\dots,N\}$, as for the ``L2'' method, i.e. the halftoned mask from Section \ref{sec:problem_1:optimal_distrib}. For the ``L2Dec'' method, the ``L2Inc'' method and the ``L2Insta'' method, we use algorithms described in Appendix \ref{appendix:algos}.

\textbullet\hspace{0.1cm} Decoding : For each methods, we use only the last $K_n$ from the encoding step and inpaint with Problem  \ref{pb:problem_1_time_implicit} until a fixed time.


\begin{center}\begin{longtable}{|c|c|c|c|c|c|c|c|c|c|c|c|c|}
	\hline
    \multirow{2}{*}{$\sigma$} & \multicolumn{2}{c|}{\textbf{H1}} & \multicolumn{2}{c|}{\textbf{L2}} & \multicolumn{2}{c|}{\textbf{L2Sta}} & \multicolumn{2}{c|}{\textbf{L2Dec}} & \multicolumn{2}{c|}{\textbf{L2Inc}} & \multicolumn{2}{c|}{\textbf{L2Insta}} \\
    \cline{2-13}
     & Norm & $\sharp K$ & Norm & $\sharp K$ & Norm & $\sharp K$ & Norm & $\sharp K$ & Norm & $\sharp K$ & Norm & $\sharp K$ \\
	\hline
	\endhead
	\endfoot
	\csvreader[
		late after line=\csvifoddrow{\\\rowcolor{white}}{\\\rowcolor{gray!25}},
		late after last line=\\,
		before reading={},
		after reading={}
	] {resources/tables/data-c-0.1-cmp.csv}
	{1=\Name, 2=\WNDev, 3=\EH1, 4=\NH1, 5=\E, 6=\N, 7=\EConst, 8=\NConst, 9=\EDec, 10=\NDec, 11=\EInc, 12=\NInc, 13=\ESta, 14=\NSta}
	{\WNDev & \EH1 & \NH1 & \E & \N & \EConst & \NConst & \EDec & \NDec & \EInc & \NInc & \ESta & \NSta}
    \hline
	\caption{$L^2$-error comparison for different masks by taking $10\%$ of pixels for the following methods : ``L2'' with $\alpha=3.61$, ``L2Sta'' with $\alpha=10$, ``L2Dec'' with $\alpha=30$, $N=35$, ``L2Inc'' with $\alpha=8$, $N=40$ and ``L2Insta'' with $\alpha=10$, $N=10$.}
	\label{table:methods:cmp}
\end{longtable}\end{center}

Figure \ref{fig:experiments:method:cmp:wn:0} and Figure \ref{fig:experiments:method:cmp:wn:0.1} are examples of reconstructed image with methods described before, without noise and with a gaussian noise of deviation $\sigma=0.1$ respectively.

\begin{figure}[H]
	\centering
	\subfloat[Original image.]{
		\includegraphics[height=3.5cm]{resources/images/Lenna.png}
	}
	\qquad
	\subfloat[Reconstruction using method ``H1''. $\|u-f\|_{L^2(D)}=11.95$, $\sharp K=6492$.]{
		\includegraphics[height=3.5cm]{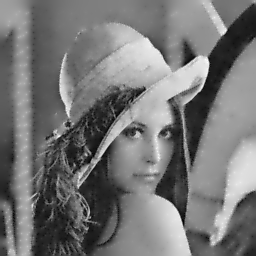}
	}
	\qquad
	\subfloat[Reconstruction using method ``L2''. $\|u-f\|_{L^2(D)}=47.91$, $\sharp K=6486$.]{
		\includegraphics[height=3.5cm]{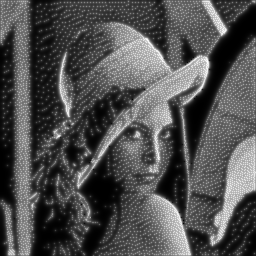}
	}
	\qquad
	\subfloat[Reconstruction using method ``L2Sta''. $\|u-f\|_{L^2(D)}=24.50$, $\sharp K=6488$.]{
		\includegraphics[height=3.5cm]{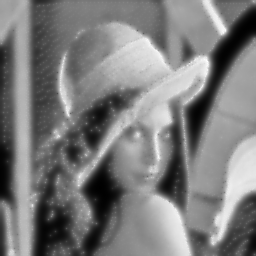}
	}
	\qquad
	\subfloat[Reconstruction using method ``L2Dec''. $\|u-f\|_{L^2(D)}=31.61$, $\sharp K=6553$.]{
		\includegraphics[height=3.5cm]{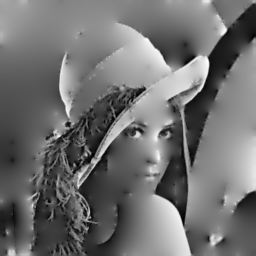}
	}
	\qquad
	\subfloat[Reconstruction using method ``L2Inc''. $\|u-f\|_{L^2(D)}=12.99$, $\sharp K=6520$.]{
		\includegraphics[height=3.5cm]{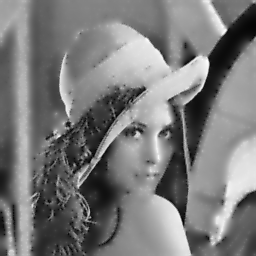}
	}
	\qquad
	\subfloat[Reconstruction using method ``L2Insta''. $\|u-f\|_{L^2(D)}=9.63$, $\sharp K=6514$.]{
		\includegraphics[height=3.5cm]{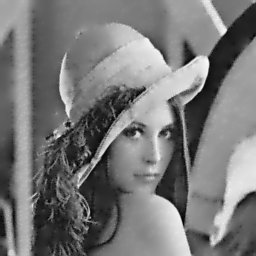}
	}
	
	\caption{Image reconstruction using various methods, with $10\%$ of total pixels saved. ``L2'' with $\alpha=3.61$, ``L2Sta'' with $\alpha=10$, ``L2Dec'' with $\alpha=30$, $N=35$, ``L2Inc'' with $\alpha=8$, $N=40$ and ``L2Insta'' with $\alpha=10$, $N=10$.}
	\label{fig:experiments:method:cmp:wn:0}
\end{figure} 

\begin{figure}[H]
	\centering
	\subfloat[Original image with a gaussian noise of deviation $0.1$.]{
		\includegraphics[height=3.5cm]{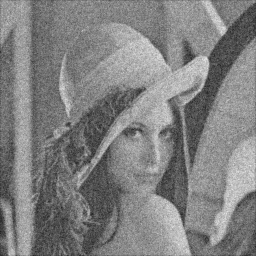}
	}
	\qquad
	\subfloat[Reconstruction using method ``H1''.  $\|u-f_\text{n}\|_{L^2(D)}=14.04$, $\sharp K=6503$.]{
		\includegraphics[height=3.5cm]{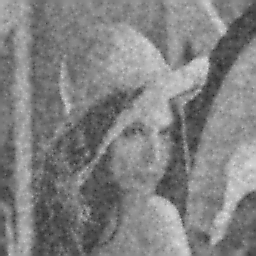}
	}
	\qquad
	\subfloat[Reconstruction using method ``L2''.  $\|u-f_\text{n}\|_{L^2(D)}=46.06$, $\sharp K=6492$.]{
		\includegraphics[height=3.5cm]{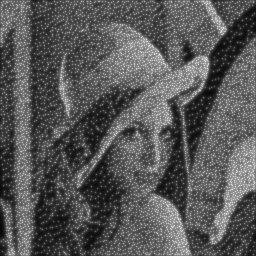}
	}
	\qquad
	\subfloat[Reconstruction using method ``L2Sta''.  $\|u-f_\text{n}\|_{L^2(D)}=16.39$, $\sharp K=6484$.]{
		\includegraphics[height=3.5cm]{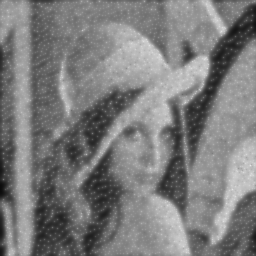}
	}
	\qquad
	\subfloat[Reconstruction using method ``L2Dec''.  $\|u-f_\text{n}\|_{L^2(D)}=14.61$, $\sharp K=6553$.]{
		\includegraphics[height=3.5cm]{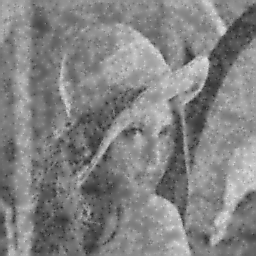}
	}
	\qquad
	\subfloat[Reconstruction using method ``L2Inc''.  $\|u-f_\text{n}\|_{L^2(D)}=10.52$, $\sharp K=6520$.]{
		\includegraphics[height=3.5cm]{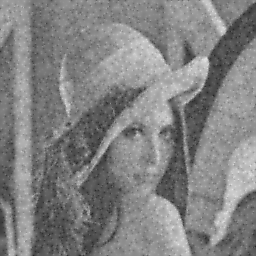}
	}
	\qquad
	\subfloat[Reconstruction using method ``L2Insta''.  $\|u-f_\text{n}\|_{L^2(D)}=13.19$, $\sharp K=6491$.]{
		\includegraphics[height=3.5cm]{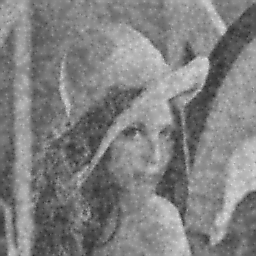}
	}
	
	\caption{Image reconstruction using various methods, with $10\%$ of total pixels saved and a gaussian noise of deviation $\sigma=0.1$ applied to the input image.}
	\label{fig:experiments:method:cmp:wn:0.1}
\end{figure} 

The ``H1'' method seems to give a nice visual quality on Figure \ref{fig:experiments:method:cmp:wn:0} (b). However, since its mask only depends on $|\Delta f|^2$, it is very sensitive to gaussian noise. That is why, when applying gaussian noise to the input image, Figure \ref{fig:experiments:method:cmp:wn:0.1} (b), the quality decrease quickly. It is interesting to remark that, without noise, the ``L2Insta'' method gives lower $L^2$-error than the ``H1'' one. The ``L2'' method (c) and the ``L2Sta'', use the same mask. However, in the ``L2Sta'' case, we use the time-dependent inpainting which leads to a more pleasant visual result. Indeed, the time-dependent inpainting allows a biggest diffusion of mask's pixels and, as a result, fills the black gap between mask's pixels. The ``L2Inc'' method gives also great results. It might be due to the fact that for each iteration, we add to the mask a small amount of best pixels for the current iteration. Thus, we keep important pixels for previous iterations. Contrasting with the ``L2Inc'' method, the ``L2Dec'' method remove, for each iteration, a small amount of inadequate pixels for the current iteration. Thus it is possible to remove important pixels for previous iterations. It is the same for the ``L2Insta'' method, but it is possible to remove all significant pixels for previous iterations. To conclude, our proposed methods seems to be more robust to gaussian noise than methods proposed in the literature.

\subsection{Salt and Pepper Noise}

Now, we confront our methods to salt and pepper noise. Figure \ref{fig:experiments:method:saltpepper:0.01} are experiments with $1\%$ of salt noise for the first row, and with $1\%$ of pepper noise for the second one.

\begin{figure}[H]
	\centering
	\subfloat[Original image with $1\%$ of salt noise.]{
		\includegraphics[height=3.5cm]{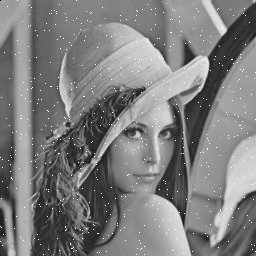}
	}
	\qquad
	\subfloat[Reconstruction using method ``L2Sta''.  $\|u-f_\text{n}\|_{L^2(D)}=25.33$, $\sharp K=6477$.]{
		\includegraphics[height=3.5cm]{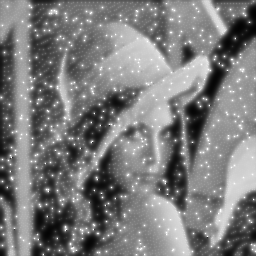}
	}
	\qquad
	\subfloat[Reconstruction using method ``L2Inc''.  $\|u-f_\text{n}\|_{L^2(D)}=12.17$, $\sharp K=6520$.]{
		\includegraphics[height=3.5cm]{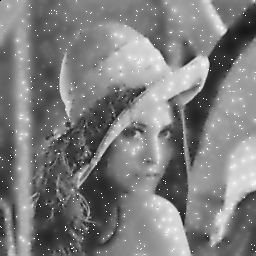}
	}
	\qquad
	\subfloat[Original image with $1\%$ of pepper noise.]{
		\includegraphics[height=3.5cm]{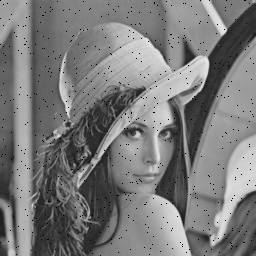}
	}
	\qquad
	\subfloat[Reconstruction using method ``L2Sta''.  $\|u-f_\text{n}\|_{L^2(D)}=22.78$, $\sharp K=6479$.]{
		\includegraphics[height=3.5cm]{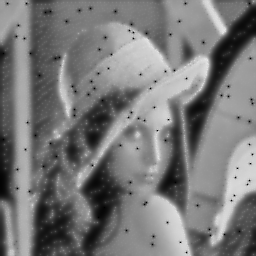}
	}
	\qquad
	\subfloat[Reconstruction using method ``L2Inc''.  $\|u-f_\text{n}\|_{L^2(D)}=11.5$, $\sharp K=6520$.]{
		\includegraphics[height=3.5cm]{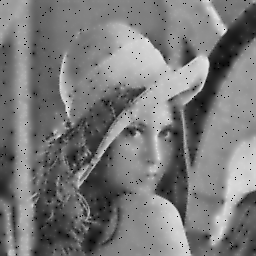}
	}
	
	\caption{Image reconstruction using proposed methods, with $10\%$ of total pixels saved and $1\%$ of salt noise applied to the input image.}
	\label{fig:experiments:method:saltpepper:0.01}
\end{figure} 

Our experiments show that our methods do not give satisfying reconstruction when the input image is significantly corrupted by salt and pepper noise. The diffusion inpainting amplifies the noise by making stains due to noise bigger. This is expected since our method is design to minimize the $L^2$-error which is not suited to remove impulse noise like salt and pepper noise. We suggest to minimize the $L^1$-error instead \cite{ref14, ref15}.

\subsection{Colored Images}

Finally, we propose two strategies for creating masks for colored images. A colored images can be modeled by a function $f$ from $D$ to $[0,1]^3$, $x \mapsto \big(f_\text{R}(x), f_\text{G}(x), f_\text{B}(x)\big)^\text{T}$, where functions $f_\text{R}$, $f_\text{G}$ and $f_\text{B}$ are from $D$ to $\R$, represent red channel, green channel and blue channel respectively. The first strategy consists in creating three masks, one for each channel. This is done in Figure \ref{fig:experiments:method:color} where (a) is the original image, (b) is the mask by keeping $10\%$ of total pixels for each masks and (c) is the reconstructed image. The second strategy is to convert the image into grayscale image, create a mask for the grayscale image and to use it for each channel. This strategy have been applied to Figure \ref{fig:experiments:method:color} (d), (e), by keeping $10\%$ of total pixels, (f), (g), by keeping $15\%$ of total pixels and (h), (i), by keeping $20\%$ of total pixels. In these experiments we used method ``L2Inc'' with $\alpha=8$ and $N=40$.

\begin{figure}[H]
	\centering
	\subfloat[Original image.]{
		\includegraphics[height=3.5cm]{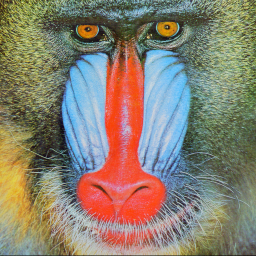}
	}
	\qquad
	\subfloat[Mask for strategy 1 - $10\%$ of pixels per mask.]{
		\includegraphics[height=3.5cm]{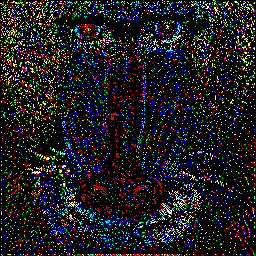}
	}
	\qquad
	\subfloat[Reconstruction using mask for strategy 1.]{
		\includegraphics[height=3.5cm]{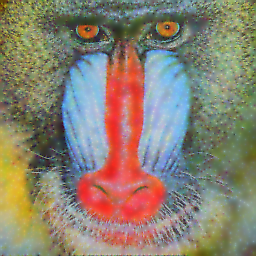}
	}
	\qquad
	\subfloat[Mask for strategy 2 - $10\%$ of pixels.]{
		\includegraphics[height=3.5cm]{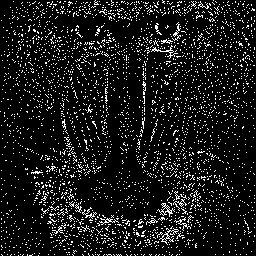}
	}
	\qquad
	\subfloat[Reconstruction using mask (d).]{
		\includegraphics[height=3.5cm]{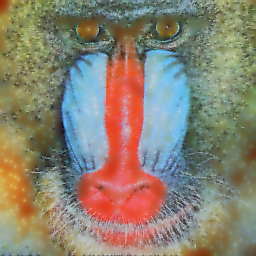}
	}
	\qquad
	\subfloat[Mask for strategy 2 - $15\%$ of pixels.]{
		\includegraphics[height=3.5cm]{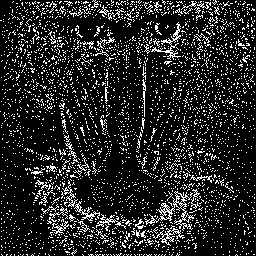}
	}
	\qquad
	\subfloat[Reconstruction using mask (f).]{
		\includegraphics[height=3.5cm]{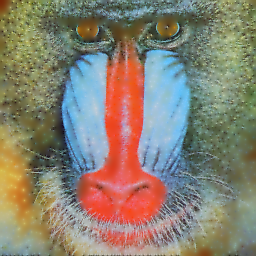}
	}
	\qquad
	\subfloat[Mask for strategy 2 - $20\%$ of pixels.]{
		\includegraphics[height=3.5cm]{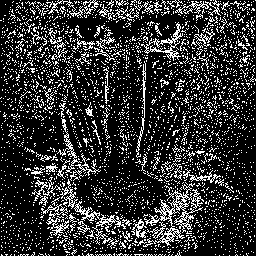}
	}
	\qquad
	\subfloat[Reconstruction using mask (h).]{
		\includegraphics[height=3.5cm]{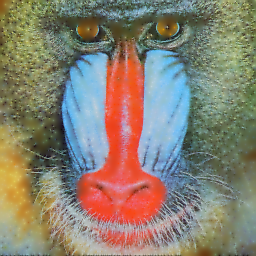}
	}

	\caption{Masks and reconstructions for colored images using ``L2Inc'' method with $\alpha=8$ and $N=40$.}
	\label{fig:experiments:method:color}
\end{figure}

Since in strategy 1 we compute mask with a fixed amount of pixels for each channel, the final mask, where the three masks are combined, may not have the same number of pixels. Indeed, the three masks may not have common pixels or only some common pixels. 
As expected, strategy 1 gives lower $L^2$-error than strategy 2. It can be notice easily be looking at the eyes of the monkey : For strategy 1 they are well-reconstructed but for strategy 2, even with $20\%$ of saved pixels, the reconstruction is not satisfying. An other tempting idea would be to save one mask per color channel, like strategy 1, but with a different amount of pixels for each channel. Indeed, due to color wavelength, the gaussian noise sensitivity is different for channels : the one with large wavelength are less sensitive than the one with thin wavelength.

\section*{Summary and Conclusions}
\label{sec:problem_1:conclusion}
We introduced a mathematical model of the compression problem and its relaxed formulation in the framework of $\Gamma$-convergence. To construct an inpainting mask, we investigated two shape optimization approaches. We obtained a criterion to create a optimal set by using thresholding. The second approach consists in considering ``fat pixels'' instead d of a set of single isolated pixel it yields to a softing of the criteria. We extended our methods to time-dependent problems which result in two-fold improvement of the stationary approach : First, smoothing hard thresholding in the selection for the coding step and giving away to build adaptively the final mask. Secondly, in the decoding phase, it performs the denoising of the input image. We introduced and implemented several algorithms to obtain an optimal mask and, for reconstruction phase, this induces a higher visual result than the stationary method. Numerical experiments confirm that our methods are highly efficient when images are corrupted by gaussian noise. Furthermore, they suggest to favor the density models for the stationary problem and the increasing mask strategy, that we called ``L2Inc'', for the time dependent problems. We confront our methods to salt and pepper, but they do not give satisfying reconstruction when the input image is significantly corrupted by such noise. This was expected since minimizing the $L^2$-error is not the best way to remove impulse noise like salt and pepper noise. We extended the numerical experiments to color images following two procedures The first one consists in creating three masks, one for each channel, while the second creates a mask for the grayscale image and use this mask for each channel. The first approach gives better reconstruction.




\bibliographystyle{unsrt}  
\bibliography{template}

\appendix

\newpage\section{Proof of the Estimate of $\mathbf{\theta}$}
\label{appendix:theta}

We aim to give some estimate of the function $\theta$ defined in Theorem \ref{thm:g-convergence}. In the sequel, we set $t_1 := \frac{\sqrt{2}}{2}$. We will widely use the following maximum principle of Problem \ref{pb:problem_1:v} for the proofs. \\

\begin{theorem}[Weak maximum principle] Let us assume that the solution of Problem \ref{pb:problem_1:v} $v_K$ is in $C^2(D)\cap C^0(\bar{D})$.
	If $g\geq 0$ in $D\setminus K$, then $v_K\geq 0$ in $D$.
	\label{prop:problem_1:v:maxprinciple}
\end{theorem}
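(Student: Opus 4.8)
The plan is to bypass pointwise analysis and argue by an energy estimate, testing the weak formulation of Problem~\ref{pb:problem_1:v} against the negative part of the solution; the assumed $C^2(D)\cap C^0(\bar D)$ regularity will only be invoked at the very end, to upgrade an almost-everywhere sign condition to an everywhere one. Recall that $v_K\in H^1(D)$ with $v_K=0$ on $K$ satisfies
\[
\alpha\int_D \nabla v_K\cdot\nabla\varphi\,dx+\int_D v_K\varphi\,dx=\int_{D\setminus K} g\,\varphi\,dx
\]
for every $\varphi\in H^1(D)$ vanishing on $K$, the boundary contribution of the integration by parts disappearing because of the homogeneous Neumann condition.

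First I would set $w:=v_K^-=\max(-v_K,0)\in H^1(D)$. Since $v_K=0$ on $K$, also $w=0$ on $K$, so $w$ is an admissible test function. By the Stampacchia truncation rules one has $\nabla w=-\mathbf{1}_{\{v_K<0\}}\nabla v_K$ a.e., whence $\nabla v_K\cdot\nabla w=-|\nabla w|^2$ and $v_K w=-w^2$ a.e. in $D$. Choosing $\varphi=w$ in the weak formulation therefore gives
\[
-\alpha\int_D|\nabla w|^2\,dx-\int_D w^2\,dx=\int_{D\setminus K} g\,w\,dx\geq 0,
\]
using $g\geq 0$ on $D\setminus K$ and $w\geq 0$. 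Hence $\alpha\|\nabla w\|_{L^2(D)}^2+\|w\|_{L^2(D)}^2\leq 0$, which forces $w\equiv 0$, i.e. $v_K\geq 0$ a.e. in $D$; continuity of $v_K$ on $\bar D$ then upgrades this to $v_K\geq 0$ everywhere.

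Under the stated regularity one could instead run the classical route: $v_K$ attains its minimum $m$ over the compact set $\bar D$, and if $m<0$ the minimizer lies in $D\setminus K$ (as $v_K=0$ on $K$). At an interior minimizer $\nabla v_K=0$ and $\Delta v_K\geq 0$, so $g=-\alpha\Delta v_K+v_K\leq m<0$, contradicting $g\geq 0$; a boundary minimizer $x_0\in\partial D$ is excluded by Hopf's lemma, since $v_K$ is superharmonic on the open set $\{v_K<0\}\subset D\setminus K$ (there $-\alpha\Delta v_K=g-v_K>0$) and the smoothness of $D$ supplies an interior ball, so $\partial v_K/\partial\mathbf{n}(x_0)<0$, in conflict with the Neumann condition. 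I expect the only genuinely delicate point, in either approach, to be technical rather than conceptual: in the variational argument, verifying that $v_K^-$ vanishes on $K$ in the precise (quasi-everywhere/trace) sense demanded of admissible test functions and justifying the chain rule for $\nabla v_K^-$; in the classical argument, the careful treatment of the boundary minimum via Hopf's lemma — interior ball condition, strictness of the minimum — and its compatibility with the homogeneous Neumann data.
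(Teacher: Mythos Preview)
Your argument is correct. The energy method---testing the weak formulation with $v_K^{-}$ and using the Stampacchia chain rule to conclude $\|v_K^{-}\|_{H^1}=0$---is a clean, self-contained proof, and your alternative classical route (interior minimum plus Hopf's lemma to dispose of a boundary minimum under the Neumann condition) is also fine.

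The paper itself does not give a proof: it simply cites Theorem~2 in Section~6.4 of Evans, \emph{Partial Differential Equations}. That result is the weak maximum principle for $Lu\leq 0$ with $c\geq 0$, proved by the classical pointwise argument for $C^2\cap C^0$ solutions---essentially the second approach you sketch. Note, however, that Evans's statement is for a pure Dirichlet problem, whereas here one has homogeneous Neumann data on $\partial D$ and Dirichlet data on $K$; your energy argument handles this mixed situation transparently through the weak formulation (the Neumann term drops automatically, and $v_K^{-}$ vanishes on $K$ because $v_K$ does), whereas the direct citation to Evans glosses over the boundary-minimum case that you correctly flag as needing Hopf's lemma. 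In that sense your write-up is more careful than the paper's one-line reference.
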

\begin{proof}
	See \cite{evans10} Theorem 2 in Section 6.4.
\end{proof}

Moreover, we will need the following properties : \\

\begin{lemma} If $v_K$ is the solution of Problem \ref{pb:problem_1:v}, then $v_K$ satisfies
	\[ \|v_K\|_{L^2(D)} \leq \big(1+\alpha C(D)\big)^{-1}\|g\|_{L^2(D)} \]
	and

	\[ \|v_K\|_{L^1(D)} \leq |D|^{1/2}(1+\alpha C(D)\big)^{-1} \|g\|_{L^2(D)}. \]
	\label{prop:problem_1:solution_bounded}
\end{lemma}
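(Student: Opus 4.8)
The plan is to exploit the variational characterisation of $v_K$ together with a Poincaré-type inequality coming from the constraint $v_K=0$ on $K$. First I would write the weak formulation of Problem~\ref{pb:problem_1:v}: since $v_K\in H^1(D)$ vanishes (q.e.) on $K$ and $\partial v_K/\partial\mathbf{n}=0$ on $\partial D$, an integration by parts over $D\setminus K$ (the boundary term on $\partial D$ vanishes by the Neumann condition, the one on $\partial K$ vanishes because the test functions vanish on $K$) gives
\[ \alpha\int_D \nabla v_K\cdot\nabla\varphi\ dx + \int_D v_K\varphi\ dx = \int_D g\,\varphi\ dx \]
for every $\varphi\in H^1(D)$ with $\varphi=0$ q.e.\ on $K$. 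As $v_K$ is itself admissible, taking $\varphi=v_K$ and applying the Cauchy--Schwarz inequality yields the energy identity
\[ \alpha\int_D |\nabla v_K|^2\ dx + \int_D v_K^2\ dx = \int_D g\,v_K\ dx \le \|g\|_{L^2(D)}\,\|v_K\|_{L^2(D)}. \]

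Next I would invoke the Poincaré inequality for functions vanishing on $K$: there is a constant $C(D)\ge 0$ (in the worst case $C(D)=0$, which already gives the crude bound $\|v_K\|_{L^2}\le\|g\|_{L^2}$; in the fat-pixel setting $K$ covers $\partial D$, so $v_K$ may be regarded as an element of $H^1_0(D)$ and $C(D)$ can be taken to be the corresponding Poincaré constant of $D$) such that $C(D)\,\|v_K\|_{L^2(D)}^2 \le \|\nabla v_K\|_{L^2(D)}^2$. Substituting this into the energy identity gives
\[ \big(1+\alpha C(D)\big)\,\|v_K\|_{L^2(D)}^2 \le \alpha\int_D |\nabla v_K|^2\ dx + \int_D v_K^2\ dx \le \|g\|_{L^2(D)}\,\|v_K\|_{L^2(D)}. \]
If $v_K\equiv 0$ the first estimate is trivial; otherwise dividing by $\|v_K\|_{L^2(D)}$ gives $\|v_K\|_{L^2(D)}\le\big(1+\alpha C(D)\big)^{-1}\|g\|_{L^2(D)}$. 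The $L^1$ estimate then follows immediately by Hölder's inequality on the bounded domain $D$, namely $\|v_K\|_{L^1(D)}=\int_D |v_K|\cdot 1\ dx\le |D|^{1/2}\|v_K\|_{L^2(D)}$, and inserting the $L^2$ bound just proved.

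The only genuinely delicate point is the Poincaré inequality and the exact meaning of the constant $C(D)$: one must justify that every $v\in H^1(D)$ vanishing on $K$ satisfies $\|\nabla v\|_{L^2(D)}^2\ge C(D)\|v\|_{L^2(D)}^2$ with $C(D)$ depending only on $D$ and not on $K$ — this is precisely where the boundary-covering of $K$ by balls (forcing $v_K$ effectively into $H^1_0(D)$) is used. Everything else is a one-line test-function computation plus Cauchy--Schwarz, so I would not belabour those steps.
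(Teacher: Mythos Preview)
Your proof is correct and follows essentially the same route as the paper: test the weak formulation with $v_K$ itself, apply the Poincar\'e inequality $\|\nabla v_K\|_{L^2}^2\ge C(D)\|v_K\|_{L^2}^2$ and Cauchy--Schwarz to obtain the $L^2$ bound, then H\"older for the $L^1$ bound. Your additional remark that the boundary-covering balls force $v_K\in H^1_0(D)$, so that $C(D)$ genuinely depends only on $D$, is a point the paper leaves implicit.
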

\begin{proof}
	Using the weak formulation and Poincaré inequality, we get
	\[ \|v_K\|_{L^2(D)}^2 = \int_D v_K^2\ dx = \int_D gv_K\ dx - \alpha\int_D |\nabla v_K|^2\ dx \leq \int_D gv_K\ dx - \alpha C(D)\int_D v_K^2\ dx.\]

	Hölder inequality gives us

	\[ \|v_K\|_{L^2(D)}^2 \leq \|g\|_{L^2(D)}\|v_K\|_{L^2(D)} - \alpha C(D)\|v_K\|_{L^2(D)}^2. \]

	\[ \|v_K\|_{L^2(D)} \leq \|g\|_{L^2(D)} - \alpha C(D)\|v_K\|_{L^2(D)} \Leftrightarrow \|v_K\|_{L^2(D)} \leq \big(1+\alpha C(D)\big)^{-1}\|g\|_{L^2(D)} \]

	Again, using Hölder inequality 

	\[ \|v_K\|_{L^1(D)} \leq |D|^{1/2}\ \|v_K\|_{L^2(D)} \leq |D|^{1/2}(1+\alpha C(D)\big)^{-1} \|g\|_{L^2(D)}.\]
\end{proof}

\begin{note}

	If $D\subset\R^2$ is convex we have, according to \cite{Payne1960},

	\[ \|v_K\|_{L^2(D)} \leq \big(1+\alpha\pi^2 \diam(D)^{-2}\big)^{-1}\|g\|_{L^2(D)} . \]

	\[ \|v_K\|_{L^1(D)} \leq |D|^{1/2}(1+\alpha\pi^2 \diam(D)^{-2}\big)^{-1} \|g\|_{L^2(D)}. \]

	Moreover, if $g=1$, we have $\|v_K\|_{L^1(D)} \leq (1+\alpha\pi^2 \diam(D)^{-2}\big)^{-1}|D|$.
\end{note}\vspace{0.2cm}

\begin{lemma}
	We have, for $m$ in $(0,t_1)$, \[ \theta(m) \leq C_1(\alpha)\ln{m^{-1}} + C_2(\alpha), \]
	where $C_1$ and $C_2$ are constants depending on $\alpha$.
	\label{prop:problem_1:upper_bound}
\end{lemma}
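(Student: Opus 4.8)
The plan is to use that $\theta$ is defined in Theorem~\ref{thm:g-convergence} as an infimum, $\theta(m)=\inf_{K_n\in\mathcal A_{m,n}}\liminf_{n\to\infty} n\int_D g v_{K_n}\,dx$, so it suffices to produce \emph{one} admissible sequence $(K_n)_n$ with $\limsup_{n\to\infty} n\int_D g v_{K_n}\,dx\le C_1(\alpha)\ln(m^{-1})+C_2(\alpha)$. I would take $K_n$ to be a regular periodic array: tile $D$ by axis-parallel squares $(Q_i)$ of side $\ell_n\asymp n^{-1/2}$ chosen so that there are $\sim n$ of them, put $x_i$ at the centre of $Q_i$, and set $K_n=\overline D\cap\bigcup_i\overline{B(x_i,r_n)}$ with $r_n=mn^{-1/2}$, padding to exactly $n$ balls if necessary; the $O(n^{1/2})$ incomplete cells meeting $\partial D$ and the padding balls contribute only lower-order terms. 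Writing $\rho_n:=\ell_n/\sqrt2$ for the circumradius of a cell, the circumscribed balls $B(x_i,\rho_n)$ cover $\overline D$, and the annuli $\{r_n<|x-x_i|<\rho_n\}$ are nondegenerate precisely when $r_n<\rho_n$, i.e. for $m<t_1=\sqrt2/2$; this is exactly why the statement is restricted to that range.

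Next I would compare $v_{K_n}$ to an explicit radial supersolution built cell by cell. Around $x_i$ take $\bar v(x)=\psi(|x-x_i|)$, where $\psi$ solves the $2$D torsion-type ODE $-\alpha\big(\psi''+\psi'/s\big)=\|g\|_{L^\infty(D)}$ on $(r_n,\rho_n)$ with $\psi(r_n)=0$ and $\psi'(\rho_n)=0$, namely $\psi(s)=\frac{\|g\|_{L^\infty}}{\alpha}\big(\tfrac{\rho_n^2}{2}\ln\tfrac{s}{r_n}-\tfrac14(s^2-r_n^2)\big)\ge 0$, and set $\bar v=0$ on the mask balls. Since neighbouring centres are mirror images across their common cell face, $\bar v$ coincides with $\psi\big(d(\cdot,\{x_i\})\big)$, hence is continuous on $\overline D$ and lies in $H^1(D)$; along the cell interfaces $\bar v$ has an upward ridge, so the only distributional contribution of $-\alpha\Delta\bar v$ there has a favourable sign, and altogether $-\alpha\Delta\bar v+\bar v\ge\|g\|_{L^\infty}\ge g$ in $D\setminus K_n$ while $\bar v=0=v_{K_n}$ on $\partial K_n$. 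Applying the comparison principle (Theorem~\ref{prop:problem_1:v:maxprinciple} to $\bar v-v_{K_n}$, handling the Neumann condition on $\partial D$ via the boundary-covering balls as in the text) then yields $0\le v_{K_n}\le\bar v$ on $D$.

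It remains to compute. Since $g\ge 0$, one has $0\le n\int_D g v_{K_n}\,dx\le n\|g\|_{L^\infty}\int_D\bar v\,dx\le n\|g\|_{L^\infty}\cdot n\cdot 2\pi\int_{r_n}^{\rho_n}\psi(s)\,s\,ds$, and an elementary integration by parts gives $\int_{r_n}^{\rho_n}\psi(s)\,s\,ds\le\frac{\|g\|_{L^\infty}}{\alpha}\,\frac{\rho_n^{4}}{4}\,\ln\frac{\rho_n}{r_n}$. Here $\rho_n/r_n=\frac{\ell_n}{\sqrt2\,r_n}\asymp m^{-1}$, so $\ln(\rho_n/r_n)\le\ln(m^{-1})+O(1)$, and $\rho_n^{4}\asymp n^{-2}$, so the two factors of $n$ cancel the $n^{-2}$ and leave a bound of the form $C(\alpha)\ln(m^{-1})+C'(\alpha)$ that is uniform in $n$ (the constants also absorb the fixed quantities $|D|$ and $\|g\|_{L^\infty}$, as in Lemma~\ref{prop:problem_1:solution_bounded}). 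Passing to the $\liminf$ in $n$ and then to the infimum over admissible sequences gives $\theta(m)\le C_1(\alpha)\ln(m^{-1})+C_2(\alpha)$. The case of general $g$ follows by splitting $\{g\ge 0\}$ and $\{g<0\}$ as noted after Theorem~\ref{thm:g-convergence}. The delicate points are geometric and bookkeeping rather than analytic: making the radial profiles glue into a globally $H^1$ (super)solution across the square tiling (the mirror-symmetry continuity and the ridge-sign observations), and checking that the per-cell contribution is $O(n^{-2}\ln(m^{-1}))$ so that after summing $n$ cells and multiplying by $n$ the estimate is independent of $n$; one can equivalently bound the minimized Dirichlet energy directly by the same radial test function, which avoids the comparison principle but needs the identical computation.
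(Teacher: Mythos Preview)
Your argument is correct and rests on the same core idea as the paper's proof: take the periodic square lattice as the test configuration and compare the solution to the explicit radial torsion function on an annulus (your $\psi$, with $\|g\|_{L^\infty}=1$ and $\rho_n=t_1$, $r_n=m$, is precisely the paper's $\tilde w$). The executions differ. The paper works on the unit square with $g=1$, invokes a scaling identity $\int_I v^1_{K_1}\,dx=n\int_I v^1_{K_n}\,dx$ to reduce to a single cell, embeds that cell in the circumscribed disk $B(x_0,t_1)$, and then performs two successive maximum-principle comparisons: first $v^1_{K_1}\le w$ for the full operator $-\alpha\Delta+I$ on the disk, then $w\le\tilde w$ for the Laplace torsion problem. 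You instead glue the rescaled radial profiles across the Voronoi cells into a single global Lipschitz supersolution and apply the comparison principle once. Your route handles a general domain $D$ and general $g\ge 0$ directly (at the price of the boundary-cell bookkeeping you flag), and it sidesteps the paper's scaling identity, which is in fact not exact for $-\alpha\Delta+I$ since the zeroth-order term does not rescale; the paper's final bound survives because the last comparison is to the Laplace solution $\tilde w$, which does scale correctly, but your presentation avoids that slip altogether. One small caveat: Theorem~\ref{prop:problem_1:v:maxprinciple} as stated is for $C^2$ solutions, whereas your $\bar v$ is only Lipschitz; the comparison you need follows from the weak formulation by testing with $(v_{K_n}-\bar v)^+$, which is routine but worth saying.
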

\begin{proof}
	We consider a particular family of sets $K_n$ in $\mathcal{A}_{m,n}$. We choose an integer $k$ such that $n = k^2$ and we suppose $K_n\in\mathcal{A}_{m,n}$ are composed of $n$ balls of radius $m/k$, with their centers superposing the centers of the $k^2$ squares of side $1/k$ of a regular lattice partitioning the square $I^2$.

	\begin{figure}[!ht]
		\centering
		\subfloat[$K_1$]{
			\includegraphics[width=3.5cm]{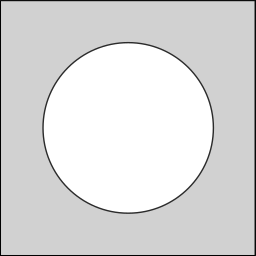}
		}
		\qquad
	    \subfloat[$K_4$]{
			\includegraphics[width=3.5cm]{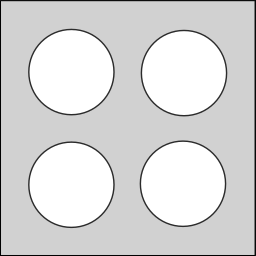}
		}
		\qquad
		\subfloat[$K_9$]{
			\includegraphics[width=3.5cm]{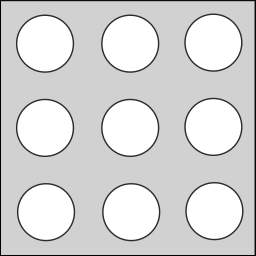}
		}
		\caption{Drawing of $I^2\setminus K_n$ with $n := k^2$, for $k=1,2,3$.}
	\end{figure} 

	Let us denote $v^1_{K_n}$ the solution of Problem \ref{pb:problem_1:v} with $g=1$, $K=K_n$ and $D=I^2$. It holds $\int_I v^1_{K_1}\ dx = n\int_I v^1_{K_n}\ dx$. We recall 

	\[ \theta(m) := \inf_{K_n\in\mathcal{A}_{m,n}} \liminf_n n \int_D gv_{K_n}\ dx. \]

	In particular \[ \theta(m) \leq \liminf_n n \int_D v^1_{K_n}\ dx = \int_I v^1_{K_1}\ dx. \]

	We have $I^2\subset B(x_0,t_1)$. Therefore if we denote by $w$ the solution of Problem \ref{pb:problem_1:v} with $g=1$, $D=B(x_0,t_1)$ and $K=K_1:=B(x_0,m)$, it holds by the maximum principle that $v^1_{K_1} \leq w$. Then we have the following estimate 

	\[ \theta(m) \leq \int_{B(x_0,t_1)} w\ dx. \]

	Let us consider the following problem

	\begin{equation}
		\left\{\begin{array}{rl}
			-\alpha\Delta \tilde{w} = 1 & \text{in}\ B(x_0,t_1)\setminus\overline{B(x_0,m)}, \\
			\tilde{w} = 0 & \text{in}\ \overline{B(x_0,m)}, \\
			\frac{\partial \tilde{w}}{\partial \mathbf{n}} = 0 & \text{on}\ \partial B(x_0,t_1).
		\end{array}\right .
	\end{equation}

	Now, we set $e:=w-\tilde{w}$. Thus we have for all $v$ in $H^1_0(B(x_0,t_1)\setminus\overline{B(x_0,m)})$

	\[ \alpha\int_{B}\nabla e\cdot\nabla v\ dx = \alpha\int_{B}\nabla w\cdot\nabla v\ dx - \alpha\int_{B}\nabla \tilde{w}\cdot\nabla v\ dx = -\int_{B}wv\ dx, \]

	i.e. $e$ satisfies the problem below

	\begin{equation}
		\left\{\begin{array}{rl}
			-\alpha\Delta e = -w & \text{in}\ B(x_0,t_1)\setminus\overline{B(x_0,m)}, \\
			e = 0 & \text{in}\ \overline{B(x_0,m)}, \\
			\frac{\partial e}{\partial \mathbf{n}} = 0 & \text{on}\ \partial B(x_0,t_1).
		\end{array}\right .
	\end{equation}

	Since $w\geq 0$, we have by the maximum principle that $e\leq 0$ i.e. $0\leq w\leq \tilde{w}$. By consequence

	\[ \theta(m) \leq \int_{B(x_0,t_1)} \tilde{w}\ dx. \]

	The solution $\tilde{w}$ have been computed in \cite{ref7}. Due to the radial symmetry of $\tilde{w}$, we can get explicitly $\tilde{w}$, solution of :

	\begin{equation}
		\left\{\begin{array}{rl}
			\tilde{w}''(r) + \frac{1}{r} \tilde{w}'(r) = -\frac{1}{\alpha} & \text{if}\ m<r< t_1, \\
			\tilde{w} = 0 & \text{if}\ 0\leq r\leq m, \\
			\tilde{w}'(t_1) = 0 & .
		\end{array}\right .
	\end{equation}

	For $r=|x-x_0|$ we have

	\[ \tilde{w}(x) = \begin{cases}
		k\ln(\frac{r}{m}) - \frac{1}{4\alpha}(r^2-m^2) & \text{if}\ m<r< t_1 \\ 
		0 & \text{if}\ 0\leq r\leq m \\

	\end{cases},\ k=\frac{m t_1^2}{2\alpha}. \]

	Integrating $\tilde{w}$ over $B(x_0,t_1)$

	\begin{align*}	
		\int_{B(x_0,t_1)} \tilde{w}\ dx & = 2\pi\int_m^{t_1} \Big(k\ln(\frac{r}{m}) - \frac{1}{4\alpha}(r^2-m^2)\Big)r\ dr \\
		& = 2\pi k\int_m^{t_1} r\ln(\frac{r}{m})\ dr - \frac{\pi}{2\alpha}\int_m^{t_1} r^3\ dr + \frac{\pi}{2\alpha}m^2\int_m^{t_1} r\ dr \\
		& = \pi kt_1^2\ln{\frac{t_1}{m}} + \frac{\pi}{2}\underbrace{(\frac{m^2}{2\alpha} - \frac{k}{m})}_{\leq\ 0}(t_1^2 - m^2) + \frac{\pi}{8\alpha}\underbrace{(m^4-t_1^4)}_{\leq\ 0} \\
		& \leq \frac{\pi t_1^4}{2\alpha}m \ln{\frac{t_1}{m}} \leq \frac{\pi t_1^4}{2\alpha}\ln{\frac{t_1}{m}} = \underbrace{\frac{\pi t_1^4}{2\alpha}}_{=:\ C_1(\alpha)}\ln{m^{-1}} + \underbrace{\frac{\pi t_1^4}{2\alpha}\ln{t_1}}_{=:\ C_2(\alpha)}.
	\end{align*}
\end{proof} \vspace{0.2cm}

\begin{lemma}
	We have, for $m$ in $(0,t_1)$, \[ C_1(\alpha)\ln(m^{-1}) - C_2(\alpha) \leq \theta(m),\]

	where $C_1$ and $C_2$ are constants depending on $\alpha$.

	\label{prop:problem_1:lower_bound}
\end{lemma}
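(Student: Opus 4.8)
The plan is to prove the equivalent assertion that for \emph{every} sequence $(K_n)_n$ with $K_n\in\mathcal{A}_{m,n}$ one has $\liminf_{n\to+\infty} n\int_D gv_{K_n}\,dx \geq C_1(\alpha)\ln(m^{-1})-C_2(\alpha)$; taking the infimum over such sequences then gives the claim, since this is exactly how $\theta(m)$ is defined in Theorem~\ref{thm:g-convergence}. The starting point is the dual (complementary energy) form of the compliance: since $v_{K_n}$ minimizes $\tfrac{\alpha}{2}\int_D|\nabla v|^2+\tfrac12\int_D v^2-\int_D gv$ over $\{v\in H^1(D):v=0\text{ q.e. on }K_n\}$ and the minimum equals $-\tfrac12\int_D gv_{K_n}$, one gets
\[ \int_D gv_{K_n}\,dx \;=\; \sup\Big\{\, 2\int_D g\phi\,dx - \alpha\int_D|\nabla\phi|^2\,dx - \int_D\phi^2\,dx \;:\; \phi\in H^1(D),\ \phi=0\ \text{q.e. on }K_n \,\Big\}, \]
so it suffices to exhibit, for each $n$, one admissible test function $\phi_n$ whose energy has the right size.

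The crucial point is to build $\phi_n$ out of capacitary bumps whose support radius is tied to the \emph{mesoscopic} scale $n^{-1/2}$ rather than to a fixed length. Fix a constant $c\in(0,1)$, small enough (depending only on $g$, $D$) in a sense made precise below, put $\rho_n:=cn^{-1/2}$, and for each of the $n$ balls $B(x_i,r_n)$ with $r_n=mn^{-1/2}$ let $\omega_i$ be its capacitary potential in $B(x_i,\rho_n)$, i.e.\ the radial logarithmic profile equal to $1$ on $\overline{B(x_i,r_n)}$, to $0$ outside $B(x_i,\rho_n)$, with $\int_{\R^2}|\nabla\omega_i|^2\,dx=2\pi/\ln(\rho_n/r_n)=2\pi/\ln(c/m)$. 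Set $w_n:=\max_i\omega_i$ and $\phi_n:=\varepsilon(1-w_n)$ for a scalar $\varepsilon>0$. Then $\phi_n=0$ q.e.\ on $K_n$ because $w_n=1$ there, and two estimates hold \emph{uniformly over all configurations}: by subadditivity of the Dirichlet integral under pointwise maxima, $\int_D|\nabla w_n|^2\,dx\leq\sum_{i=1}^n\int|\nabla\omega_i|^2\,dx=2\pi n/\ln(c/m)$ — linear in $n$ with exactly the logarithmic constant needed, precisely because the two factors $n^{-1/2}$ in $\rho_n/r_n$ cancel — while $w_n$ is supported in $\bigcup_i B(x_i,\rho_n)$, a set of measure at most $n\pi\rho_n^2=\pi c^2$, independent of $n$. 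Hence $0\leq 1-w_n\leq1$ gives $\int_D(1-w_n)^2\,dx\leq|D|$, and $\int_D g(1-w_n)\,dx\geq\int_D g\,dx-\|g\|_{L^2(D)}\sqrt{\pi}\,c$, which is at least $\tfrac12\int_D g\,dx$ once $c$ is chosen small enough.

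Inserting $\phi_n$ into the dual formula and optimizing the single scalar $\varepsilon$ yields
\[ \int_D gv_{K_n}\,dx\;\geq\;\frac{\big(\int_D g(1-w_n)\,dx\big)^2}{\alpha\int_D|\nabla w_n|^2\,dx+\int_D(1-w_n)^2\,dx}\;\geq\;\frac{\big(\tfrac12\int_D g\,dx\big)^2}{\,2\pi\alpha n/\ln(c/m)+|D|\,}, \]
so $\liminf_{n\to+\infty} n\int_D gv_{K_n}\,dx\geq\frac{(\int_D g\,dx)^2}{8\pi\alpha}\ln(c/m)=\frac{(\int_D g\,dx)^2}{8\pi\alpha}\ln(m^{-1})-\frac{(\int_D g\,dx)^2}{8\pi\alpha}\ln(c^{-1})$ for every sequence, which is of the announced form $C_1(\alpha)\ln(m^{-1})-C_2(\alpha)$ for $m<c$; for $m$ in the remaining subinterval $[c,t_1)$ the inequality is automatic because $\theta(m)\geq0$ by the maximum principle (Theorem~\ref{prop:problem_1:v:maxprinciple}), provided $C_2$ is enlarged so that $C_1\ln(c^{-1})-C_2\leq0$, and the case of $g$ of variable sign is handled by splitting on $\{g\geq0\}$ and $\{g<0\}$ as in the Note following the theorem.

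The only genuinely delicate choice is the scale $\rho_n\sim n^{-1/2}$ of the cutoffs. A constant minus the capacitary potential of all of $K_n$ in $D$, or minus bumps of a fixed radius, fails: for configurations in which the balls are spread at the natural spacing $n^{-1/2}$ the function $1-w_n$ then decays like $1/\ln n$ and $n\int_D gv_{K_n}\,dx$ can only be bounded below by $o(\ln m^{-1})$. Matching the bump radius to the inter-ball distance is exactly what simultaneously forces $\int_D|\nabla w_n|^2\,dx$ to grow linearly in $n$ with constant $2\pi/\ln(c/m)$ and keeps the ``lost mass'' $|\operatorname{supp}w_n|\leq\pi c^2$ bounded; everything after that is a one-variable optimization.
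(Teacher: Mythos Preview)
Your argument is correct and it reaches the same conclusion, but the route is genuinely different from the paper's. The paper follows the shape-derivative method of \cite{ref7}: it fixes the centers and varies the common radius $m$, uses the Hadamard-type identity $-\tfrac{dF}{dm}=n^{-1/2}\int_{\partial U_m}|\partial_n u_m^1|^2\,d\mathcal H^1$, bounds that boundary integral below via H\"older and Green's formula (which brings in $|U_m|$ and $\|u_m^1\|_{L^1}$ through Lemma~\ref{prop:problem_1:solution_bounded}), and then integrates in $m$. Your proof is purely variational: the dual expression $\int g v_{K_n}=\sup_\phi\{2\int g\phi-\alpha\int|\nabla\phi|^2-\int\phi^2\}$ plus the single test function $\varepsilon(1-\max_i\omega_i)$ built from capacitary bumps at the mesoscopic scale $cn^{-1/2}$, followed by a one-parameter optimization.

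What each approach buys: the paper's method gives a differential inequality in $m$ and hence explicit constants (those displayed at the end of its proof), at the price of using boundary regularity and the Hadamard formula; your approach is more elementary---no shape derivative, no boundary integrals, no monotonicity of $m\mapsto F(m)$---and it works directly for general $g\ge0$ with $\int_D g>0$ rather than only $g=1$. The two key ingredients you exploit, subadditivity of the Dirichlet integral under pointwise maxima and the scale-matching $\rho_n=cn^{-1/2}$ (so that $\rho_n/r_n=c/m$ is $n$-free), are exactly what make the estimate uniform over configurations. The only cosmetic point is that your constants depend on $c$, which in turn depends on $\int_D g$ and $\|g\|_{L^2(D)}$; in the paper's setting $D=I^2$, $g=1$, these are absolute, so $C_1,C_2$ indeed depend only on $\alpha$ as stated.
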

\begin{proof}
	We fix $n\in\N$ and $(x_i)_{i=1,\dots, n}\in I^2$. We consider the sets $K_m := \bigcup_{i=1}^n \overline{B(x_i, m n^{-1/2})}$ and $U_m := I^2\setminus K_m$. Let us denote $u_m^1$ the solution of Problem \ref{pb:problem_1:v} with $g=1$, $K=K_m$ and $D=I^2$. Using Holder inequality we have

	\[ \Big(\int_{\partial U_m}  \frac{\partial u_m^1}{\partial n}d\mathcal{H}^1\Big)^2 \leq \int_{\partial U_m} \Big|\frac{\partial u_m^1}{\partial n}\Big|^2\ d\mathcal{H}^1 \times \int_{\partial U_m}  \ d\mathcal{H}^1 = \mathcal{H}^1(\partial U_m) \int_{\partial U_m} \Big|\frac{\partial u_m^1}{\partial n}\Big|^2\ d\mathcal{H}^1. \]

	Moreover, we have thanks to the Green formula

	\[ \int_{\partial U_m} \frac{\partial u_m^1}{\partial n}d\mathcal{H}^1 = \int_{U_m}  \Delta u_m^1\ dx = \frac{1}{\alpha}\int_{U_m}  (u_m^1-1)\ dx = \frac{1}{\alpha}\|u_m^1\|_{L^1(U_m)} - \frac{1}{\alpha}|U_m|. \]

	Thus 
	
	\begin{align*}
	    & \Big(\frac{1}{\alpha}\|u_m^1\|_{L^1(U_m)} - \frac{1}{\alpha}|U_m|\Big)^2 \leq \mathcal{H}^1(\partial U_m) \int_{\partial U_m} \Big|\frac{\partial u_m^1}{\partial n}\Big|^2\ d\mathcal{H}^1 \\
	    \Leftrightarrow & \frac{1}{\alpha^2}|U_m|^2 - \frac{2}{\alpha^2}\|u_m^1\|_{L^1(U_m)}|U_m| \leq \mathcal{H}^1(\partial U_m) \int_{\partial U_m} \Big|\frac{\partial u_m^1}{\partial n}\Big|^2\ d\mathcal{H}^1.
	\end{align*}

	The fact that $|U_m| \geq 1-2\pi m^2$ and Property \ref{prop:problem_1:solution_bounded} give us 

	\[ \frac{2\pi^2\alpha - 1}{\alpha^2(1+2\pi^2\alpha)} - \frac{2\pi}{\alpha^2}m \leq \mathcal{H}^1(\partial U_m) \int_{\partial U_m} \Big|\frac{\partial u_m^1}{\partial n}\Big|^2\ d\mathcal{H}^1. \]

	Also, it holds $\mathcal{H}^1(\partial U_m) \leq 2\pi m\sqrt{n}$. Then 
	
	\begin{align*}
	    & \frac{2\pi^2\alpha - 1}{\alpha^2(1+2\pi^2\alpha)} - \frac{2\pi}{\alpha^2}m \leq 2\pi m\sqrt{n}\int_{\partial U_m} \Big|\frac{\partial u_m^1}{\partial n}\Big|^2\ d\mathcal{H}^1 \\
	    \Leftrightarrow & \frac{2\pi^2\alpha - 1}{2\pi\alpha^2(1+2\pi^2\alpha)}\frac{1}{m} - \frac{1}{\alpha^2} \leq \sqrt{n}\int_{\partial U_m} \Big|\frac{\partial u_m^1}{\partial n}\Big|^2\ d\mathcal{H}^1.
	\end{align*}

	Using that $-\frac{d F}{dm} = \sqrt{n}^{-1} \int_{\partial U_m} \Big|\frac{\partial u_m^1}{\partial n}\Big|^2\ d\mathcal{H}^1$, we have

	\[ \frac{2\pi^2\alpha - 1}{2\pi\alpha^2(1+2\pi^2\alpha)}\frac{1}{m} - \frac{1}{\alpha^2} \leq - n \frac{d F}{dm}. \]

	Integrating over $[m_1,m_2] \subset (0,t_1)$ yields to

	\[ \frac{2\pi^2\alpha - 1}{2\pi\alpha^2(1+2\pi^2\alpha)}\ln\Big(\frac{m_2}{m_1}\Big) - \frac{m_2-m_1}{\alpha^2} + nF_{m_2} \leq nF_{m_1}. \]

	Taking inf over $x_i$ and passing to $\liminf$ over $n$ when $n$ tends to $+\infty$ leads to

	\[ \frac{2\pi^2\alpha - 1}{2\pi\alpha^2(1+2\pi^2\alpha)}\ln\Big(\frac{m_2}{m_1}\Big) - \frac{m_2-m_1}{\alpha^2} + \theta(m_2) \leq \theta(m_1). \]

	In particular, if $m_2 = t_1$ and $m_1 = m$, $0<m<t_1 = \frac{\sqrt{2}}{2}$,
	
	\begin{align*}
	    & \frac{2\pi^2\alpha - 1}{2\pi\alpha^2(1+2\pi^2\alpha)}\ln\Big(\frac{t_1}{m}\Big) - \frac{t_1-m}{\alpha^2} \leq \theta(m) \\
	    \Leftrightarrow & \frac{2\pi^2\alpha - 1}{2\pi\alpha^2(1+2\pi^2\alpha)}\ln\Big(\frac{t_1}{m}\Big) - \frac{t_1}{\alpha^2} \leq \theta(m) \\
	    \Leftrightarrow & \underbrace{\frac{2\pi^2\alpha - 1}{2\pi\alpha^2(1+2\pi^2\alpha)}}_{=:\ C_1(\alpha)}\ln(m^{-1}) - \underbrace{\Big(\frac{2\pi^2\alpha - 1}{2\pi\alpha^2(1+2\pi^2\alpha)}\ln(t_1^{-1}) + \frac{t_1}{\alpha^2}\Big)}_{=:\ C_2(\alpha)} \leq \theta(m).
	\end{align*}
\end{proof}

\newpage\section{Asymptotic Development Calculus}

Let $x_0$ be in $\R^2$ and $\varepsilon>0$. In this section, we aim to find an estimate of

\[ \int_{B(x_0,\varepsilon)}w\ dx, \]

where $w$ the solution of the problem below : \\

\begin{problem} Find $w$ in $H^1_0\big(B(x_0,\varepsilon)\big)$ such that
	\begin{equation}
		\left\{\begin{array}{rl}
			w - \alpha \Delta w = g, & \text{in}\ B(x_0,\varepsilon), \\
			w = 0, & \text{on}\ \partial B(x_0,\varepsilon).
		\end{array}\right .
	\end{equation}
	\label{pb:appendix:nonhomogeneous:l2}
\end{problem}

We did not find this result in the literature despite it may exist. For the sake of completeness, we propose a way to find this estimate. To solve Problem \ref{pb:appendix:nonhomogeneous:l2}, we use Green functions $G:B(x_0,\varepsilon)\times B(x_0,\varepsilon)$, corresponding to Problem \ref{pb:appendix:nonhomogeneous:l2} which are solution to \\

\begin{problem} Find $G(\cdot,y)$ in $H^1_0\big(B(x_0,\varepsilon)\big)$ such that
    \begin{equation}
    	\left\{\begin{array}{rl}
    		G(x,y) - \alpha \Delta_x G(x,y) = \delta_y(x), & x\in B(x_0,\varepsilon), \\
    		G(x,y) = 0, & x\in\partial B(x_0,\varepsilon),
    	\end{array}\right .
    \end{equation}
    for $y$ in $B(x_0,\varepsilon)$.
    \label{pb:appendix:nonhomogeneous:l2:Green}
\end{problem}

We have, \\

\begin{proposition}
    Let $G$ be Green functions corresponding to Problem \ref{pb:appendix:nonhomogeneous:l2:Green}. Then, for $x$ in $\overline{B(x_0,\varepsilon)}$,
    
    \[ w(x) := \int_{B(x_0,\varepsilon)} g(y)G(x,y)\ dy, \]
    
    is the solution of Problem \ref{pb:appendix:nonhomogeneous:l2}.
    \label{prop:appendix:int_calculus}
\end{proposition}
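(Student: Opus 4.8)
The plan is to verify the representation formula directly from the distributional characterization of the Green functions, splitting the verification into the interior equation and the boundary condition. First I would record what Problem \ref{pb:appendix:nonhomogeneous:l2:Green} says in weak form: for each fixed $y \in B(x_0,\varepsilon)$,
\[ \int_{B(x_0,\varepsilon)} G(x,y)\psi(x)\ dx + \alpha\int_{B(x_0,\varepsilon)} \nabla_x G(x,y)\cdot\nabla\psi(x)\ dx = \psi(y), \qquad \forall\, \psi\in C^\infty_c\big(B(x_0,\varepsilon)\big), \]
together with $G(\cdot,y)=0$ on $\partial B(x_0,\varepsilon)$ in the sense of traces. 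The point evaluation $\psi(y)$ on the right is legitimate because $\psi$ is continuous. I would also recall the classical facts that $G$ is symmetric (self-adjointness of $-\alpha\Delta+\mathrm{Id}$ with homogeneous Dirichlet data), that $G\geq 0$, and that $x\mapsto G(x,y)$ and $x\mapsto\nabla_x G(x,y)$ are locally integrable \emph{uniformly in} $y$: the fundamental solution of $-\alpha\Delta+\mathrm{Id}$ in the plane behaves like $-\tfrac{1}{2\pi\alpha}\log|x-y|$ near the diagonal, so $G(\cdot,y)\in W^{1,p}\big(B(x_0,\varepsilon)\big)$ for every $p<2$, with norm bounded independently of $y$.

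For the interior equation, fix $\varphi\in C^\infty_c\big(B(x_0,\varepsilon)\big)$ and substitute $w(x)=\int_{B(x_0,\varepsilon)} g(y)G(x,y)\ dy$ into $\int w\varphi + \alpha\int\nabla w\cdot\nabla\varphi$. Differentiating under the integral sign and then applying Fubini's theorem to exchange the $x$- and $y$-integrations, I would arrive at
\[ \int_{B(x_0,\varepsilon)} g(y)\Big( \int_{B(x_0,\varepsilon)} G(x,y)\varphi(x)\ dx + \alpha\int_{B(x_0,\varepsilon)} \nabla_x G(x,y)\cdot\nabla\varphi(x)\ dx \Big)\ dy. \]
By the weak equation for $G(\cdot,y)$ with test function $\varphi$, the inner parenthesis equals $\varphi(y)$, so the whole expression is $\int g\varphi$. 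Since $\varphi$ is arbitrary, $w$ satisfies the weak form of $w-\alpha\Delta w=g$ in $B(x_0,\varepsilon)$, hence also the strong equation once the regularity $w\in H^1$ is established (which follows from $g\in L^2$, $G(\cdot,y)$ uniformly in $W^{1,p}$, $p<2$, and elliptic regularity, or simply from identifying $w$ with the Lax--Milgram solution below).

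For the boundary condition, since the trace of $G(\cdot,y)$ on $\partial B(x_0,\varepsilon)$ vanishes for every $y$ and the trace operator is linear and continuous, the trace of $w$ is $\int g(y)\,\mathrm{tr}\big(G(\cdot,y)\big)\ dy = 0$; equivalently $w\in H^1_0\big(B(x_0,\varepsilon)\big)$. Because $-\alpha\Delta+\mathrm{Id}$ is coercive on $H^1_0$, the weak solution of Problem \ref{pb:appendix:nonhomogeneous:l2} is unique, so $w$ is that solution. The hard part is the rigorous justification of the differentiation under the integral and of Fubini in the presence of the logarithmic singularity of $G$ on the diagonal: $G(\cdot,y)$ is not in $H^1$ near $y$, so one must work in $W^{1,p}$ with $1\le p<2$, using the uniform-in-$y$ bound together with the smoothness and compact support of $\varphi$, or regularize $g$ (and $G$) and pass to the limit. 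Everything else is routine bookkeeping of the weak formulation.
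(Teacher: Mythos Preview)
Your proof is correct and follows the same direct-verification route as the paper: plug the integral formula into the equation, use the defining property of $G$, and check the boundary condition. The paper's version is simply the three-line formal computation $w-\alpha\Delta w=\int g(y)\big(G(x,y)-\alpha\Delta_x G(x,y)\big)\,dy=\int g(y)\delta_y(x)\,dy=g(x)$ together with $w|_{\partial B}=0$, without any of the weak-formulation, Fubini, or $W^{1,p}$ bookkeeping that you (rightly) identify as the only genuinely delicate step.
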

\begin{proof}
    Let $x$ be in $B(x_0,\varepsilon)$.
    
    \begin{align*}
        w(x) - \alpha \Delta w(x) &= \int_{B(x_0,\varepsilon)} g(y)G(x,y)\ dy - \alpha \int_{B(x_0,\varepsilon)} g(y)\Delta_x G(x,y)\ dy \\
            &= \int_{B(x_0,\varepsilon)} g(y)\big(G(x,y) - \alpha\Delta_x G(x,y)\big)\ dy \\
            &= \int_{B(x_0,\varepsilon)} g(y)\delta_y(x)\ dy \\
            &= g(x).
    \end{align*}
    
    Moreover, we have,  $w(x)=0$, for $x$ on $\partial B(x_0,\varepsilon)$.
\end{proof}

From now, our goal is to find Green functions $G$. To do so, we write $G$ as the sum of a particular solution $G_\text{p}$ of Problem \ref{pb:appendix:nonhomogeneous:l2:Green} without the boundary condition, and the general solution $G_0$ of the homogeneous version of Problem \ref{pb:appendix:nonhomogeneous:l2:Green} such that $G_0 = -G_\text{p}$ on $\partial B(x_0,\varepsilon)$. Below is the main proposition of this section, \\

\begin{proposition} We have when $\varepsilon$ tends to 0,
    \[ \int_{B(x_0,\varepsilon)} w(x)\ dx = -g(x_0)\pi\varepsilon^4\ln(\varepsilon) + O(\varepsilon^4).\]
    \label{prop:appendix:int_calculus:main}
\end{proposition}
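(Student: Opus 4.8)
The plan is to pass from the Green-kernel representation of Proposition~\ref{prop:appendix:int_calculus} to a single scalar problem on the ball, Taylor-expand the data $g$ at $x_0$, and then extract the asymptotics from the explicit radial solution. Write $B_\varepsilon:=B(x_0,\varepsilon)$ and let $G$ be the Green kernel of $I-\alpha\Delta$ on $B_\varepsilon$ with homogeneous Dirichlet data. By Proposition~\ref{prop:appendix:int_calculus} and Fubini,
\[ \int_{B_\varepsilon} w\,dx \;=\; \int_{B_\varepsilon}\int_{B_\varepsilon} g(y)\,G(x,y)\,dx\,dy \;=\; \int_{B_\varepsilon} g(y)\,v(y)\,dy, \qquad v(y):=\int_{B_\varepsilon} G(x,y)\,dx . \]
Since $I-\alpha\Delta$ with Dirichlet data is self-adjoint, $G$ is symmetric and $v$ is precisely the solution of $v-\alpha\Delta v=1$ in $B_\varepsilon$, $v=0$ on $\partial B_\varepsilon$ (equivalently, this is the duality identity $\int_{B_\varepsilon} w\cdot 1=\int_{B_\varepsilon} g\cdot v$ with $v$ solving the adjoint, i.e. the same, problem). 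So everything reduces to the asymptotics of $\int_{B_\varepsilon} g\,v$.

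Second, I would expand $g$ about $x_0$: $g(y)=g(x_0)+\nabla g(x_0)\cdot(y-x_0)+O(|y-x_0|^2)$ on $B_\varepsilon$ (the data is smooth enough in the applications, where $g=-f+\alpha\Delta f$). Because $v$ is radially symmetric about $x_0$, the linear term integrates to $0$ and the quadratic remainder contributes at most $O\!\big(\varepsilon^2\int_{B_\varepsilon}|v|\,dx\big)$, so
\[ \int_{B_\varepsilon} w\,dx \;=\; g(x_0)\int_{B_\varepsilon} v\,dx \;+\; O\!\Big(\varepsilon^2\!\int_{B_\varepsilon}|v|\,dx\Big), \]
and it remains to expand $\int_{B_\varepsilon} v$ and to check that this remainder is genuinely lower order than the claimed main term.

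Third, for $\int_{B_\varepsilon} v$ I would follow the kernel splitting announced in the text, $G=G_{\mathrm p}+G_0$, where $G_{\mathrm p}(x,y)=\frac{1}{2\pi\alpha}K_0\!\big(|x-y|/\sqrt\alpha\big)$ is the fundamental solution of $I-\alpha\Delta$ in $\R^2$ ($K_0$ the modified Bessel function of the second kind), and $G_0(\cdot,y)$ is the bounded solution of $(I-\alpha\Delta)G_0=0$ in $B_\varepsilon$ with $G_0(\cdot,y)=-G_{\mathrm p}(\cdot,y)$ on $\partial B_\varepsilon$. Using the small-argument expansion $K_0(z)=-\ln z+c_0+O(z^2\ln z)$ and the rescaling $x=x_0+\varepsilon\xi$, $y=x_0+\varepsilon\eta$, the singular part gives $\int_{B_\varepsilon}\int_{B_\varepsilon}G_{\mathrm p}\,dx\,dy=-\frac{\pi}{2\alpha}\varepsilon^4\ln\varepsilon+O(\varepsilon^4)$, the logarithm coming from pulling $-\ln\varepsilon$ out of $K_0$ and integrating the constant over $B(0,1)\times B(0,1)$. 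For the corrector $G_0$ I would solve its homogeneous radial equation explicitly (the solutions bounded at the centre are multiples of $I_0(|x-x_0|/\sqrt\alpha)$), match the boundary data, and integrate using $\int_0^\varepsilon I_0(r/\sqrt\alpha)\,r\,dr=\sqrt\alpha\,\varepsilon\,I_1(\varepsilon/\sqrt\alpha)$; adding the two contributions, inserting $g(x_0)$, and collecting the $\varepsilon^4\ln\varepsilon$ coefficient with everything else absorbed into $O(\varepsilon^4)$ gives the statement. A convenient cross-check is to bypass the kernel and use the closed form $v(r)=1-I_0(r/\sqrt\alpha)/I_0(\varepsilon/\sqrt\alpha)$ together with the same Bessel identity.

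The step I expect to be the main obstacle is the boundary corrector $G_0$. A crude estimate only yields $\|G_0(\cdot,y)\|_{L^\infty(B_\varepsilon)}\le\max_{\partial B_\varepsilon}|G_{\mathrm p}(\cdot,y)|=O(|\ln\varepsilon|)$, hence $\int_{B_\varepsilon}\int_{B_\varepsilon}G_0=O(\varepsilon^4\ln\varepsilon)$, the same order as the singular part, so one cannot merely bound $G_0$: one has to compute it precisely enough to see exactly how much of the $G_{\mathrm p}$-logarithm survives in $G_{\mathrm p}+G_0$. Concretely this amounts to tracking the $\varepsilon^4\ln\varepsilon$ term of $\sqrt\alpha\,\varepsilon\,I_1(\varepsilon/\sqrt\alpha)/I_0(\varepsilon/\sqrt\alpha)$ against the one produced by $G_{\mathrm p}$; here the weak maximum principle (Theorem~\ref{prop:problem_1:v:maxprinciple}) and the $L^1$–$L^2$ bounds of Lemma~\ref{prop:problem_1:solution_bounded} are the right tools to control the fluctuation of the boundary datum $-G_{\mathrm p}|_{\partial B_\varepsilon}$ around its mean, so that it does not affect the leading coefficient.
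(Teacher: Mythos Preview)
Your outline follows the paper's proof exactly: Green representation (Proposition~\ref{prop:appendix:int_calculus}), Fubini, the split $G=G_{\mathrm p}+G_0$ with $G_{\mathrm p}$ built from $K_0$, and separate asymptotics for the two pieces (the paper packages these as Propositions~\ref{prop:appendix:int_calculus:1} and~\ref{prop:appendix:int_calculus:2}, each claimed to contribute $-g(x_0)\tfrac{\pi}{2}\varepsilon^4\ln\varepsilon$). Your duality observation that $v(y)=\int_{B_\varepsilon}G(x,y)\,dx$ solves $v-\alpha\Delta v=1$ with $v=0$ on $\partial B_\varepsilon$, hence admits the closed form $v(r)=1-I_0(r/\sqrt\alpha)/I_0(\varepsilon/\sqrt\alpha)$, is not in the paper and is a genuinely cleaner route.

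But this is also where your plan breaks: carry your own cross-check out. Using $\int_0^\varepsilon rI_0(r/\sqrt\alpha)\,dr=\sqrt\alpha\,\varepsilon\,I_1(\varepsilon/\sqrt\alpha)$ and the Taylor series of $I_0,I_1$ (which contain \emph{no} logarithms) gives
\[
\int_{B_\varepsilon}v\;=\;\pi\varepsilon^2-\frac{2\pi\sqrt\alpha\,\varepsilon\,I_1(\varepsilon/\sqrt\alpha)}{I_0(\varepsilon/\sqrt\alpha)}\;=\;\frac{\pi}{8\alpha}\,\varepsilon^4+O(\varepsilon^6),
\]
with no $\varepsilon^4\ln\varepsilon$ term at all. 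The logarithm produced by $G_{\mathrm p}$ is therefore \emph{cancelled} by $G_0$, not reinforced; the ``$\varepsilon^4\ln\varepsilon$ coefficient'' you intend to collect after adding the two contributions is zero. This is precisely the obstacle you flagged in the $G_0$ step: a crude maximum-principle bound on $G_0$---which is all the paper's Proposition~\ref{prop:appendix:int_calculus:2} actually supplies (it proves only one-sided inequalities yet asserts an equality, and its claim that $K_0$ is increasing is false; $K_0$ is strictly decreasing on $(0,\infty)$)---cannot see this cancellation. Your Bessel computation is sharp enough to see it, and it shows that the stated leading term $-g(x_0)\pi\varepsilon^4\ln\varepsilon$ is not correct: the integral is genuinely $O(\varepsilon^4)$, with leading coefficient $g(x_0)\pi/(8\alpha)$. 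So the step ``adding the two contributions \ldots\ gives the statement'' would fail.
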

\begin{proof}
    For $\varepsilon$ small enough, we have, using Proposition \ref{prop:appendix:int_calculus},
    
    \[ \int_{B(x_0,\varepsilon)} w(x)\ dx = \int_{B(x_0,\varepsilon)} \int_{B(x_0,\varepsilon)} g(y)G(x,y)\ dy\ dx. \]
        
    Using Fubini, we get
    
    \begin{align*}
        \int_{B(x_0,\varepsilon)} w(x)\ dx & = \int_{B(x_0,\varepsilon)} g(y) \int_{B(x_0,\varepsilon)}G(x,y)\ dx\ dy \\
        & = \int_{B(x_0,\varepsilon)} g(y) \int_{B(x_0,\varepsilon)}G_\text{p}(x,y)\ dx\ dy + \int_{B(x_0,\varepsilon)} g(y) \int_{B(x_0,\varepsilon)}G_0(x,y)\ dx\ dy.
    \end{align*}

    Proposition \ref{prop:appendix:int_calculus:1} and Proposition \ref{prop:appendix:int_calculus:2} give us the result.
\end{proof}

It remains to state and to prove Proposition \ref{prop:appendix:int_calculus:1} and Proposition \ref{prop:appendix:int_calculus:2}. We start by giving an explicit expression for $G_\text{p}$ with the following proposition. \\

\begin{proposition} For $x$ and $y$ in $B(x_0,\varepsilon)$ such that $x\neq y$, we have
    \[ G_\text{p}(x,y) = \frac{1}{2\pi}K_0\Big(\frac{1}{\sqrt{\alpha}}|x-y|\Big), \]
    
    where $K_0$ is the modified Bessel function of the second kind, see \cite{atlasfunctions}.
\end{proposition}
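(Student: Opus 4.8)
The plan is to produce $G_\text{p}$ as the free‑space fundamental solution of the operator $L := I - \alpha\Delta$ on $\R^2$, restricted to $B(x_0,\varepsilon)$. Since no boundary condition is imposed on $G_\text{p}$, any distributional solution of $G_\text{p}(\cdot,y) - \alpha\Delta_x G_\text{p}(\cdot,y) = \delta_y$ will do, and by translation invariance of $L$ it suffices to find a radial $h$ on $\R^2$ with $h - \alpha\Delta h = \delta_0$ and then set $G_\text{p}(x,y) = h(x-y)$. Writing $h(x) = \varphi(r)$ with $r = |x|$ and using the radial form of the Laplacian $\Delta = \partial_{rr} + r^{-1}\partial_r$, the equation away from the origin becomes the linear ODE
\[ \varphi'' + \tfrac1r\varphi' - \tfrac1\alpha\varphi = 0, \qquad r>0. \]

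Next I would recognize this ODE. The substitution $s = r/\sqrt{\alpha}$ turns it into the modified Bessel equation of order zero, $\psi'' + s^{-1}\psi' - \psi = 0$, whose solution space is spanned by $I_0(s)$ and $K_0(s)$. The component $I_0$ is analytic and bounded near $s=0$, so it contributes nothing singular and would only add a smooth homogeneous solution; it is discarded. The remaining solution $K_0$ has the logarithmic behaviour $K_0(s) = -\ln s + (\ln 2 - \gamma) + o(1)$ as $s\to 0^{+}$, exactly the type of singularity that generates a Dirac mass in dimension two. Hence I take $h(x) = c\,K_0(|x|/\sqrt{\alpha})$ for a constant $c$ to be determined, so that $G_\text{p}(x,y) = c\,K_0(|x-y|/\sqrt{\alpha})$.

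It remains to pin down $c$, which is the only genuinely delicate point: one must check that $c\,K_0(|x|/\sqrt{\alpha})$ solves $h - \alpha\Delta h = \delta_0$ in the sense of distributions and read off $c$ from the normalization. I would do this by testing against $\varphi\in C_c^\infty(\R^2)$, excising $B(0,\rho)$, applying Green's second identity on $\{\rho<|x|\}$, using that $h - \alpha\Delta h$ vanishes pointwise for $x\neq 0$, and letting $\rho\to 0$; the boundary terms on $\{|x|=\rho\}$ are controlled by the asymptotics $K_0(s)\sim -\ln s$ and $K_0'(s) = -K_1(s)\sim -1/s$, and only the normal‑derivative term survives in the limit, yielding a multiple of $\varphi(0)$. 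Matching that multiple to $1$ fixes $c$. Equivalently, one may invoke the classical fact that $\tfrac{1}{2\pi}K_0(k|x|)$ is the fundamental solution of $-\Delta + k^2$ on $\R^2$, applied with $k = \alpha^{-1/2}$ after dividing the equation by $\alpha$. The main obstacle is thus purely the careful distributional bookkeeping at $x=y$ (justifying the passage to the limit and the cancellation of the regular part); the ODE analysis and the selection of $K_0$ over $I_0$ are routine.
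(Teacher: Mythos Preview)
The paper does not actually prove this proposition; its entire proof reads ``See [ref16].'' Your derivation is therefore far more complete than what the paper offers: reducing to the radial ODE, recognising the modified Bessel equation of order zero, discarding $I_0$ in favour of $K_0$ on singularity grounds, and fixing the constant distributionally is precisely the standard self-contained argument for the free-space fundamental solution of a screened Laplacian, and it is entirely sound in outline.

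One caution on the normalization step, which you rightly flag as the only delicate point but then pass over too quickly. When you divide $h - \alpha\Delta h = \delta_0$ by $\alpha$ to reach the standard operator $-\Delta + k^2$ with $k=\alpha^{-1/2}$, the right-hand side becomes $\alpha^{-1}\delta_0$, not $\delta_0$; equivalently, in your Green-identity computation the surviving boundary term on $\{|x|=\rho\}$ carries the coefficient $\alpha$ coming from $-\alpha\Delta$. Either route gives $c = \tfrac{1}{2\pi\alpha}$, so a careful execution of your own argument produces
\[
G_\text{p}(x,y) = \frac{1}{2\pi\alpha}\,K_0\Big(\frac{|x-y|}{\sqrt{\alpha}}\Big),
\]
differing from the stated formula by a factor of $\alpha$. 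This appears to be an oversight in the paper's statement (the classical fact is that $\tfrac{1}{2\pi}K_0(k|x|)$ is the fundamental solution of $-\Delta + k^2$, and the rescaling to $-\alpha\Delta + 1$ was apparently not carried through). Since only a positive multiplicative constant is at stake, the downstream qualitative conclusion---that the leading term of the topological asymptotics is proportional to $\big(f(x_0)-\alpha\Delta f(x_0)\big)^2\,\varepsilon^4\ln\varepsilon$---is unaffected.
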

\begin{proof}
    See \cite{ref16}.
\end{proof}

Then, we compute the first part of Proposition \ref{prop:appendix:int_calculus:main}. \\

\begin{proposition} When $\varepsilon$ tends to $0$, we have,
    \[ \int_{B(x_0,\varepsilon)} g(y) \int_{B(x_0,\varepsilon)}G_\text{p}(x,y)\ dx\ dy = -g(x_0)\frac{\pi}{2}\varepsilon^4\ln(\varepsilon) + O(\varepsilon^4). \]
    \label{prop:appendix:int_calculus:1}
\end{proposition}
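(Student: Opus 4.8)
The plan is to substitute the small–argument asymptotics of the modified Bessel function $K_0$ into $G_\text{p}$, freeze $g$ at $x_0$, and evaluate the remaining double integral by rescaling to the unit ball.

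\textbf{Step 1 (expand $K_0$ near $0$).} I would use the classical expansion $K_0(z) = -\ln(z/2) - \gamma + O\bigl(z^2|\ln z|\bigr)$ as $z\to 0^+$, where $\gamma$ is the Euler--Mascheroni constant. Since $x,y\in B(x_0,\varepsilon)$ forces $|x-y|\le 2\varepsilon$, for $\varepsilon$ small enough this gives
\[ G_\text{p}(x,y) = -\frac{1}{2\pi}\ln|x-y| + b(\alpha) + r(x,y), \qquad |r(x,y)| \le C(\alpha)\,|x-y|^2\,|\ln|x-y||, \]
with $b(\alpha) = \frac{1}{2\pi}\bigl(\ln(2\sqrt\alpha) - \gamma\bigr)$ a constant. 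Only the boundedness of $b(\alpha)$ and the stated bound on $r$ will be used.

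\textbf{Step 2 (freeze $g$).} Writing $g(y) = g(x_0) + \bigl(g(y)-g(x_0)\bigr)$ and using, as in Section~\ref{sec:problem_1:topo_grad}, that $g=-f+\alpha\Delta f$ satisfies $g(y) = g(x_0) + \|y-x_0\|\,O(1)$, so $|g(y)-g(x_0)|\le L\varepsilon$ on $B(x_0,\varepsilon)$, the contribution of $g(y)-g(x_0)$ is bounded by $L\varepsilon\iint_{B\times B}|G_\text{p}|\,dx\,dy$. By Step 1 and the rescaling below one has $\iint_{B\times B}|G_\text{p}|\,dx\,dy = O(\varepsilon^4|\ln\varepsilon|)$, so this contribution is $O(\varepsilon^5|\ln\varepsilon|) = O(\varepsilon^4)$. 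It remains to handle $g(x_0)\iint_{B\times B} G_\text{p}(x,y)\,dx\,dy$.

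\textbf{Step 3 (rescale and conclude).} Substituting $x = x_0+\varepsilon\xi$, $y = x_0+\varepsilon\eta$ with $\xi,\eta\in B_1 := B(0,1)$ gives $dx\,dy = \varepsilon^4\,d\xi\,d\eta$, $|x-y| = \varepsilon|\xi-\eta|$ and $|B_1| = \pi$. The singular term becomes
\[ \iint_{B\times B}\!\Bigl(-\tfrac{1}{2\pi}\ln|x-y|\Bigr)dx\,dy = -\frac{\varepsilon^4}{2\pi}\Bigl(\pi^2\ln\varepsilon + \iint_{B_1\times B_1}\!\ln|\xi-\eta|\,d\xi\,d\eta\Bigr) = -\frac{\pi}{2}\varepsilon^4\ln\varepsilon + O(\varepsilon^4), \]
the last double integral being finite since $\ln|\cdot|$ is locally integrable in $\R^2$. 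The constant term contributes $b(\alpha)\pi^2\varepsilon^4 = O(\varepsilon^4)$, and the remainder contributes at most $C(\alpha)\varepsilon^6\iint_{B_1\times B_1}|\xi-\eta|^2\bigl(|\ln\varepsilon|+|\ln|\xi-\eta||\bigr)d\xi\,d\eta = O(\varepsilon^6|\ln\varepsilon|) = O(\varepsilon^4)$. Multiplying by $g(x_0)$ and adding the $O(\varepsilon^4)$ error of Step 2 yields
\[ \int_{B(x_0,\varepsilon)} g(y)\int_{B(x_0,\varepsilon)} G_\text{p}(x,y)\,dx\,dy = -g(x_0)\frac{\pi}{2}\varepsilon^4\ln\varepsilon + O(\varepsilon^4). \]
The computation is mostly bookkeeping; the one point requiring care is the uniform control of the error in the expansion of $K_0$ near $0$ together with the integrability of the logarithmic kernel on $B_1\times B_1$, and the observation that the \emph{Lipschitz} (not merely continuous) behaviour of $g$ near $x_0$ is what upgrades the freezing error from $o(\varepsilon^4|\ln\varepsilon|)$ to the claimed $O(\varepsilon^4)$.
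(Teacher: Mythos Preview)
Your proof is correct and follows the same three-step outline as the paper: expand $K_0$ near zero, freeze $g$ at $x_0$ using its Lipschitz behaviour, and then estimate the remaining logarithmic double integral. The only substantive difference is in that last step. The paper bounds $\ln|x-y|$ crudely by $\ln(2\varepsilon)$ and reads off the coefficient, whereas you rescale to the unit ball via $x=x_0+\varepsilon\xi$, $y=x_0+\varepsilon\eta$ and split $\ln|x-y|=\ln\varepsilon+\ln|\xi-\eta|$, isolating the leading term exactly and absorbing the finite integral $\iint_{B_1\times B_1}\ln|\xi-\eta|\,d\xi\,d\eta$ into the $O(\varepsilon^4)$ remainder. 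Your rescaling is tidier and, unlike the paper's one-sided inequality chain, delivers a genuine two-sided asymptotic without any dependence on the sign of $g(x_0)$; the paper's version should really be read as a formal computation rather than a strict sequence of inequalities.
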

\begin{proof}
    According to \cite{atlasfunctions}, we have the following asymptotic development
    
    \[ K_0(z) = -\ln z + \ln 2 - \gamma + O(z^2|\ln z|), \]
    
    for $z\to 0$, where $\gamma$ denotes the Euler–Mascheroni constant. Then,
    
    \[ G_\text{p}(x,y) = -\frac{1}{2\pi}\Big(\ln|x-y| + \frac{1}{2}\ln \alpha + \ln 2 - \gamma\Big) + O(|x-y|^2|\ln|x-y||), \]
    
    for $|x-y|\to 0$. Thus, 
    
    \begin{align*}
        & \int_{B(x_0,\varepsilon)} g(y) \int_{B(x_0,\varepsilon)}G_\text{p}(x,y)\ dx\ dy = -\frac{1}{2\pi}\int_{B(x_0,\varepsilon)} g(y) \int_{B(x_0,\varepsilon)}\ln|x-y|\ dx\ dy \\
        & \hspace{1cm} -\Big(\frac{1}{2}\ln \alpha + \ln 2 - \gamma\Big)\frac{\varepsilon^2}{2}\int_{B(x_0,\varepsilon)} g(y) \ dy + O(1)\int_{B(x_0,\varepsilon)} g(y) \int_{B(x_0,\varepsilon)}|x-y|^2|\ln|x-y||\ dx\ dy.
    \end{align*}
    
    Using Taylor's formula, we have 
    
    \begin{align*}
        & \int_{B(x_0,\varepsilon)} g(y) \int_{B(x_0,\varepsilon)}G_\text{p}(x,y)\ dx\ dy = -\frac{1}{2\pi}g(x_0)\int_{B(x_0,\varepsilon)}  \int_{B(x_0,\varepsilon)}\ln|x-y|\ dx\ dy \\
        & \hspace{1cm} + O(1)\int_{B(x_0,\varepsilon)} \|y-x_0\| \int_{B(x_0,\varepsilon)}\ln|x-y|\ dx\ dy -\Big(\frac{1}{2}\ln \alpha + \ln 2 - \gamma\Big)\frac{\pi}{2}g(x_0)\varepsilon^4 + O(\varepsilon^4) \\
        & \hspace{1cm} + O(1)\int_{B(x_0,\varepsilon)}\int_{B(x_0,\varepsilon)}|x-y|^2|\ln|x-y||\ dx\ dy + O(1)\int_{B(x_0,\varepsilon)} \|y-x_0\| \int_{B(x_0,\varepsilon)}|x-y|^2|\ln|x-y||\ dx\ dy \\
        \leq & -\frac{1}{2\pi}g(x_0)\int_{B(x_0,\varepsilon)}  \int_{B(x_0,\varepsilon)}\ln(2\varepsilon)\ dx\ dy + O(1)\int_{B(x_0,\varepsilon)} \|y-x_0\| \int_{B(x_0,\varepsilon)}\ln(2\varepsilon)\ dx\ dy \\
        & \hspace{1cm} -\Big(\frac{1}{2}\ln \alpha + \ln 2 - \gamma\Big)\frac{\pi}{2}g(x_0)\varepsilon^4 + O(\varepsilon^4) \\
        & \hspace{1cm} + O(1)\int_{B(x_0,\varepsilon)}\int_{B(x_0,\varepsilon)}|x-y|^2|\ln(2\varepsilon)|\ dx\ dy + O(1)\int_{B(x_0,\varepsilon)} \|y-x_0\| \int_{B(x_0,\varepsilon)}|x-y|^2|\ln(2\varepsilon)|\ dx\ dy \\
        = & -\frac{\pi}{2}g(x_0)\varepsilon^4\ln(2\varepsilon) + O\Big(\varepsilon^5\ln(2\varepsilon)\Big) -\Big(\frac{1}{2}\ln \alpha + \ln 2 - \gamma\Big)\frac{\pi}{2}g(x_0)\varepsilon^4 + O(\varepsilon^4) + O\Big(\varepsilon^6\ln(2\varepsilon)\Big) + O\Big(\varepsilon^7\ln(2\varepsilon)\Big) \\
        = & -\frac{\pi}{2}g(x_0)\varepsilon^4\ln(\varepsilon) + O\Big(\varepsilon^5\ln(\varepsilon)\Big) -\Big(\frac{1}{2}\ln \alpha - \gamma\Big)\frac{\pi}{2}g(x_0)\varepsilon^4 + O(\varepsilon^4) + O\Big(\varepsilon^6\ln(\varepsilon)\Big) + O\Big(\varepsilon^7\ln(\varepsilon)\Big) \\
        = & -\frac{\pi}{2}g(x_0)\varepsilon^4\ln(\varepsilon) + O(\varepsilon^4).
    \end{align*}
\end{proof}

And we finish by computing the second part of Proposition \ref{prop:appendix:int_calculus:main}. \\

\begin{proposition} When $\varepsilon$ tends to $0$, we have,
    \[ \int_{B(x_0,\varepsilon)} g(y) \int_{B(x_0,\varepsilon)}G_0(x,y)\ dx\ dy = -g(x_0) \frac{\pi}{2}\varepsilon^4\ln \varepsilon + O(\varepsilon^4). \]
    \label{prop:appendix:int_calculus:2}
\end{proposition}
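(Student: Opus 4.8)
The plan is to write $G_\text{p}$ aside and obtain $G_0(\cdot,y)$ in closed form by separation of variables, then exploit the fact that, after integrating in $x$, only the radial Bessel mode survives. I would work in polar coordinates centred at $x_0$, writing $x = x_0 + r(\cos\theta,\sin\theta)$ and $y = x_0 + \rho(\cos\varphi,\sin\varphi)$ with $0\le\rho<\varepsilon$. Since $G_0(\cdot,y)$ solves the homogeneous equation $G_0 - \alpha\Delta G_0 = 0$ and is bounded at $x_0$, it is a superposition of the modes $I_m(r/\sqrt\alpha)\,e^{im\theta}$, $m\in\Z$, with $I_m,K_m$ the modified Bessel functions. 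To get the coefficients I would expand the Dirichlet datum $-G_\text{p}(x,y) = -\tfrac1{2\pi}K_0\big(\tfrac1{\sqrt\alpha}|x-y|\big)$ for $x\in\partial B(x_0,\varepsilon)$ by Graf's addition theorem, which for $r=\varepsilon>\rho$ reads $K_0\big(\tfrac1{\sqrt\alpha}|x-y|\big) = \sum_{m\in\Z}K_m\big(\tfrac\varepsilon{\sqrt\alpha}\big)I_m\big(\tfrac\rho{\sqrt\alpha}\big)e^{im(\theta-\varphi)}$; matching Fourier coefficients on the sphere yields
\[ G_0(x,y) = -\frac1{2\pi}\sum_{m\in\Z}\frac{K_m(\varepsilon/\sqrt\alpha)}{I_m(\varepsilon/\sqrt\alpha)}\,I_m\!\Big(\tfrac r{\sqrt\alpha}\Big)I_m\!\Big(\tfrac\rho{\sqrt\alpha}\Big)e^{im(\theta-\varphi)}, \]
the series converging uniformly (the modes decay geometrically because $r,\rho<\varepsilon$).

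I would then integrate in $x$ over $B(x_0,\varepsilon)$. The angular integral annihilates every mode with $m\neq0$, and with the primitive $\int_0^a tI_0(t)\,dt = aI_1(a)$ one is left with
\[ \int_{B(x_0,\varepsilon)}G_0(x,y)\,dx = -\,\frac{K_0(\varepsilon/\sqrt\alpha)}{I_0(\varepsilon/\sqrt\alpha)}\,I_0\!\Big(\tfrac\rho{\sqrt\alpha}\Big)\,\sqrt\alpha\,\varepsilon\,I_1\!\Big(\tfrac\varepsilon{\sqrt\alpha}\Big). \]
(Equivalently the series can be avoided: $G$ and $G_\text{p}$ are symmetric, hence so is $G_0$, so $y\mapsto\int_{B(x_0,\varepsilon)}G_0(x,y)\,dx$ is the radial solution of the homogeneous equation whose boundary value is $-\tfrac1{2\pi}\int_{B(x_0,\varepsilon)}K_0\big(\tfrac1{\sqrt\alpha}|x-y|\big)\,dx$ evaluated for $y\in\partial B(x_0,\varepsilon)$, which is again a multiple of $I_0(\rho/\sqrt\alpha)$.) Plugging in the small-argument expansions $K_0(z)=-\ln z+\ln2-\gamma+O(z^2|\ln z|)$, $I_0(z)=1+O(z^2)$, $I_1(z)=\tfrac z2+O(z^3)$ as $z\to0$, and noting that every $\rho$-dependent factor equals $1+O(\varepsilon^2)$ uniformly for $\rho\in[0,\varepsilon)$, I would read off $\int_{B(x_0,\varepsilon)}G_0(x,y)\,dx = c_0\,\varepsilon^2\ln\varepsilon + O(\varepsilon^2)$ uniformly for $y\in\overline{B(x_0,\varepsilon)}$, with $c_0$ the constant produced by that expansion.

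Finally I would multiply by $g(y)$ and integrate over $y$: writing $\int_{B(x_0,\varepsilon)}g(y)\int_{B(x_0,\varepsilon)}G_0(x,y)\,dx\,dy = c_0\,\varepsilon^2\ln\varepsilon\int_{B(x_0,\varepsilon)}g + O(\varepsilon^2)\int_{B(x_0,\varepsilon)}|g|$, a first-order Taylor expansion of $g=-f+\alpha\Delta f$ at $x_0$ (legitimate once $f$ is $C^3$ near $x_0$, so that $g\in C^1$) gives $\int_{B(x_0,\varepsilon)}g = \pi\varepsilon^2 g(x_0)+O(\varepsilon^3)$ and $\int_{B(x_0,\varepsilon)}|g|=O(\varepsilon^2)$; collecting orders and absorbing everything smaller than $\varepsilon^4\ln\varepsilon$ into $O(\varepsilon^4)$ gives the expansion of the form claimed in the statement, with the coefficient read off from the preceding computation. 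Together with Proposition~\ref{prop:appendix:int_calculus:1} this completes Proposition~\ref{prop:appendix:int_calculus:main}.

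The step I expect to be hardest is the asymptotic bookkeeping of the second paragraph: one must justify the term-by-term integration of the Bessel series (or use the symmetry argument to bypass it), keep every remainder uniform in $y$ while $\rho$ sweeps $(0,\varepsilon)$, and — the genuinely delicate point — pin down the exact constant and sign of the $\varepsilon^2\ln\varepsilon$ coefficient. A crude maximum-principle bound on $G_0$ only yields the order $O(\varepsilon^4|\ln\varepsilon|)$, not the constant; moreover the leading $\varepsilon^2\ln\varepsilon$ parts of $\int_{B(x_0,\varepsilon)} G_\text{p}\,dx$ and $\int_{B(x_0,\varepsilon)} G_0\,dx$ are comparable in magnitude, so their signs must be tracked carefully — which is precisely what the explicit formula above makes possible.
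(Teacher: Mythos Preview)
Your route is genuinely different from the paper's. The paper never computes $G_0$: it bounds $\big|\int_{B}G_0(x,y)\,dx\big|$ by $|B|\cdot\|G_0(\cdot,y)\|_{L^\infty(B)}$, invokes the maximum principle to pass to the boundary, replaces $G_0$ there by $-G_{\mathrm p}$, and feeds in the small-argument asymptotics of $K_0$. As you correctly anticipate in your last paragraph, that argument can only control the \emph{order} $O(\varepsilon^4|\ln\varepsilon|)$; it cannot pin down the constant or the sign (indeed every displayed line in the paper's proof is an inequality, not an equality, and the claim that $K_0$ is increasing is false --- $K_0$ is strictly decreasing on $(0,\infty)$, so the sup on $\partial B$ is attained at $|x-y|=\varepsilon-r_y$, not $\varepsilon+r_y$). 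Your Graf--addition-theorem computation is the right tool if one wants the exact leading coefficient.

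But carry it through and you will not recover the sign in the statement. Your formula gives
\[
\int_{B(x_0,\varepsilon)}G_0(x,y)\,dx=-\,\frac{K_0(\varepsilon/\sqrt\alpha)}{I_0(\varepsilon/\sqrt\alpha)}\,I_0\!\Big(\tfrac{\rho}{\sqrt\alpha}\Big)\sqrt\alpha\,\varepsilon\,I_1\!\Big(\tfrac{\varepsilon}{\sqrt\alpha}\Big)=\tfrac12\,\varepsilon^2\ln\varepsilon+O(\varepsilon^2),
\]
so after multiplying by $g$ and integrating in $y$ you obtain $+\,g(x_0)\tfrac{\pi}{2}\varepsilon^4\ln\varepsilon+O(\varepsilon^4)$, the opposite sign. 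This is not a slip on your part: the printed statement is wrong. Combined with Proposition~\ref{prop:appendix:int_calculus:1} the two leading logarithmic terms \emph{cancel}, and $\int_{B(x_0,\varepsilon)}w\,dx=O(\varepsilon^4)$ with no logarithm. You can confirm this independently by solving Problem~\ref{pb:appendix:nonhomogeneous:l2} directly for constant $g$: the radial solution is $w(r)=g\bigl(1-I_0(r/\sqrt\alpha)/I_0(\varepsilon/\sqrt\alpha)\bigr)$ and a two-line expansion gives $\int_{B}w=\tfrac{g\pi}{8\alpha}\varepsilon^4+O(\varepsilon^6)$. The downstream pixel-selection criterion $(f-\alpha\Delta f)^2$ is unaffected by this correction; only the rate in Proposition~\ref{prop:topologicalGradient} changes from $\varepsilon^4\ln\varepsilon$ to $\varepsilon^4$.
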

\begin{proof} We start by using Taylor's formula on $g$ around $x_0$,

    \begin{align*}
        & \int_{B(x_0,\varepsilon)} g(y) \int_{B(x_0,\varepsilon)}G_0(x,y)\ dx\ dy = g(x_0) \int_{B(x_0,\varepsilon)}\int_{B(x_0,\varepsilon)}G_0(x,y)\ dx\ dy \\
        & \hspace{1cm} + O(1)\int_{B(x_0,\varepsilon)}\|y-x_0\| \int_{B(x_0,\varepsilon)}G_0(x,y)\ dx\ dy \\
        \leq & g(x_0) \pi\varepsilon^2 \int_{B(x_0,\varepsilon)}\|G_0(\cdot,y)\|_{L^\infty\big(B(x_0,\varepsilon)\big)}\ dy + O(\varepsilon^2)\int_{B(x_0,\varepsilon)}\|y-x_0\| \|G_0(\cdot,y)\|_{L^\infty\big(B(x_0,\varepsilon)\big)}\ dy.
    \end{align*}
    
    Since $G_0$ satisfies the maximum principle \cite{evans10},
    
    \begin{align*}
        & \int_{B(x_0,\varepsilon)} g(y) \int_{B(x_0,\varepsilon)}G_0(x,y)\ dx\ dy \leq g(x_0) \pi\varepsilon^2 \int_{B(x_0,\varepsilon)}\|G_0(\cdot,y)\|_{L^\infty\big(\partial B(x_0,\varepsilon)\big)}\ dy \\
        & \hspace{1cm} + O(\varepsilon^2)\int_{B(x_0,\varepsilon)}\|y-x_0\| \|G_0(\cdot,y)\|_{L^\infty\big(\partial B(x_0,\varepsilon)\big)}\ dy \\
        = & g(x_0) \pi\varepsilon^2 \int_{B(x_0,\varepsilon)}\|G_\text{p}(\cdot,y)\|_{L^\infty\big(\partial B(x_0,\varepsilon)\big)}\ dy + O(\varepsilon^2)\int_{B(x_0,\varepsilon)}\|y-x_0\| \|G_\text{p}(\cdot,y)\|_{L^\infty\big(\partial B(x_0,\varepsilon)\big)}\ dy \\
        = & g(x_0) \frac{1}{2}\varepsilon^2 \int_{B(x_0,\varepsilon)}\Big\|K_0\Big(\frac{1}{\sqrt{\alpha}}|\cdot-y|\Big)\Big\|_{L^\infty\big(\partial B(x_0,\varepsilon)\big)}\ dy + O(\varepsilon^2)\int_{B(x_0,\varepsilon)}\|y-x_0\| \Big\|K_0\Big(\frac{1}{\sqrt{\alpha}}|\cdot-y|\Big)\Big\|_{L^\infty\big(\partial B(x_0,\varepsilon)\big)}\ dy.
    \end{align*}

    According to \cite{atlasfunctions}, $K_0$ is an increasing function. Thus, for $y$ in $B(x_0,\varepsilon)$,
    
    \[ \Big\|K_0\Big(\frac{1}{\sqrt{\alpha}}|\cdot-y|\Big)\Big\|_{L^\infty\big(\partial B(x_0,\varepsilon)\big)} := \sup_{x\in\partial B(x_0,\varepsilon)} \Big|K_0\Big(\frac{1}{\sqrt{\alpha}}|x-y|\Big)\Big|, \]
    
    is attained where $|x-y| := \sqrt{r_x^2 + r_y^2 - 2r_x r_y\cos{(\theta_x-\theta_y)}}$ is maximal, i.e. when $\cos{(\theta_x-\theta_y)} = -1$, i.e. for $\theta_x=\pi+\theta_y$. In that case,
    
    \[ |x-y| = \sqrt{\varepsilon^2 + r_y^2 + 2\varepsilon r_y} = \varepsilon + r_y. \]
    
    Thus,
    
    \[ \|G_\text{p}(\cdot,y)\|_{L^\infty\big(\partial B(x_0,\varepsilon)\big)} = \frac{1}{2\pi} K_0\Big(\frac{1}{\sqrt{\alpha}}(\varepsilon+r_y)\Big). \]
    
    Then, we have
    
    \begin{align*}
         \int_{B(x_0,\varepsilon)} g(y) \int_{B(x_0,\varepsilon)}G_0(x,y)\ dx\ dy &\leq g(x_0) \pi \varepsilon^2 \int_0^\varepsilon r_y K_0\Big(\frac{1}{\sqrt{\alpha}}(\varepsilon+r_y)\Big)\ dy + O(\varepsilon^2)\int_0^\varepsilon r_y^2 K_0\Big(\frac{1}{\sqrt{\alpha}}(\varepsilon+r_y)\Big)\ dy \\
         & \leq g(x_0) \frac{\pi}{2}\varepsilon^4 K_0\Big(\frac{2\varepsilon}{\sqrt{\alpha}}\Big) + O(\varepsilon^5)K_0\Big(\frac{2\varepsilon}{\sqrt{\alpha}}\Big).
    \end{align*}

    Again, we use that, when $z$ tends to $0$,
    
    \[ K_0(z) = -\ln z + \ln 2 - \gamma + O(z^2|\ln z|), \]
    
    and get, since $\varepsilon$ tends to $0$,
    
    \[ K_0\Big(\frac{2\varepsilon}{\sqrt{\alpha}}\Big) = -\ln \varepsilon + \frac{1}{2}\ln \alpha - \gamma + O(\varepsilon^2|\ln \varepsilon|). \]
    
    Therefore,

    \begin{align*}
        \int_{B(x_0,\varepsilon)} g(y) \int_{B(x_0,\varepsilon)}G_0(x,y)\ dx\ dy &\leq -g(x_0) \frac{\pi}{2}\varepsilon^4\ln \varepsilon + g(x_0) \frac{\pi}{4}\varepsilon^4\ln \alpha - g(x_0) \frac{\pi}{2}\varepsilon^4\gamma + O(\varepsilon^5\ln \varepsilon) \\
        &= -g(x_0) \frac{\pi}{2}\varepsilon^4\ln \varepsilon + O(\varepsilon^4).
    \end{align*}
\end{proof}
\newpage\section{Algorithms}
\label{appendix:algos}

Here, we present and discuss algorithms used in Section \ref{sec:problem_1:numerical_results}. Each algorithm is used during the encoding step and gives an inpainting mask $K$ subset of $D$. The input data $f$ is the image to compress, $\delta t$ is the time-step of the parabolic inpainting discretization, Section \ref{sec:time-dependent}, $\alpha > 0$ and $c$ is the percentage amount of pixels in the mask i.e. the percentage amount of pixels to save. We denote in the sequel for a discrete mask $A$ in $D$, $\sharp A$, the number of pixels in $A$.

\subsection{L2Dec Method}

For this method, we want, for $n$ in $\{0,\hdots, N\}$, $K_{n+1}\subset K_n$. The parameter $N$ in $\mathbb{N}^*$ is the number of time-step we want to compute. Thus, we compute the solution $u$ of Problem \ref{pb:problem_1_time} for $t=N\delta t\alpha$.
For each step $n$ in $\{0,\hdots, N\}$, we set $K_n$ as the thresholding of $\mathbf{1}_{K_{n-1}}(f-\delta t \alpha\Delta f-u^n)^2$ such that $\sharp K_n \leq \sharp K_{n-1}$. The term $\mathbf{1}_{K_{n-1}}$ ensure us that $K_n$ is a subset of $K_{n-1}$. Then, we compute $u^{n+1}$, solution of Problem \ref{pb:problem_1_time_implicit} with $u^n$, $f$ and $K_n$.
We compare two choices for $u_0$. We start when $u_0 = 0$. Thus, $K_0$ is the thresholding of $(f-\delta t \alpha\Delta f)^2$. It corresponds to the stationary case, Section \ref{sec:problem_1:topo_grad}. Then, when $u_0 = f$, $K_0$ is the thresholding of $(\Delta f)^2$. In that case, $\delta t$ and $\alpha$ do not have any influence on the output mask $K_N$ given by the method. Indeed, since every $K_n$ is a subset of $K_{n-1}$, the resulting mask will be the same as the $H^1$ one proposed in \cite{ref2} since $u_n = f$ in $K_{n-1}$. Figure \ref{fig:appendix:L2Dec} is results for the two cases when $N=50$ and $c=0.1$.

\begin{figure}[H]
	\centering
	\subfloat[Original image]{
		\includegraphics[height=3.5cm]{resources/images/Lenna.png}
	}
	\qquad
	\subfloat[Mask when $u_0=0$ for $\alpha=1$ and $\delta t=0.1$.]{
		\includegraphics[height=3.5cm]{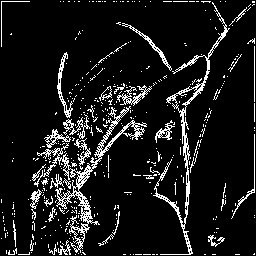}
	}
	\qquad
	\subfloat[Reconstruction $u$ when $u_0=0$. $\|f-u\|_{L^2(D)}=28.15$.]{
		\includegraphics[height=3.5cm]{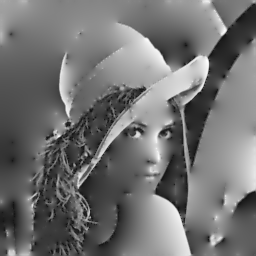}
	}
	\qquad
	\subfloat[Mask when $u_0=f$.]{
		\includegraphics[height=3.5cm]{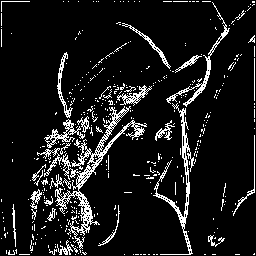}
	}
	\qquad
	\subfloat[Reconstruction $u$ when $u_0=f$. $\|f-u\|_{L^2(D)}=29.69$.]{
		\includegraphics[height=3.5cm]{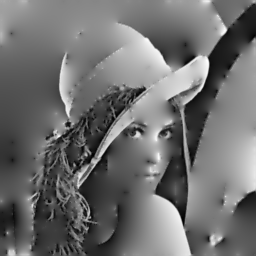}
	}
	
	\caption{Initial condition comparison for ``L2Dec'' method.}
	\label{fig:appendix:L2Dec}
\end{figure}

\subsection{L2Inc Method}

Now, we present the ``L2Inc'' method. Unlike the ``L2Dec'' method, we want $K_{n-1}\subset K_n$ for $n$ in $\{0,\hdots, N\}$, $N$ in $\mathbb{N}^*$.
For each step $n$ in $\{0,\hdots, N\}$, we set $K_n$ as the union of $K_{n-1}$ and the thresholding of $(1-\mathbf{1}_{K_{n-1}})(f-\delta t \alpha\Delta f-u^n)^2$. Therefore, we have $\sharp K_n \geq \sharp K_{n-1}$. The term $(1-\mathbf{1}_{K_{n-1}})$ ensure us to not add to $K_n$ pixels that are already in $K_{n-1}$. Then, we compute $u^{n+1}$, solution of Problem \ref{pb:problem_1_time_implicit} with $u^n$, $f$ and $K_n$.
Again, we have to chose the initial condition $u_0$. We propose and compare two choices for $u_0$. When $u_0 = 0$, $K_0$ corresponds to the stationary case, Section \ref{sec:problem_1:topo_grad}. When $u_0 = f$, $K_0$ is the ``H1'' mask. Figure \ref{fig:appendix:L2Inc} is results for the two cases when $N=40$, $\delta t = 0.1$, $\alpha=20$ and $c=0.1$.

\begin{figure}[H]
	\centering
	\subfloat[Original image]{
		\includegraphics[height=3.5cm]{resources/images/Lenna.png}
	}
	\qquad
	\subfloat[Mask when $u_0=0$.]{
		\includegraphics[height=3.5cm]{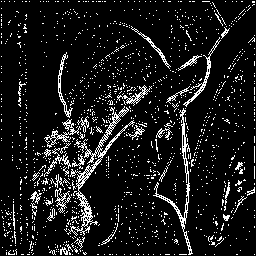}
	}
	\qquad
	\subfloat[Reconstruction $u$ when $u_0=0$. $\|f-u\|_{L^2(D)}=14.48$.]{
		\includegraphics[height=3.5cm]{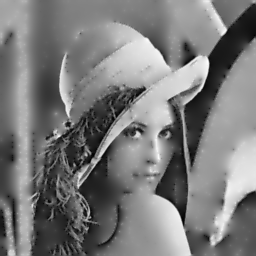}
	}
	\qquad
	\subfloat[Mask when $u_0=f$.]{
		\includegraphics[height=3.5cm]{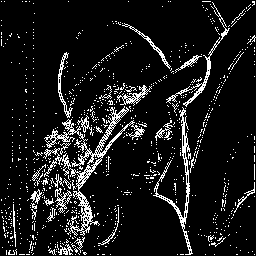}
	}
	\qquad
	\subfloat[Reconstruction $u$ when $u_0=f$. $\|f-u\|_{L^2(D)}=15.89$.]{
		\includegraphics[height=3.5cm]{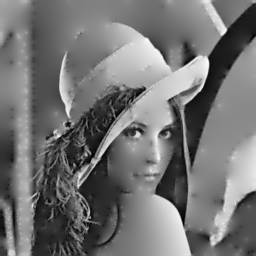}
	}
	
	\caption{Initial condition comparison for ``L2Inc'' method.}
	\label{fig:appendix:L2Inc}
\end{figure}

\subsection{L2Insta Method}

Finally, we propose a last time-dependent method : the ``L2Insta'' method. Here, we do not impose sets to be included into others, but we still want $\sharp K_n = \sharp K_{n-1}$.
For each step $n$ in $\{0,\hdots, N\}$, $K_n$ is the halftoning of $(f-\delta t \alpha\Delta f-u^n)^2$. Then, we compute $u^{n+1}$, solution of Problem \ref{pb:problem_1_time_implicit} with $u^n$, $f$ and $K_n$.
Again, we have to chose the initial condition $u_0$. We propose and compare two choices for $u_0$. When $u_0 = 0$, $K_0$ corresponds to the stationary case, Section \ref{sec:problem_1:topo_grad}. When $u_0 = f$, $K_0$ is the ``H1'' mask. Figure \ref{fig:appendix:l2insta} is results for the two cases when $N=10$, $\delta t = 0.1$, $\alpha=10$ and $c=0.1$.

\begin{figure}[H]
	\centering
	\subfloat[Original image]{
		\includegraphics[height=3.5cm]{resources/images/Lenna.png}
	}
	\qquad
	\subfloat[Mask when $u_0=0$.]{
		\includegraphics[height=3.5cm]{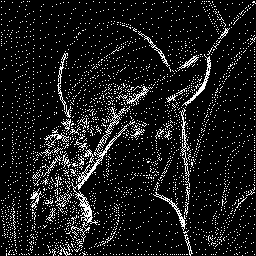}
	}
	\qquad
	\subfloat[Reconstruction when $u_0=0$. $\|f-u\|_{L^2(D)}=11.27$.]{
		\includegraphics[height=3.5cm]{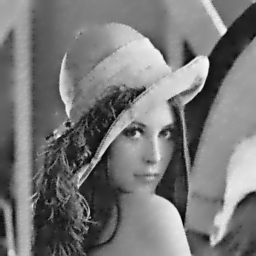}
	}
	\qquad
	\subfloat[Mask when $u_0=f$.]{
		\includegraphics[height=3.5cm]{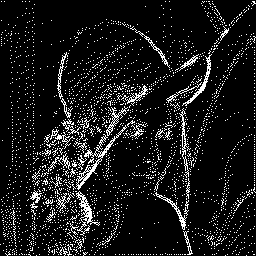}
	}
	\qquad
	\subfloat[Reconstruction when $u_0=f$. $\|f-u\|_{L^2(D)}=12.62$.]{
		\includegraphics[height=3.5cm]{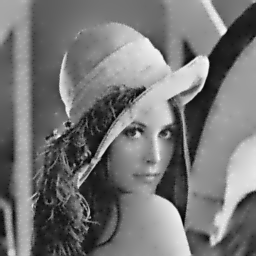}
	}
	
	\caption{Initial condition comparison for ``L2Insta'' method.}
	\label{fig:appendix:l2insta}
\end{figure}

For our three methods, the choice $u_0 = 0$ seems to be the best one. That is why we will use this initial condition for our experiments.

\end{document}